\documentclass[11pt,a4paper, reqno]{amsart}
\usepackage{amsmath,amsfonts,amssymb,mathrsfs}
\usepackage{mathtools}
\usepackage{graphicx}
\usepackage{comment}
\usepackage{xfrac}
\usepackage{subfigure}
\usepackage{bbm}
\usepackage{epstopdf}
\usepackage{pstricks}
\usepackage{pst-node}
\usepackage[linktocpage=true,colorlinks=true,linkcolor=blue,citecolor=red,urlcolor=magenta,pdfborder={0 0 0}]{hyperref}
\usepackage{fancyhdr}
\usepackage{mathscinet}
\usepackage{dutchcal}
\usepackage{xcolor}
\bibliographystyle{siam}
\usepackage[english]{babel}
\usepackage{mathscinet}

\usepackage{chngcntr}

\usepackage{appendix}

\definecolor{forestgreen}{rgb}{0.13, 0.55, 0.13}
\definecolor{anna}{rgb}{0.01, 0.28, 1.0}

%ENVIRONMENT DEFINITION
\newtheorem{theorem}{\bf Theorem}[section]
\newtheorem{lemma}[theorem]{\bf Lemma}
\newtheorem{corollary}[theorem]{\bf Corollary}
\newtheorem{definition}[theorem]{\bf Definition}
\newtheorem{remark}[theorem]{\bf Remark}
\newtheorem{proposition}[theorem]{\bf Proposition}

%SPECIAL LETTERS DEFINITIONS

\newcommand{\R}{\mathbb{R}}

\newcommand{\N}{\mathbb{N}}

\newcommand{\OO}{\mathbb{O}}
\newcommand{\I}{\mathbb{I}}

\newcommand{\W}{\mathcal{W}}

\def \Z {{\mathbb{Z}}}

\def \L {\mathscr{L}}
\def \K {\mathscr{K}}

\def \o {{\omega}}
\def \a {{\alpha}}
\def \b {{\beta}}
\def \cc {{\chi}}
\def \g {{\gamma}}
\def \d {{\delta}}
\def \e {{\varepsilon}}

\def \epsilon {{\varepsilon}}

\def \k {{\kappa}}
\def \l {{\lambda}}
\def \r {{\rho}}

\def \t {{\tau}}
\def \m {{\mu}}

\def \x {{\xi}}

\def \z {{\zeta}}

\def \phi {{\varphi}}

\def \G {{\Gamma}}
\def \O {{\Omega}}
\def \D {{\Delta}}

%SPECIAL ITEMS DEFINITIONS
\def \div {{\text{\rm div}}}
\def \loc {{\text{\rm loc}}}
\def \trace {{\text{\rm tr}}}

\def \diag {{\text{\rm diag}}}
\def \meas {{\text{\rm meas}}}
\def\p{\partial}
\def \tilde {\widetilde}

\def \Q {{\mathcal{Q}}}

\def \D {{\mathcal{D}}}

%PAIRED DELIMITERS
\usepackage{mathtools}

\textwidth = 16cm 
\textheight = 21.5cm 
\hoffset=-1cm
\voffset=-.5cm

\begin{document}
	\title{A note on the weak regularity theory for degenerate Kolmogorov equations}
			%with rough coefficients and non-zero source term}
		%Harnack inequality and H\"older continuity for weak solutions to 
	%the Kolmogorov equation with measurable coefficients and $L^q$ lower 
	%order terms}
	
	\author{Francesca Anceschi}
	\address{Dipartimento di Matematica e Applicazioni
		"Renato Caccioppoli" -
		Università degli Studi di Napoli "Federico II": 
		Via Cintia, Monte S. Angelo
		I-80126 Napoli, Italy}
	\email{francesca.anceschi@unina.it}
	
	\author{Annalaura Rebucci}
	\address{Dipartimento di Scienze Matematiche, Fisiche e Informatiche - Università degli Studi di Parma: Parco Area delle Scienze, 7/A 43124 Parma, Italy}
	\email{annalaura.rebucci@unipr.it}
	
	\date{\today}

	\begin{abstract}
		\noindent
		The aim of this work is to prove a Harnack inequality and the H\"older continuity 
		for weak solutions to the Kolmogorov equation $\L u = f$ with measurable 
		coefficients, integrable lower order terms and nonzero source term. We introduce a function space $\W$, suitable for the study 
		of weak solutions to $\L u = f$, that allows us to prove a weak Poincaré inequality. 
		More precisely, our goal is to prove a weak Harnack inequality
		for non-negative super-solutions by considering their Log-transform and 
		following S. N. Kr\u uzkov (1963). Then this functional inequality is combined with a 
		classical covering argument (Ink-Spots Theorem) that we extend for the first time to the case of 
		ultraparabolic equations.
		
		\medskip 
		\noindent
		{\bf Key words: 
		Kolmogorov equation, weak regularity theory, Moser iterative method, 
		weak Poincaré inequality, Harnack inequality, H\"older regularity,
		ultraparabolic, ink-spots theorem}	
		
		\medskip
		\noindent	
		{\bf AMS subject classifications: 35K70,
			35Q84, 35H20, 35B65, 35B09, 35B45}
		
	\end{abstract}
	
	\maketitle

	\hypersetup{bookmarksdepth=2}
	\setcounter{tocdepth}{1}
	\tableofcontents
	\setcounter{equation}{0}\setcounter{theorem}{0}
		\section{Introduction}
	The aim of this work is to study the De Giorgi-Nash-Moser regularity theory for weak solutions to the second order partial differential equation of Kolmogorov type of the form
\begin{equation}\label{defL}
\begin{split}
\L u (x,t):&=\sum_{i,j=1}^{m_0}\partial_{x_i}\left(a_{ij}(x,t)\partial_{x_j}u(x,t)\right)+\sum_{i,j=1}^N b_{ij}x_j\partial_{x_i}u(x,t)-\partial_t u(x,t)\\
&+\sum_{i=1}^{m_0}b_i(x,t)\partial_i u(x,t)+c(x,t)u(x,t)=f,
\end{split}
\end{equation}
where $z=(x,t)=(x_1,\ldots,x_N,t)\in \R^{N+1}$ and $1 \leq m_0 \leq N$. Moreover, the matrices $A_0=(a_{ij}(x,t))_{i,j=1,\ldots,m_0}$ and $B=(b_{ij})_{i,j=1,\ldots,N}$ satisfy the following structural assumptions.

\medskip

\begin{itemize}
\item[\textbf{(H1)}] The matrix $A_0$ is symmetric with real measurable entries. Moreover, $a_{ij}(x,t)=a_{ji}(x,t)$, for every $i,j=1,\ldots,m_0$, and there exist two positive constants $\lambda$ and $\Lambda$ such that 
\begin{eqnarray}\label{hypcost}
\lambda |\xi|^2 \leq \sum_{i,j=1}^{m_0}a_{ij}(x,t)\xi_i\xi_j \leq \Lambda|\xi|^2
\end{eqnarray}
for every $(x,t) \in \R^{N+1}$ and $\xi \in \R^{m_0}$. The matrix B has constant entries.
\end{itemize}
\medskip
Note that we allow the operator $\L$ to be strongly degenerate whenever $m_0 < N$. However, the first order part of $\L$ may induce a strong regularizing property. Indeed, it is known that, under suitable assumptions on the matrix $B$, the operator $\L$ is hypoelliptic, namely that every distributional solution $u$ to $\L u = f$ defined in some open set $\Omega \subset \R^{N+1}$ belongs to $C^\infty(\Omega)$ and it is a classical solution to $\L u = f$, whenever $f \in C^\infty(\Omega)$.
 In the sequel, we will therefore rely on the following assumption.

\medskip

\begin{itemize}
\item[\textbf{(H2)}] The \textit{principal part operator $\K$ of $\L$} is hypoelliptic
and homogeneous of degree $2$ with respect to the family of dilations $\left( \d_{r} \right)_{r>0}$ introduced in \eqref{gdil}, where $\K$ is defined as
\begin{eqnarray}\label{defK}
	\K u(x,t):=\sum_{i=1}^{m_0}\partial^2_{x_i} u(x,t) + \sum_{i,j=1}^N b_{ij}x_j\partial_{x_i}u(x,t)-\partial_t u(x,t),
	\qquad (x,t) \in \R^{N+1}.
\end{eqnarray}
\end{itemize}

\noindent
 It is clear that if $\L$ is uniformly parabolic (i.e. $m_0=N$ and $B\equiv \mathbb{O}$), then assumption $\textbf{(H2)}$ is satisfied. In fact, in this case the principal part operator $\K$ is simply the heat operator, which is known to be hypoelliptic. For further information on the hypoellipticity of $\K$ and on the equivalent structural condition for the matrix B we refer to Section \ref{preliminaries}.

\medskip

In the sequel we will also make use of the following notation in order to introduce a compact formulation for the operator  $\L$.
More precisely, here and in the sequel
\begin{eqnarray*}
	D=(\partial_{x_1},\ldots,\partial_{x_N}),\quad \langle\cdot,\cdot \rangle, \quad \div
\end{eqnarray*}
respectively denote the gradient, the inner product and the divergence in $\R^N$. In addition, 
\begin{equation*}
	D_{m_0}=(\partial_{x_1},\ldots,\partial_{x_{m_0}}), \quad \div_{m_0}
\end{equation*}
denote the partial gradient and the partial divergence in the first $m_0$ components, respectively. Moreover, we introduce the matrix 
\begin{eqnarray*}
A(x,t)=\left(a_{ij}(x,t)\right)_{1\leq i,j \leq N},
\end{eqnarray*}
where $a_{ij}$, for every $i,j=1,\ldots,m_0$, are the coefficients appearing in \eqref{defL}, while $a_{ij}\equiv 0$ whenever $i > m_0$ or $j>m_0$, and we let
\begin{align}  \label{drift}
%a(x,t)= &\left(a_1(x,t),\ldots,a_{m_0}(x,t),0,\ldots,0\right), \quad 
b(x,t):=\left(b_1(x,t),\ldots,b_{m_0}(x,t),0,\ldots,0\right),
 \qquad Y:=\sum_{i,j=1}^N b_{ij}x_j\partial_{x_i}u(x,t)-\partial_t u(x,t).
\end{align}
Now, we are in a position to rewrite the operator $\L$ in the following compact form
\begin{eqnarray*}
\L u=\div(A D u) + Yu + \langle b, D u\rangle+ cu
\end{eqnarray*}
and to state our last assumption on the integrability of $b$, $c$ and of the source term $f$.

\medskip

\begin{itemize}
\item[\textbf{(H3)}] $c, f \in L^q(\O)$ and $b \in \left( L^q (\O) \right)^{m_0}$
	for some $q >\frac{3}{4}\left( Q+2\right)$. Moreover, we assume
	\begin{equation*}
		\div b \ge 0  \qquad \text{in} \, \, \O.
	\end{equation*}
\end{itemize}
\noindent
The physical interpretation of the sign of the divergence of $b$ can be understood by considering the Vlasov-Poisson-Fokker-Planck equation \cite{vlasov}, for which the lower order term $b$ represents the electrostatic or gravitational forces.  The equations with the term $b$ satisfying the structural assumption $\div b \geq 0$ arise also in some other applications, like the ones contained in \cite{S-div, div}. Moreover, the sign assumption on the divergence of $b$ is also quite relevant in the case of parabolic equations, since it has several applications to, for instance, incompressible flows and magnetostrophic turbulence models for the Earth’s fluid core, e.g. \cite{Moffatt}. In particular, it is nowadays known that the sign (or either the divergence free, i.e. $\div b=0$) assumption can be considered to relax the regularity assumptions on $b$ under which one can prove the Harnack inequality and other results, see \cite{S-div, Ignatova}. Nevertheless, in our case as in the parabolic setting presented in \cite{S-div}, there is still the need to require that the divergence of $b$ exists in the sense of distributions and that $b$ is at least locally integrable up to a certain power.

\subsection{Main results}
Our aim is to prove the local boundedness and the local H\"older continuity, alongside with a Harnack inequality, for weak solutions to $\L u = f$ under the assumptions \textbf{(H1)-(H3)}.
In particular, in order to  expose our main results we first need to introduce some preliminary notation. From now on, we consider a set $\O = \O_{m_{0}} \times \O_{N - m_{0} + 1}$ of $\R^{N+1}$, where
$\O_{m_{0}}$ is a bounded Lipschitz domain of $\R^{m_{0}}$ and 
$\O_{N-m_{0} + 1}$ is a bounded Lipschitz domain of $\R^{N - m_{0} + 1}$. 
This is not restrictive since the cylinders $\Q$ introduced in \eqref{rcylind} that we consider 
in our local analysis satisfy the Lipschitz boundary assumption.  
Then we split the coordinate $x\in\R^N$ as
\begin{equation}\label{split.coord.RN}
	x=\big(x^{(0)},x^{(1)},\ldots, x^{(\kappa)}\big), \qquad x^{(0)}\!\in\R^{m_0}, \quad x^{(j)}\!\in\R^{m_j}, \quad j\in\{1,\ldots,\kappa\},
\end{equation}
where we have that  in accordance with the scaling of the differential equation (see \eqref{gdil} below)
every $m_{j}$ is a positive integer such that
\begin{equation*}
		\sum \limits_{j=1}^{\kappa} m_j = N 
		\qquad \text{and} \qquad N \ge m_0 \ge m_1
		\ge \ldots \ge m_\k \ge 1  .
\end{equation*}
We denote by $\D(\O)$ the set of $C^\infty$ functions compactly supported in $\O$ and by $\D'(\O)$ the set of distributions in $\O$. From now on, $H^{1}_{x^{(0)}}$ denotes the Sobolev space of functions $u \in  L^{2} (\O_{m_{0}})$ with 
distribution gradient $D_{m_0}u$ lying in $( L^{2} (\O_{m_{0}}) )^{m_{0}}$, i.e. 
\begin{equation*}
	H^{1}_{x^{(0)}} := \left\{ u \in L^{2} (\O_{m_{0}}) : \, D_{m_{0}} u \in ( L^{2} (\O_{m_{0}}) )^{m_{0}}
	\right\},
\end{equation*}
and we set 
\begin{equation*}
	\| u \|_{H^{1}_{x^{(0)}}} := \| u \|_{L^{2} (\O_{m_{0}}) } + \| D_{m_{0} } u \|_{L^{2} (\O_{m_{0}})}.
\end{equation*}
We let $H^{1}_{c,x^{(0)}}$ denote the closure of $C^\infty_{c}(\O_{m_{0}})$ in the norm of $H^{1}_{x^{(0)}}$ and 
we recall that $C^\infty_{c}(\overline \O_{m_{0}})$ is dense in $H^{1}_{x^{(0)}}$ since $\O_{m_{0}}$ is a bounded Lipschitz domain by assumption. Moreover, $H^{1}_{c,x^{(0)}}$ is a reflexive Hilbert space and thus we may consider its dual space 
$$
\left( H^{1}_{c,x^{(0)}} \right)^{*} = H^{-1}_{x^{(0)}} \quad \text{and} \quad 
\left( H^{-1}_{x^{(0)}} \right)^{*} = H^{1}_{c,x^{(0)}},
$$
where the notation we consider is the classical one. Hence, from now on we denote by $H^{-1}_{x^{(0)}}$ the dual of 
$H^{1}_{c,x^{(0)}}$ acting on functions in $H^{1}_{c,x^{(0)}}$ through the duality pairing 
$\langle \cdot , \cdot \rangle := \langle \cdot , \cdot \rangle_{H^{1}_{x^{(0)}},H^{1}_{c,x^{(0)}}} $.
In a standard manner, see for instance \cite{AM, LN}, we let $\W$ denote the closure of $C^{\infty} (\overline \O)$ in the 
norm 
\begin{equation}
	\label{normW}
	\| u \|^2_{\W} = \| u \|^2_{L^2(\O_{N-m_{0} + 1};H^1_{x^{(0)}})} + \| Y u \|^2_{L^2(\O_{N-m_{0} + 1};H^{-1}_{x^{(0)}})},
\end{equation}
where the previous norm can explicitly computed as follows:
\begin{align*}
	\| u \|^2_{\W} = \left( \int_{\O_{N-m_{0} + 1}} \| u(\cdot, y,t) \|_{H^1_{x^{(0)}}}^{2} dy \, dt \right)^{\frac12} + 
				 \left( \int_{\O_{N-m_{0} + 1}}  \| Y u (\cdot, y,t) \|^2_{H^{-1}_{x^{(0)}}} dy \, dt \right)^{\frac12},
\end{align*}
where $y = (x^{(1)},\ldots, x^{(\kappa)})$. In particular, $\W$ is a Banach space and we remark that the dual of
$L^{2}(\O_{N-m_{0}+1}; H^{1}_{c,x^{(0)}})$ satisfies 
\begin{align*}
	&\left( L^{2}(\O_{N-m_{0}+1}; H^{1}_{c,x^{(0)}}) \right)^{*} =  L^{2}(\O_{N-m_{0}+1}; H^{-1}_{c,x^{(0)}}) 
	\quad \text{and} \quad  \\
	&\qquad \qquad  \qquad \qquad \left( L^{2}(\O_{N-m_{0}+1}; H^{-1}_{c,x^{(0)}})   \right)^{*} = L^{2}(\O_{N-m_{0}+1}; H^{1}_{c,x^{(0)}}) .
\end{align*}
From now on, we consider the shorthand notation $L^{2}H^{-1}$ to denote $L^{2}(\O_{N-m_{0}+1}; H^{-1}_{c,x^{(0)}})$.

\medskip

The space of functions $\W$ is the most natural framework for the study of the weak regularity theory for the operator $\L$. In particular, it is an
extension of the functional setting proposed by Armstrong and Mourrat in \cite{AM} for the study of the kinetic Kolmogorov-Fokker-Planck equation, where the authors show that it is sufficient to control $u \in L^2$ and $Y u \in L^2H^{-1}$ in order to derive new Poincaré inequalities, such as Proposition \ref{weak-poincare}, on which we base our analysis later on.
Moreover, we refer to \cite[Section 2]{LN} for some properties of the space $\W$. Lastly, we remark that 
the major issue when dealing with the space $\W$ is that it requires to handle 
the duality pairing between $L^{2}H^{1}$ and $L^{2}H^{-1}$. To this end, we take advantage of 
the following remark, see \cite[Chapter 4]{SK}. 
\begin{remark}
\label{remark-Y}
For every open subset $A \subset \R^{n}$ and for every 
function $g \in H^{-1}(A )$ there exist two functions
$H_0$, $H_1 \in L^2(A)$ such that 
\begin{equation*}
	g = \div_{m_{0}} H_{1} + H_{0} \qquad \text{and} \qquad 
	\Vert H_0 \Vert_{L^2(A)}+\Vert H_1 \Vert_{L^2(A)} \leq 
	2\Vert g \Vert_{H^{-1}(A)}.
\end{equation*}
\end{remark}

Now, we introduce the definition of weak solution we consider in our work. 
\begin{definition}\label{weak-sol2}
A function $u \in \W$ is a weak solution to \eqref{defL} with source term $f \in L^2(\O)$ if for every non-negative test function $\phi \in \D(\O)$, we have
\begin{align}\label{kolmo}
	   \int_{\O} - \langle A Du, D\phi \rangle + \phi Y u + \langle b , Du \rangle \phi + c u \phi= \int_{\O} f \phi.
	\end{align}
In the sequel, we will also consider weak sub-solutions to \eqref{defL}, namely functions $u \in \W$ that satisfy the following inequality
%such that $u$, $D_{m_0} u \in L^2_{loc}(\O)$, $Y u \in L^2_{x^{(1)},\ldots,x^{(\kappa)},t} H^{-1}_{x^{(0)}}(\O)$ and 
\begin{align}\label{kolmo-sub}
	   \int_{\O} - \langle A Du, D\phi \rangle + \phi Y u + \langle b , Du \rangle \phi + c u \phi \geq \int_{\O} f \phi,
	\end{align}
	for every non-negative test function $\phi \in \D(\O)$. A function $u$ is a super-solution to \eqref{defL} if $-u$ is a sub-solution.
\end{definition}
This framework is quite classical for the study of the weak regularity theory for the kinetic Kolmogorov-Fokker-Planck equation \cite{AM, GIMV, GI, GM}, that can be recovered from \eqref{defL} by choosing $N=2d$, $\k = 1$, $m_0 = m_1= d$ and $c\equiv 0$. Still, even if it is the most natural framework for \eqref{defL}, to the best of our knowledge it has never been considered in literature yet. Indeed, the weak regularity theory for the operator 
$\L$ has been widely developed during the last decade starting from the paper \cite{PP} by Pascucci and Polidoro, where the authors worked under the stronger assumption
$Y u \in L^2_{\loc}(\O)$. In the same framework (i.e. $Y u \in L^2(\O)$), Wang and Zang \cite{WZ4, WZ3} lately proved the results for equation \eqref{defL} with $a=b=f=0$ on the local H\"older continuity for solutions. 
%For a characterization of the space $\W$ we refer to Section \ref{preliminaries}.

\medskip 

The aim of this paper is to prove the local H\"older continuity and a Harnack inequality for solutions to \eqref{defL} in the sense of Definition \ref{weak-sol2}. Our method is based on the combination of three fundamental ingredients - boundedness of weak solutions, weak Poincaré inequality and Log-transformation - in the same spirit of the recent paper \cite{GI} for the Fokker-Planck equation. In particular, we give an answer to Remark $4$, p. 2 of \cite{GI}
and we are also able to simplify the proof proposed in \cite{WZ-preprint} to obtain the local H\"older continuity for weak solutions to \eqref{defL}. 
We classically reduce the local study to the case where $\Q^0$ is at unit scale and for some reasons we expose below in Section \ref{harnack}, $\Q^0$
takes the form $B_{R_0}\times B_{R_0}\times \ldots \times B_{R_0} \times(-1,0] $
for some large constant $R_0$ only
depending on the dimension $Q$ and the ellipticity constants $\lambda, \Lambda$ in \textbf{(H1)}.

As we will see in Section \ref{preliminaries}, the suitable geometry when dealing with operator $\L$ is given by an homogeneous Lie group structure defined on $\mathbb{R}^{N+1}$. Our results naturally reflect this non-Euclidean setting. Let $"\circ"$ denote the composition law introduced in \eqref{grouplaw} and $\left(\delta_r\right)_{r>0}$ the family of dilations defined in \eqref{gdil}. We consider the unit past cylinder
\begin{eqnarray}\label{unitcylind}
\Q_1 := B_1 \times B_1 \times \ldots \times B_1 \times (-1,0),
\end{eqnarray}
defined through the open balls 
\begin{eqnarray}\label{openball}
B_1 = \lbrace x^{(j)}\!\in\R^{m_j} : \vert x \vert \leq 1 \rbrace,
\end{eqnarray}
where $j=0,\ldots,\kappa$ and $\vert \cdot \vert$ denotes the euclidean norm in $\R^{m_j}$. Now, for every $z_0 \in \mathbb{R}^{N+1}$ and $r>0$, we set
\begin{eqnarray}\label{rcylind}
\Q_r(z_0):=z_0\circ\left(\delta_r\left(\Q_1\right)\right)=\lbrace z \in \mathbb{R}^{N+1} \,:\, z=z_0\circ \delta_r(\zeta), \zeta \in \Q_1 \rbrace.
\end{eqnarray}
We are now in a position to state our main results.

\begin{theorem}[Weak Harnack inequality]
	\label{weak-harnack}
Let $\Q^0=B_{R_0}\times B_{R_0}\times \ldots \times B_{R_0} \times(-1,0] $ and let $u$ be a non-negative weak super-solution to $\L u = f$ in $\O\supset \Q^0$ under the 
	assumptions \textbf{(H1)}-\textbf{(H3)}. Then we have
\begin{eqnarray}
\left(\int_{\Q_-}u^p\right)^{\frac{1}{p}}\leq C\left(\inf_{\Q_+} u + \Vert f \Vert_{L^q(\Q^0)}\right),
\end{eqnarray}
where $\Q_+=B_{\omega}\times B_{\omega^3}\times\ldots \times B_{\omega^{2\kappa+1}}\times (-\omega^2,0]$ and $\Q_-=B_{\omega}\times B_{\omega^3}\times\ldots \times B_{\omega^{2\kappa+1}}\times (-1,-1+\omega^2]$. Moreover, the constants $C$, $p$, $\omega$ and $R_0$ only depend on the homogeneous dimension $Q$ defined in \eqref{hom-dim}, $q$ and on the ellipticity constants $\lambda$ and $\Lambda$ in \eqref{hypcost}. 
\end{theorem}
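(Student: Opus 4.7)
The plan is to follow the Moser--Kru\v{z}kov strategy: combine an $L^p$-to-$L^\infty$ estimate for sub-solutions with a logarithmic estimate for positive super-solutions, then bridge the resulting one-sided bounds by an expansion-of-positivity argument implemented through the ultraparabolic Ink-Spots theorem advertised in the abstract. First I would normalize the problem by replacing $u$ with $\widetilde u := u + \|f\|_{L^q(\Q^0)}$, which absorbs the source term into a modified zeroth-order coefficient and turns $u$ into a strictly positive super-solution of a homogeneous equation still satisfying \textbf{(H1)}--\textbf{(H3)}. It is then enough to establish $\|\widetilde u\|_{L^p(\Q_-)} \leq C \inf_{\Q_+} \widetilde u$, and the large parameter $R_0$ is dictated only at the end, as the size needed for $\Q^0$ to contain the full chain of sub-cylinders that the covering step manipulates.

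Next, for the positive super-solution $\widetilde u$, I would test the weak inequality of Definition \ref{weak-sol2} against $\varphi^2/\widetilde u$ with a cutoff $\varphi$ supported in a sub-cylinder of $\Q^0$. Standard manipulations, exploiting $\div b \geq 0$ and Cauchy--Schwarz, yield a Caccioppoli-type estimate for $w := -\log \widetilde u$ giving control of $D_{m_0} w$ in $L^2$ and of $Yw$ in $L^2 H^{-1}$, so that locally $w \in \W$. The lower-order terms involving $b$, $c$, $f$ are absorbable via H\"older's inequality together with the Sobolev embedding associated to the homogeneous dimension $Q$, and this is precisely where the threshold $q > \tfrac{3}{4}(Q+2)$ in \textbf{(H3)} enters. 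Plugging this information into the weak Poincar\'e inequality available in $\W$ (Proposition \ref{weak-poincare}) produces an $L^1$-oscillation bound for $w$ on sub-cylinders, which in turn gives a distribution-function decay $|\{w > \ell + s\} \cap \Q'| \lesssim s^{-1} |\Q'|$ for a threshold $\ell$ tied to $\inf_{\Q_+} \widetilde u$.

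Third, I would upgrade this distributional estimate on $w$ to genuine $L^p$-integrability of $\widetilde u^{-1}$ on $\Q_-$ through an expansion-of-positivity mechanism. The local building block is: if $\widetilde u \geq m$ on a large-measure portion of a cylinder $\Q_r(z_0)$, then $\widetilde u \geq \theta m$ on a cylinder chained forward in time through the flow of $Y$, with $\theta \in (0,1)$ independent of $m$; this follows by applying the log-estimate above to $\widetilde u / m$. Iterating this propagation across a chain of cylinders covering $\Q_+$ from $\Q_-$ is exactly the content of the ultraparabolic Ink-Spots theorem. Once in place, it converts the measure-decay into $\|\widetilde u^{-1}\|_{L^p(\Q_-)} \leq C / \inf_{\Q_+} \widetilde u$ for some small $p > 0$, which, after undoing the normalization $\widetilde u = u + \|f\|_{L^q(\Q^0)}$, is the stated weak Harnack inequality.

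The hard part will be the Ink-Spots step in the non-commutative, anisotropic Kolmogorov geometry attached to the group law $\circ$ and the dilations $\delta_r$. In the parabolic case cylinders can be freely translated and stacked, but here vertices must be composed through $\circ$ and the forward-time chaining interacts non-trivially with the drift $Y$ associated to $B$; this is what forces the specific anisotropic shape of $\Q_-$ and $\Q_+$, with successive radii $\omega, \omega^3, \ldots, \omega^{2\kappa+1}$ matching the homogeneity of $\delta_r$, and the large value of $R_0$. Once $R_0$ and $\omega$ are tuned so that every point of $\Q_+$ is reachable from $\Q_-$ by a finite chain of admissible cylinders entirely contained in $\Q^0$, the classical iteration closes and all constants depend only on $Q$, $q$, $\lambda$, $\Lambda$, as claimed.
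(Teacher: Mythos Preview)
Your proposal follows essentially the same route as the paper: reduce away the source term, apply the log-transform together with the weak Poincar\'e inequality and the $L^2$--$L^\infty$ bound for sub-solutions to obtain the expansion-of-positivity building block (this is Lemma~5.1 in the paper), then iterate it through stacked cylinders and the Ink-Spots covering to get geometric decay of the super-level sets $\{u>M^k\}$, and conclude by the layer-cake formula. Two small slips to correct: the constant shift $\widetilde u=u+\|f\|_{L^q}$ does not literally make the equation homogeneous as you claim (the paper uses the time-dependent shift $u+\vartheta t\,\|f\|_{L^q}$ for this purpose, or equivalently carries an $\epsilon$-regularization into the log step), and your final inequality should read $\|\widetilde u\|_{L^p(\Q_-)}\le C\inf_{\Q_+}\widetilde u$ rather than the version with $\widetilde u^{-1}$, since it is the \emph{super}-level sets of $u$ on $\Q_-$ that the Ink-Spots argument controls.
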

We remark that as in \cite{PP}, the radius $\o$ is small enough so that when “stacking cylinders”
over a small initial one contained in $\Q^-$, the cylinder $\Q^+$ is captured, see Lemma \ref{stackedcylinders}. As far as we are concerned with $R_0$, it is large enough so that it is possible to apply the expansion of positivity lemma (see Lemma \ref{expand-pos}) to every stacked cylinder. By combining the local boundedness of positive weak sub-solutions proved in Theorem \ref{boundedness} and Theorem \ref{weak-harnack} we obtain the following Harnack inequality, 
which is an extension of the analogous result contained in \cite{AEP, GIMV}.
\begin{theorem}[Harnack inequality]
	\label{harnack-thm}
	Let $\Q^0=B_{R_0}\times B_{R_0}\times \ldots \times B_{R_0} \times(-1,0] $ 
	and let $u$ be a non-negative weak solution to $\L u = f$ in 
	$\O\supset \Q^0$ under the assumptions \textbf{(H1)}-\textbf{(H3)}. 
	Then	 we have
	\begin{eqnarray}
		\sup \limits_{\Q_{-}} u \, \le \, C \left( \inf_{\Q_+} u + \Vert f \Vert_{L^q(\Q^{0})} \right),
	\end{eqnarray}
	where $\Q_+=B_{\omega}\times B_{\omega^3}\times\ldots \times B_{\omega^{2\kappa+1}}\times (-\omega^2,0]$
	and $\Q_-=B_{\omega}\times B_{\omega^3}\times\ldots \times B_{\omega^{2\kappa+1}}\times (-1,-1+\omega^2]$. 
	Moreover, the constants $C$, $\omega$ and $R_0$ only depend on the homogeneous dimension $Q$ defined in 
	\eqref{hom-dim}, $q$ and on the ellipticity constants $\lambda$ and $\Lambda$ in \eqref{hypcost}. 
\end{theorem}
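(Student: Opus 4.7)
The proof combines the local boundedness of weak sub-solutions (Theorem \ref{boundedness}, cited just before the statement of Theorem \ref{harnack-thm}) with the weak Harnack inequality of Theorem \ref{weak-harnack}. A non-negative weak solution $u$ is, by Definition \ref{weak-sol2}, simultaneously a non-negative weak sub-solution and a non-negative weak super-solution, so both results apply to the same function.

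The plan is first to enlarge the reference geometry. I would introduce an auxiliary pair $(\widetilde{\Q}_-, \widetilde{\Q}_+)$ obtained from $(\Q_-, \Q_+)$ by an anisotropic scaling consistent with the family $\left( \delta_{r} \right)_{r>0}$, arranged so that $\Q_- \subset \widetilde{\Q}_- \subset \Q^0$, $\Q_+ \subset \widetilde{\Q}_+ \subset \Q^0$, and $(\widetilde{\Q}_-, \widetilde{\Q}_+)$ still has the same relative stacked configuration as $(\Q_-, \Q_+)$. This is achieved by slightly enlarging $R_0$ at the outset, which is harmless because $R_0$ is allowed to depend on $Q, q, \lambda, \Lambda$. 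On the enlarged pair, Theorem \ref{weak-harnack} applied to the super-solution $u$, after a rescaling that exploits the $\delta_r$-covariance of $\L$ coming from \textbf{(H2)}, yields
\begin{equation*}
    \|u\|_{L^p(\widetilde{\Q}_-)} \le C \left( \inf_{\widetilde{\Q}_+} u + \|f\|_{L^q(\Q^0)} \right)
    \le C \left( \inf_{\Q_+} u + \|f\|_{L^q(\Q^0)} \right),
\end{equation*}
where the last inequality uses $\Q_+ \subset \widetilde{\Q}_+$, hence $\inf_{\widetilde{\Q}_+} u \le \inf_{\Q_+} u$.

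Second, I would use the local boundedness Theorem \ref{boundedness} for non-negative sub-solutions, with inner cylinder $\Q_-$ and outer cylinder $\widetilde{\Q}_-$, to obtain
\begin{equation*}
    \sup_{\Q_-} u \le C \|u\|_{L^p(\widetilde{\Q}_-)} + C \|f\|_{L^q(\Q^0)}.
\end{equation*}
If Theorem \ref{boundedness} is only formulated on a single cylinder, the same estimate follows from a standard covering argument: for every $z_0 \in \Q_-$, pick a small $\Q_\rho(z_0) \subset \widetilde{\Q}_-$ with $\rho$ uniformly bounded from below in $z_0$, apply Theorem \ref{boundedness} pointwise, and take the supremum. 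Chaining the two displayed inequalities gives the desired Harnack estimate.

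The main technical point I anticipate is the bookkeeping of cylinders and constants: one must check that the rescaling of Theorem \ref{weak-harnack} from $(\Q_-, \Q_+)$ to $(\widetilde{\Q}_-, \widetilde{\Q}_+)$ produces exponents and constants depending only on the data $Q, q, \lambda, \Lambda$. This is automatic once one observes that the assumptions \textbf{(H1)}--\textbf{(H3)} transform in a controlled way under the group law $\circ$ and the dilations $\delta_r$, so that the rescaled $u$ solves an equation of the same form with the same structural constants. No new analytic ingredient is required beyond the results already assembled in Theorems \ref{weak-harnack} and \ref{boundedness}.
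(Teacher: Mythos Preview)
Your proposal is correct and follows exactly the approach of the paper: the proof there is a one-line remark that the full Harnack inequality is a direct consequence of combining the local boundedness of weak sub-solutions (Theorem \ref{boundedness}) with the weak Harnack inequality (Theorem \ref{weak-harnack}). Your write-up simply supplies the geometric bookkeeping (the auxiliary cylinders $\widetilde{\Q}_\pm$ and the rescaling via $\delta_r$) that the paper leaves implicit.
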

Since our proof of Theorem \ref{weak-harnack} is constructive, as it is based on the combination of an expansion of positivity argument with the weak Poincaré inequality of Proposition \ref{weak-poincare}, also the proof of Theorem \ref{harnack-thm} is constructive. 
Moreover, the weak Harnack inequality also implies the H\"older regularity of weak solutions. In order to state this result, we first need to give the following definition.
\begin{definition}
    \label{holdercontinuous}
    Let $\a$ be a positive constant, $\a \le 1$, and let $\O$ be an open subset of $\R^{N+1}$. We 
    say that a function $f : \O \longrightarrow \R$ is H\"older continuous with exponent $\a$ in $\O$
    with respect to the group $\mathbb{K}=(\mathbb{R}^{N+1},\circ)$, defined in \eqref{grouplaw}, (in short: H\"older 
    continuous with exponent $\a$, $f \in C^\a_{K} (\O)$) if there exists a positive constant $C>0$ such that 
    \begin{equation*}
        | f(z) - f(\z) | \le C \; d(z,\zeta)^{\a} \qquad { \rm for \, every \, } z, \z \in \O,
    \end{equation*}
where $d$ is the distance defined in \eqref{def-dist}.

To every bounded function $f \in C^\a_{K} (\O)$ we associate the semi-norm
    	\begin{equation*}
       		[ f ]_{C^{\a} (\O)} = %\hspace{1mm} \sup \limits_\O |f| \hspace{1mm} 
       		\hspace{1mm} 
       		 \sup \limits_{z, \z \in \O \atop  z \ne \z} \frac{|f(z) - f(\z)|}{d(z,\zeta)^{\a}}.
        \end{equation*}
    Moreover, we say a function $f$ is locally H\"older continuous, and we write $f \in C^{\a}_{K,\loc}(\O)$,
    if $f \in C^{\a}_{K}(\O')$ for every compact subset $\O'$ of $\O$.
\end{definition}
We are now in a position to state the following result.

\begin{theorem}[H\"older regularity]
	\label{local-holder}
	There exists $\a \in (0,1)$ only depending on dimension $Q$, $\l$, $\Lambda$ such that all weak solutions 
	$u$ to \eqref{defL} under assumption \textbf{(H1)}- \textbf{(H3)} in $\O \supset \Q_{1}$ satisfy
	\begin{equation*}
		[ u ]_{C^{\a} (Q_{\frac12})} \, \le C \left( \| u \|_{L^{2}(\Q_{1})} + \| f \|_{L^{q}(\Q_{1})} \right),
	\end{equation*}
	where the constant $C$ only depends on the homogeneous dimension $Q$ defined in \eqref{hom-dim}, $q$
	and the ellipticity constants $\l$, $\Lambda$.
\end{theorem}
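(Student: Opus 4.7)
The plan is to deduce H\"older regularity from the Harnack inequality of Theorem \ref{harnack-thm} via the classical oscillation-reduction iteration, carefully adapted to the non-Euclidean dilation and translation structure of the group $\lL$.

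First I would fix an arbitrary point $z_0\in\Q_{1/2}$ and, for $0<r<r_0$ with $r_0$ small enough that $z_0\circ\delta_r(\Q^0)\subset\Q_1$, work on the rescaled cylinders $\widetilde\Q^0_r := z_0\circ\delta_r(\Q^0)$ and $\widetilde\Q^\pm_r := z_0\circ\delta_r(\Q^\pm)$. The change of variables $\z\mapsto z_0\circ\delta_r(\z)$ transforms $u$ into a weak solution $u_r(\z):=u(z_0\circ\delta_r(\z))$ of a Kolmogorov equation of the same type on $\Q^0$, whose coefficients still satisfy \textbf{(H1)}--\textbf{(H3)} with the same ellipticity constants $\l,\Lambda$. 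Under this scaling the $L^q$-norm of the new source term is bounded by $r^{\g}\|f\|_{L^q(\Q_1)}$ with $\g = 2-(Q+2)/q > 0$, positivity being ensured by the hypothesis $q>\tfrac34(Q+2)$ of \textbf{(H3)}.

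Next, set $M_r := \sup_{\widetilde\Q^0_r} u$, $m_r := \inf_{\widetilde\Q^0_r} u$ and $\o(r) := M_r-m_r$. The non-negative functions $v := M_r-u$ and $w := u-m_r$ are weak solutions on $\widetilde\Q^0_r$ of
\begin{equation*}
\L v = -f + c\,M_r, \qquad \L w = f - c\,m_r.
\end{equation*}
Applying Theorem \ref{harnack-thm} to each (after rescaling to $\Q^0$) and adding the two inequalities, one finds
\begin{equation*}
\o(r)\le \sup_{\widetilde\Q^-_r} v + \sup_{\widetilde\Q^-_r} w \le C\bigl(\inf_{\widetilde\Q^+_r} v + \inf_{\widetilde\Q^+_r} w\bigr) + C\,r^{\g}\,F = C\bigl(\o(r)-\mathrm{osc}_{\widetilde\Q^+_r} u\bigr) + C\,r^{\g}\,F,
\end{equation*}
where $F := \|f\|_{L^q(\Q_1)} + 2\,\|u\|_{L^\infty(\Q_1)}\,\|c\|_{L^q(\Q_1)}$. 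Rearranging yields the oscillation-decay estimate
\begin{equation*}
\mathrm{osc}_{\widetilde\Q^+_r} u \le \th\,\o(r) + C\,r^{\g}\,F, \qquad \th := 1 - \tfrac{1}{C} \in (0,1).
\end{equation*}

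Since the specific anisotropic radii of $\Q^\pm$ and $\Q^0$ give $\delta_{\r}(\Q^0) \subset \Q^+$ for $\r := \o/R_0 \in (0,1)$, and hence $z_0\circ\delta_{\r r}(\Q^0) \subset \widetilde\Q^+_r$, the preceding estimate iterates on the shrinking family $\{z_0\circ\delta_{\r^n r_0}(\Q^0)\}_{n\ge 0}$ to give
$\o(\r^n r_0) \le C\,(\r^n r_0)^{\a}\bigl(\|u\|_{L^\infty(\Q_1)} + \|f\|_{L^q(\Q_1)}\bigr)$
for some $\a\in(0,1)$ depending only on $Q,q,\l,\Lambda$, via the classical Campanato iteration lemma (with $\a := \min\{\log\th/\log\r,\g\}$). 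Combining this with the local boundedness $\|u\|_{L^\infty(\Q_1)} \le C\bigl(\|u\|_{L^2(\Q_1)} + \|f\|_{L^q(\Q_1)}\bigr)$ from Theorem \ref{boundedness} and the arbitrariness of $z_0\in\Q_{1/2}$ yields the announced bound on $[u]_{C^{\a}(\Q_{1/2})}$ with respect to the intrinsic distance $d$ of Definition \ref{holdercontinuous}.

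The main obstacle I foresee is the careful bookkeeping of the anisotropic scaling under the non-commutative change of variables $\z\mapsto z_0\circ\delta_r(\z)$: the coefficients $A_0$, $b$ and $c$, as well as the source $f$, acquire different powers of $r$ corresponding to the different homogeneities across the layers $x^{(0)},\ldots,x^{(\k)}$, and one must verify that the overall exponent for the $L^q$-norm of the transformed source is strictly positive, which is precisely where the threshold $q > \tfrac34(Q+2)$ enters. A secondary technical point is checking the geometric inclusion $\delta_\r(\Q^0) \subset \Q^+$, which relies on the stacking property of the anisotropic radii already noted in the discussion following Theorem \ref{weak-harnack}.
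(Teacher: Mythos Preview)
Your proposal is correct and follows essentially the same route the paper indicates: oscillation decay via Harnack applied to $M_r-u$ and $u-m_r$, followed by geometric iteration on the intrinsic cylinders. The paper's own proof is only a one-line reference to this standard argument (alternatively via Lemma~\ref{expand-pos} with $\theta=1$, or via the weak Harnack inequality Theorem~\ref{weak-harnack}); you have chosen to use the full Harnack inequality Theorem~\ref{harnack-thm}, which is an immaterial variation since the paper derives it from the weak Harnack combined with the boundedness Theorem~\ref{boundedness} that you invoke at the end anyway. Your handling of the extra source terms $cM_r$, $cm_r$, the scaling exponent $\gamma=2-(Q+2)/q>0$, and the geometric inclusion $\delta_{\omega/R_0}(\Q^0)\subset\Q^+$ are all correct.
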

\begin{remark}\label{remarkWZ}
	The estimates presented in Theorem \ref{harnack-thm} and Theorem \ref{local-holder}
	can be scaled and stated in arbitrary cylinders thanks to the invariance of the operator $\L$ 
	with respect to the group operations (dilations \eqref{gdil} and translations \eqref{grouplaw}) introduced in Section 
	\ref{preliminaries}. Moreover, all of our main results still hold true if we replace assumption \textbf{(H3)} with the 
	following one:
	\begin{center}
	$c, f \in L^q(\O)$ for some $q >\frac{Q+2}{2}$and 
	$b \in \left( L^q (\O) \right)^{m_0}$ for some $q >Q+2$ in $\O$.
	\end{center}
	This is exactly the integrability requirement assumed by Wang and Zhang in \cite{WZ-preprint}, thus our work is an improvement of this article for the case homogeneous ultraparabolic operators. Moreover, as we work in the appriopriate functional setting $\W$, our proofs of the main results are simpler than the ones proposed in \cite{WZ-preprint}. 
\end{remark}

\subsection{Comparison with existing results and organization of the paper}
The purpose of this paper is to provide a complete characterization of the De Giorgi-Nash-Moser weak regularity theory for the 
Kolmogorov equation in divergence form with measurable coefficients in the more natural space $\W$ for weak solutions to $\L u 
= f$. 
%First of all, in Section \ref{preliminaries} we provide a complete characterization of $\W$ and we show that the space $C^{\infty}_{c}(\overline \O)$ is dense in $\W$. 
First of all, 
we improve the existing result proved by Wang and Zhang in \cite{WZ4, WZ3, WZ-preprint} by providing a 
simpler and more elegant proof of the weak Harnack inequality and by explicitly showing the derivation of an invariant Harnack 
inequality, the first one available for weak solutions to $\L u = f$. In particular, our results hold true also under the 
assumptions of \cite{WZ-preprint} as it is pointed out in Remark \ref{remarkWZ} and Remark \ref{remark-boundedness}, and 
extend the ones obtained in the particular case of the kinetic Fokker-Planck equation in \cite{GI, GIMV, LN}.

Our proof is based on the combination of a weak Poincaré inequality for the space $\W$
with the Ink-spots theorem in the same spirit of \cite{IS-weak, GI}. We remark that, up to this day, the Ink-spot theorem has only been available just for the Fokker-Planck equation case in $\R^{2n+1}$, and thus also the technical result that we prove in Appendix \ref{ink-spot} is a complete novelty in this more general framework. Moreover, we manage to lower the integrability assumption on the lower order coefficients and on the source term $f$ and to extend the Moser's iterative method proposed in \cite{PP} to this more general case. 

The structure of the paper is the following. In Section \ref{preliminaries} we recall some known facts about operators $\L$ and we state some preliminary results. The proofs of some intermediate theorems (a Sobolev type and a Caccioppoli type inequality), together with the Moser's iterative method, are presented in Section \ref{moser}. Section \ref{poincare} is devoted to the proof of a weak Poincar\`{e} inequality. In Section \ref{harnack} we derive the weak Harnack inequality by combining the expansion of positivity and a covering argument known as Ink-spots theorem, whose proof is contained in Appendix \ref{ink-spot}. Finally, in Appendix \ref{stacked} we state a technical lemma regarding stacked cylinders.

\subsection*{Acknowledgements}
The authors are grateful to Prof. Sergio Polidoro for suggesting the question and the fruitful discussions. 
The first author is funded by the research grant PRIN2017 2017AYM8XW ``Nonlinear Differential Problems via Variational, Topological and Set-valued Methods''.

\setcounter{equation}{0}\setcounter{theorem}{0}
\section{Preliminaries}
\label{preliminaries}
In this Section we recall notation and results we need in order to deal with the non-Euclidean 
geometry underlying the operators $\L$ and $\K$. We refer to the articles \cite{PP,LP} and to the survey \cite{APsurvey} for a comprehensive treatment of this subject.

As first observed by Lanconelli and Polidoro in \cite{LP}, the operator $\K$ is invariant with respect to left 
translation in the group $\mathbb{K}=(\mathbb{R}^{N+1},\circ)$, where the group law is defined by 
\begin{equation}
	\label{grouplaw}
	(x,t) \circ (\xi, \tau) = (\xi + E(\tau) x, t + \tau ), \hspace{5mm} (x,t),
	(\xi, \tau) \in \R^{N+1},
\end{equation}
and
\begin{equation}\label{exp}
	E(s) = \exp (-s B), \qquad s \in \R.
\end{equation} 
Then $\mathbb{K}$ is a non-commutative group with zero element $(0,0)$ and inverse
\begin{equation*}
(x,t)^{-1} = (-E(-t)x,-t).
\end{equation*}
For a given $\zeta \in \R^{N+1}$ we denote by $\ell_{\z}$ the left traslation on $\mathbb{K}=(\R^{N+1},\circ)$ defined as follows
\begin{equation*}
	\ell_{\z}: \R^{N+1} \rightarrow \R^{N+1}, \quad \ell_{\z} (z) = \z \circ z.
\end{equation*}
Then the operator $\K$ is left invariant with respect to the Lie product $\circ$, that is
\begin{equation*}
	\label{ell}
    \K \circ \ell_{\z} = \ell_{\z} \circ \K \qquad {\rm \textit{or, equivalently,}} 
    \qquad \K\left( u( \z \circ z) \right)  = \left( \K u \right) \left( \z \circ z \right),
\end{equation*}
for every $u$ sufficiently smooth.
%for every $\zeta \in \rnn$. Hence, in particular, $\L \circ \ell_{\z}=\ell_{\z} \circ \L$.
%This means that, if $v(x,t) = u \big((\xi, \tau) \circ (x,t)\big)$ and $g(x,t) = f \big((\xi, \tau) \circ (x,t)\big)$, 
%we have 
%\begin{equation*}
%	\K u = f \quad \iff \quad \K v = g.
%\end{equation*}

We recall that, by \cite{LP} (Propositions 2.1 and 2.2), assumption \textbf{(H2)}
is equivalent to assume that, for some basis on $\R^N$, the matrix $B$ takes the following form
\begin{equation}
	\label{B}
	B =
	\begin{pmatrix}
		\OO   &   \OO   & \ldots &    \OO   &   \OO   \\  
		B_1   &    \OO  & \ldots &    \OO    &   \OO  \\
		\OO    &    B_2  & \ldots &  \OO    &   \OO   \\
		\vdots & \vdots & \ddots & \vdots & \vdots \\
		\OO    &  \OO    &    \ldots & B_\k    & \OO
	\end{pmatrix}
\end{equation}
where every $B_k$ is a $m_{k} \times m_{k-1}$ matrix of rank $m_j$, $j = 1, 2, \ldots, \k$ 
with 
\begin{equation*}
	m_0 \ge m_1 \ge \ldots \ge m_\k \ge 1 \hspace{5mm} \text{and} \hspace{5mm} 
	\sum \limits_{j=0}^\k m_j = N.
\end{equation*} 
In the sequel we will assume that $B$ has the canonical form \eqref{B}.
We remark that assumption \textbf{(H2)} is implied by the condition introduced by H\"{o}rmander in \cite{H}:
\begin{equation}\label{e-Horm}
{\rm rank\ Lie}\left(\partial_{x_1},\dots,\partial_{x_{m_0}},Y\right)(x,t) =N+1,\qquad \forall \, (x,t) \in \R^{N+1},
\end{equation}
where ${\rm  Lie}\left(\partial_{x_1},\dots,\partial_{x_{m_0}},Y\right)$ denotes the Lie algebra generated by the first order differential operators $\left(\partial_{x_1},\dots,\partial_{x_{m_0}},Y\right)$ computed at $(x,t)$.
Yet another condition equivalent to {\bf (H2)}, (see \cite{LP}, Proposition A.1), is that
\begin{eqnarray}\label{Cpositive}
C(t) > 0, \quad \text{for every $t>0$},
\end{eqnarray}
where
\begin{equation*}
	C(t) = \int_0^t \hspace{1mm} E(s) \, A_0 \, E^T(s) \, ds,
\end{equation*}
and $E(\cdot)$ is the matrix defined in \eqref{exp}.

We now recall that, under the the hypothesis of hypoellipticity, H\"{o}rmander in \cite{H} constructed the fundamental solution of $\K$ as
\begin{equation} \label{eq-Gamma0}
	\Gamma (z,\zeta) = \Gamma (\zeta^{-1} \circ z, 0 ), \hspace{4mm} 
	\forall z, \zeta \in \R^{N+1}, \hspace{1mm} z \ne \zeta,
\end{equation}
where
\begin{equation} \label{eq-Gamma0-b}
	\Gamma ( (x,t), (0,0)) = \begin{cases}
		\frac{(4 \pi)^{-\frac{N}{2}}}{\sqrt{\text{det} C(t)}} \exp \left( - 
		\frac{1}{4} \langle C^{-1} (t) x, x \rangle - t \, \trace (B) \right), \hspace{3mm} & 
		\text{if} \hspace{1mm} t > 0, \\
		0, & \text{if} \hspace{1mm} t < 0.
	\end{cases}
\end{equation}
Note that condition \eqref{Cpositive} implies that $\Gamma$ in \eqref{eq-Gamma0-b} is well-defined.

Let us now consider the second part of assumption \textbf{(H2)}. 
We say that $\K$ is invariant with respect to $(\delta_r)_{r>0}$ if  
\begin{equation}
	\label{Ginv}
      	 \K \left( u \circ \delta_r \right) = r^2 \delta_r \left( \K u \right), \quad \text{for every} \quad r>0,
\end{equation}
for every function $u$ sufficiently smooth. It is known (see Proposition 2.2 of \cite{LP}) that this dilation invariance property can be read in the expression of the matrix $B$ in \eqref{B}. More precisely, $\K$ satisfies \eqref{Ginv} if and only if the matrix $B$ takes the form \eqref{B}.
In this case, we have 
\begin{equation}
	\label{gdil}
	\d_{r} = \text{diag} ( r \I_{m_0}, r^3 \I_{m_1}, \ldots, r^{2\k+1} \I_{m_\k}, 
	r^2), \qquad \qquad r > 0.
\end{equation}
We next introduce a homogeneous norm of degree $1$ with respect to the dilations 
$(\d_{r})_{r>0}$ and a corresponding quasi-distance which is invariant with respect to the group operation \eqref{grouplaw}.
\begin{definition}
	\label{hom-norm}
	Let $\a_1, \ldots, \a_N$ be positive integers such that
	\begin{equation}\label{alphaj}
		\diag \left( r^{\a_1}, \ldots, r^{\a_N}, r^2 \right) = \d_r.
	\end{equation}
	If $\Vert z \Vert = 0$ we set $z=0$ while, if $z \in \R^{N+1} 
	\setminus \{ 0 \}$ we define $\Vert z \Vert = r$ where $r$ is the 
	unique positive solution to the equation
	\begin{equation*}
		\frac{x_1^2}{r^{2 \a_1}} + \frac{x_2^2}{r^{2 \a_2}} + \ldots 
		+ \frac{x_N^2}{r^{2 \a_N}} + \frac{t^2}{r^4} = 1.
	\end{equation*}
	Accordingly, we define the quasi-distance $d$ by
	\begin{equation}\label{def-dist}
		d(z, w) = \Vert z^{-1} \circ w \Vert , \hspace{5mm} z, w \in 
		\R^{N+1}.
	\end{equation}
\end{definition}
\begin{remark}
	As $\det E(t) = e^{t \hspace{1mm} \text{\rm trace} \,
		B} = 1$, the Lebesgue measure is invariant with respect to the translation group 
	associated to $\K$. Moreover, since $\det \d_r = r^{Q+2}$, we also have 
	\begin{equation*}
		\meas \left( \Q_r(z_0) \right) = r^{Q+2} \meas \left( \Q_1(z_0) \right), \qquad \forall
		\ r > 0, z_0 \in \R^{N+1},
	\end{equation*}
	where
	\begin{equation}
		\label{hom-dim}
		Q = m_0 + 3 m_1  + \ldots + (2\k+1) m_\k.
	\end{equation}
	The natural number $Q+2$ is called the \textit{homogeneous dimension of} $\R^{N+1}$ \textit{with respect to} 
	$(\d_{r})_{r > 0}$ and the spatial homogeneous dimension $Q$ is the hypoelliptic counterpart of the space dimension $N$ usually considered in the 
	parabolic setting, see \cite{NazarovUraltzeva}. This denomination is proper since the Jacobian determinant of $\d_{r}$ equals to $r^{Q+2}$.
\end{remark}
\begin{remark}
	The semi-norm $\Vert \cdot \Vert$ is homogeneous of degree $1$ with respect to $(\d_r )_{r>0}$, i.e.
	\begin{equation*}
		\Vert \d_r (x,t) \Vert = r  \Vert  (x,t) \Vert
		\qquad \forall r >0 \hspace{2mm} \text{and} \hspace{2mm} (x,t) \in \R^{N+1}.
	\end{equation*}
	Since in $\R^{N+1}$ all the norms which are $1$-homogeneous with respect to 
	$(\d_r)_{r>0}$ are equivalent, the norm introduced in Definition 
	\ref{hom-norm} is equivalent to the following one
	\begin{equation*}
		\Vert (x,t) \Vert_1 =|t|^{\frac{1}{2}}+|x|, \quad |x|=\sum_{j=1}^N |x_j|^{\frac{1}{\alpha_j}}
		% |x_1|^{\frac{1}{\a_1}} + \ldots + 
		%|x_N|^{\frac{1}{\a_N}} + | t |%^{\frac{1}{2}},
	\end{equation*}
	where the exponents $\alpha_j$, for $j=1,\ldots,N$ were introduced in \eqref{alphaj}. We 
	prefer the norm of Definition \ref{hom-norm} to $\parallel \cdot \parallel_1$ 
	because its level sets (spheres) are smooth surfaces.
\end{remark}
\begin{remark}
	\label{ball-cil} 
	We recall that for every cylinder $\Q_{r}(z_{0})$ defined in \eqref{rcylind} there exists a positive constant 
	$\overline c$ \cite[equation (21)]{WZ3} such that
	\begin{align*}
		 B_{r_{}}(x^{(0)}_{0}) &\times B_{r_{1}^{3}}(x^{(1)}_{0}) 
		\times \ldots \times
		B_{r_{1}^{2\k+1}}(x^{(\k)}_{0}) \times (-r_{1}^{2}, 0] \\
		&\subset
		\Q_{r}(z_{0}) 
		\subset B_{r_{2}}(x^{(0)}_{0}) \times B_{r_{2}^{3}}(x^{(1)}_{0}) 
		\times \ldots \times
		B_{r_{2}^{2\k+1}}(x^{(\k)}_{0}) \times (-r_{2}^{2}, 0],
	\end{align*}
	where $r_{1}= r/\overline c$ and $r_{2} = \overline c r$.
	From now on, by abuse of notation we will sometimes consider the newly introduced ball representation instead of the definition of cylinder in \eqref{rcylind}.
\end{remark}
Since $\K$ is dilation invariant with respect to $(\d_r )_{r>0}$, also its fundamental solution $\G$ is a homogeneous function of degree $- Q$, namely
\begin{equation*}
	\Gamma \left( \d_{r}(z), 0 \right) = r^{-Q} \hspace{1mm} \Gamma
	\left( z, 0 \right), \hspace{5mm} \forall z \in \R^{N+1} \setminus
	\{ 0 \}, \hspace{1mm} r > 0.
\end{equation*}
This property implies a $L^p$ estimate for Newtonian potential (c. f. for instance \cite{FO}).
\begin{theorem}
	\label{FSom}
	Let $\a \in ]0, Q+2[$ and let $G \in C (\R^{N+1} \setminus \{ 0 \})$ be a 
	$\d_\l-$homogeneous function of degree $\a - Q - 2$. If $f \in L^p 
	(\R^{N+1})$ for some $p \in ]1, +\infty[$, then the function
	\begin{equation*}
		G_f (z) := \int_{\R^{N+1}} G ( \z^{-1} \circ z) f(\z) d\z,
	\end{equation*}
	is defined almost everywhere and there exists a constant $c = c(Q, p)$ such
	that 
	\begin{equation*}
		\Vert G_f \Vert_{L^q(\R^{N+1}} \le c \max_{\Vert z 
			\Vert = 1} |G(z)| \Vert f \Vert_{L^p (\R^{N+1})},
	\end{equation*}
	where $q$ is defined by
	\begin{equation*}
		\frac{1}{q} = \frac{1}{p} - \frac{\a}{Q+2}.
	\end{equation*}
\end{theorem}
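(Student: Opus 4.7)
The plan is to reduce Theorem \ref{FSom} to a weak-type Young inequality on the homogeneous group $\mathbb{K}=(\R^{N+1},\circ)$ equipped with the dilations $(\d_r)_{r>0}$. First I would exploit the $\d_r$-homogeneity of $G$ of degree $\a-Q-2$ to pass from the abstract kernel to the explicit pointwise majorant
\[
|G(\z^{-1}\circ z)|\le M\,\|\z^{-1}\circ z\|^{\a-Q-2}, \qquad M:=\max_{\|z\|=1}|G(z)|,
\]
and thus reduce the problem to the convolution estimate $\|K*|f|\|_{L^q(\R^{N+1})}\le c\,\|f\|_{L^p(\R^{N+1})}$ for the Riesz-type kernel $K(z):=\|z\|^{\a-Q-2}$, where the convolution is understood with respect to the non-commutative group law $\circ$.

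Next I would compute the distribution function of $K$. Because the Jacobian of $\d_r$ equals $r^{Q+2}$, the ball $\{z:\|z\|<s\}$ has Lebesgue measure comparable to $s^{Q+2}$, so the super-level set $\{K>t\}$ coincides with $\{\|z\|<t^{-1/(Q+2-\a)}\}$ and has measure bounded by a constant multiple of $t^{-(Q+2)/(Q+2-\a)}$. Hence $K$ lies in the Marcinkiewicz space $L^{p_0,\infty}$ with critical exponent $p_0=(Q+2)/(Q+2-\a)$, which is exactly the Riesz-potential exponent for a space of homogeneous dimension $Q+2$.

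Finally I would invoke the weak-type Young inequality: if $K\in L^{p_0,\infty}$ and $1<p<q<\infty$ satisfy $1/q=1/p+1/p_0-1$, then convolution with $K$ sends $L^p$ into $L^q$. The proof is the standard Marcinkiewicz interpolation argument, that is, split $K=K\chi_{\{\|z\|\le R\}}+K\chi_{\{\|z\|>R\}}$, estimate the distribution function of $K*|f|$ at level $\l$ via Chebyshev applied separately to the two pieces, and optimize the cut-off $R=R(\l,\|f\|_{L^p})$. The relation $1/q=1/p+1/p_0-1$ then becomes $1/q=1/p-\a/(Q+2)$, which is exactly the stated one, and the prefactor $M=\max_{\|z\|=1}|G(z)|$ survives all manipulations.

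The main obstacle is the non-commutativity of $\mathbb{K}$ whenever $B\ne 0$: one must justify the Fubini exchanges and the translation-of-variable steps underlying the weak Young inequality in this non-abelian setting. This is precisely where the identity $\det E(t)=e^{t\trace B}=1$ (consequence of the canonical form \eqref{B} of $B$, which is strictly block-subdiagonal) plays its role, guaranteeing the left- and right-invariance of the Lebesgue measure; together with the homogeneity $\meas(\d_r A)=r^{Q+2}\meas(A)$ this makes the classical Hardy-Littlewood-Sobolev proof go through verbatim in the homogeneous Lie group $\mathbb{K}$.
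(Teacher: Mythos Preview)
Your proposal is correct and is precisely the standard Hardy--Littlewood--Sobolev argument on homogeneous groups due to Folland. Note that the paper does not give its own proof of this theorem: it is stated as a known potential estimate with a reference to \cite{FO}, so your sketch in fact supplies the argument that the paper takes for granted.
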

Now, we are able to define the $\G-$\textit{potential} of the function $f \in L^1(\R^{N+1})$ 
as follows
\begin{equation}
	\label{L0p}
	\G (f) (z) = \int_{\R^{N+1}} \G (z, \z ) f(\z) d\z, \qquad 
	z \in \R^{N+1}.
\end{equation}
We also remark that the potential $\G (D_{m_0} f): \R^{N+1} \longrightarrow \R^{m_0}$ is 
well-defined for any $f \in L^p (\R^{N+1})$, at least in the distributional sense, that is 
\begin{equation*}
	\label{pot}
	\G (D_{m_0} f ) (z) := - \int_{\R^{N+1}} D^{(\x)}_{m_0} \G (z, \x) \,
	f(\x) \, d\x,
\end{equation*}
where $D^{(\x)}_{m_0} \G (x,t, \x, \t)$ is the gradient with respect to $\x_1, 
\ldots, \x_{m_0}$. Based on Theorem \ref{FSom}, we derive the following explicit potential estimates by substituting $\a = 1$ and $\a=2$ when considering the $\Gamma$-potential for $f$ and $D_{0}f$, respectively. For the proof of this corollary we refer to \cite[Corollary 2.2]{PP}. 
\begin{corollary}
	\label{corollary}
	Let $f \in L^p (\Q_r)$. There exists a positive constant $c = c(T,B)$ 
	such that
	\begin{align} 
		\Vert \G (f) \Vert_{L^{p**}(\Q_r)} &\le c \Vert f 
		\Vert_{L^{p}(\Q_r)},  \label{stima1}  \\ 
		\Vert \G (D_{m_0} f) \Vert_{L^{p*}(\Q_r)} &\le c 
		\Vert f \Vert_{L^{p}(\Q_r)},\label{stima2}
	\end{align}
	where $\frac{1}{p*}= \frac{1}{p} - \frac{1}{Q+2}$ and $\frac{1}{p**}= 
	\frac{1}{p} - \frac{2}{Q+2}$.
\end{corollary}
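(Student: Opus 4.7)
The corollary is a direct consequence of the potential estimate in Theorem \ref{FSom} applied twice, once to $\Gamma$ itself and once to its first $m_0$ spatial derivatives, together with a zero-extension trick to reduce the domain from $\Q_r$ to $\R^{N+1}$.

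The first step is to extend $f$ by zero outside $\Q_r$, obtaining a function $\tilde f\in L^p(\R^{N+1})$ with $\|\tilde f\|_{L^p(\R^{N+1})}=\|f\|_{L^p(\Q_r)}$. Since $\Gamma(z,\zeta)=\Gamma(\zeta^{-1}\circ z,0)$ vanishes for $t-\tau<0$, the $\Gamma$-potential commutes with zero-extension on $\Q_r$, and similarly for $\G(D_{m_0}\tilde f)$, so it is enough to bound $\|\G(\tilde f)\|_{L^{p^{**}}(\R^{N+1})}$ and $\|\G(D_{m_0}\tilde f)\|_{L^{p^{*}}(\R^{N+1})}$ in terms of $\|\tilde f\|_{L^p(\R^{N+1})}$.

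Next I would identify the correct homogeneity of the kernels. By the discussion preceding the corollary, $\G$ is $\d_r$-homogeneous of degree $-Q$, that is $\Gamma(\d_r z,0)=r^{-Q}\Gamma(z,0)$. To apply Theorem \ref{FSom} to $G=\Gamma$, I solve $\a-Q-2=-Q$, which yields $\a=2$, and the exponent relation in the theorem gives $\tfrac{1}{q}=\tfrac{1}{p}-\tfrac{2}{Q+2}=\tfrac{1}{p^{**}}$, proving \eqref{stima1}. For \eqref{stima2} I differentiate: since the variables $x_1,\dots,x_{m_0}$ carry weight $1$ under $\d_r$, the kernel $D^{(\xi)}_{m_0}\Gamma$ is $\d_r$-homogeneous of degree $-Q-1$; choosing $\a=1$ in Theorem \ref{FSom} then gives the exponent $\tfrac{1}{q}=\tfrac{1}{p}-\tfrac{1}{Q+2}=\tfrac{1}{p^{*}}$, as required.

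Finally, writing $\G(D_{m_0}\tilde f)(z)=-\int_{\R^{N+1}}D^{(\xi)}_{m_0}\Gamma(z,\xi)\tilde f(\xi)\,d\xi$ puts this quantity in the form $G_{\tilde f}$ of Theorem \ref{FSom} with $G=-D^{(\xi)}_{m_0}\Gamma$, while $\G(\tilde f)=\int\Gamma(\cdot,\xi)\tilde f\,d\xi$ is of the same form with $G=\Gamma$. In both cases the constant produced by Theorem \ref{FSom} involves $\max_{\|z\|=1}|G(z)|$, which is finite and depends only on $B$ and on $Q$ through the explicit formula \eqref{eq-Gamma0-b} (here the bound $t\le T$ when restricting to $\Q_r$ can be used to control any $t\,\trace B$ factor, which is the source of the stated dependence $c=c(T,B)$). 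The only subtle point, and the one I expect to require the most care, is verifying that $D^{(\xi)}_{m_0}\Gamma$ is indeed continuous on $\R^{N+1}\setminus\{0\}$ and integrable as claimed, so that Theorem \ref{FSom} legitimately applies; once this is in place, the two estimates follow by direct substitution of the homogeneity degrees.
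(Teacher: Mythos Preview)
Your proposal is correct and matches the paper's approach exactly: the paper does not give a proof but refers to \cite[Corollary~2.2]{PP} and indicates that one applies Theorem~\ref{FSom} with the two homogeneity degrees, which you identify correctly as $\a=2$ for $\Gamma$ and $\a=1$ for $D^{(\xi)}_{m_0}\Gamma$ (the paper's one-line description actually has these swapped, a typo). One small simplification: since $B$ has the block form \eqref{B} we have $\trace B=0$, so the factor $e^{-t\,\trace B}$ in \eqref{eq-Gamma0-b} is identically $1$ and no time-cutoff argument is needed to control it.
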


Lastly, we show that it is possible to use the fundamental solution $\G$ as a test function in the definition of 
sub and super-solution. The following result extends \cite[Lemma 2.5]{PP}, \cite[Lemma 3]{CPP} 
and \cite[Lemma 2.6]{APR} to the functional setting $\W$.
\begin{lemma}
	\label{lemma2.5}
 Let \textbf{(H1)}-\textbf{(H2)} hold. Let $c \in L^q(\O)$ and $b \in \left( L^q(\O) \right)^{m_0}$
	for some $q >\frac{Q+2}{2}$ and let $f \in L^2(\O)$. Moreover, let us assume that $\div b$ in $\O$. Let $v$ be a non-negative weak sub-solution to $\L v = f$  %\eqref{eq1}
	in $\O$. For 
	every $\phi \in \D (\O)$, $\phi \ge 0$, and for almost every $z 
	\in \R^{N+1}$, we have 
	\begin{align*}
		\int_{\O} - \langle A Dv, D (\G (z, \cdot) \phi ) \rangle &+
		\G (z, \cdot) \phi Yv 
		%&-\int_\O \langle a, D( \G(z, \cdot) \phi ) \rangle v 
		+ \langle b, D v \rangle \G (z, \cdot) \phi  + c v \G(z, \cdot) \phi -  \G (z, \cdot) \phi f
		\ge 0.
	\end{align*}
	An analogous result holds for weak super-solutions to $\L u = f$.
\end{lemma}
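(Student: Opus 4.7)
The strategy is a regularization argument: replace the singular weight $\Gamma(z,\cdot)$ by a family of smooth approximants and then pass to the limit. Concretely, for $\varepsilon>0$ I would pick a smooth non-negative cutoff $\eta_\varepsilon\in C^\infty(\R^{N+1})$ with $\eta_\varepsilon\equiv 0$ on $\{\zeta: d(z,\zeta)<\varepsilon/2\}$, $\eta_\varepsilon\equiv 1$ on $\{d(z,\zeta)>\varepsilon\}$, and $|D_{m_0}\eta_\varepsilon|\le C/\varepsilon$. Since $\Gamma(z,\cdot)$ is smooth and strictly positive off the diagonal, the product $\psi_\varepsilon:=\Gamma(z,\cdot)\,\eta_\varepsilon\,\phi$ belongs to $\mathcal D(\O)$ and is non-negative, so it is an admissible test function in \eqref{kolmo-sub}. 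This gives
\begin{equation*}
\int_{\O} -\langle ADv, D\psi_\varepsilon\rangle + \psi_\varepsilon Yv + \langle b,Dv\rangle \psi_\varepsilon + cv\,\psi_\varepsilon \;\ge\; \int_{\O} f\psi_\varepsilon.
\end{equation*}
The goal is then to show that every term passes to its natural limit as $\varepsilon\to 0$ for almost every $z$.

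The lower order terms pose no difficulty: because $\Gamma(z,\cdot)$ is homogeneous of degree $-Q$ and therefore $L^p_{\loc}$ for every $p<(Q+2)/Q$, the products $cv\Gamma\phi$, $\langle b,Dv\rangle\Gamma\phi$ and $f\Gamma\phi$ are integrable by Hölder's inequality (using $v,Dv\in L^2$, $b,c,f\in L^q$ with $q>(Q+2)/2$), and monotone/dominated convergence gives the limit. For the principal part I would split $D\psi_\varepsilon = \eta_\varepsilon\phi\, D\Gamma + \Gamma\eta_\varepsilon D\phi + \Gamma\phi\, D\eta_\varepsilon$; the first two contributions converge since $D_{m_0}\Gamma$ is homogeneous of degree $-(Q+1)$, hence $L^p_{\loc}$ for $p<(Q+2)/(Q+1)$, while the $D\eta_\varepsilon$ shell term vanishes in the limit because on $\{\varepsilon/2<d(z,\cdot)<\varepsilon\}$ we have $|\Gamma D\eta_\varepsilon|\lesssim \varepsilon^{-Q-1}$ on a set of measure $\lesssim \varepsilon^{Q+2}$, giving an $O(\varepsilon)$ bound after using $ADv\in L^2$ and Cauchy--Schwarz.

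The main obstacle, and the genuinely new part compared to \cite{PP}, is the $Yv$ term, since here $Yv$ lies only in $L^2H^{-1}$ and the test function $\Gamma(z,\cdot)\phi$ is \emph{not} in $L^2H^1_c$ (the squared singularity of $D_{m_0}\Gamma$ fails to be integrable). The idea is to use Remark \ref{remark-Y} to decompose $Yv = \div_{m_0}H_1 + H_0$ with $H_0,H_1\in L^2(\O)$, so that
\begin{equation*}
\int_\O \psi_\varepsilon\, Yv \;=\; \int_\O \psi_\varepsilon H_0 \;-\; \int_\O D_{m_0}\psi_\varepsilon\cdot H_1.
\end{equation*}
Convergence of the first integral follows as above. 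For the second, the critical estimate is that the $\Gamma$-potential $D_{m_0}\Gamma * H_1$ belongs to $L^{p^*}(\R^{N+1})$ with $\tfrac{1}{p^*}=\tfrac12-\tfrac{1}{Q+2}$ by Corollary \ref{corollary} applied to $H_1\in L^2$. By Fubini this forces, for almost every $z\in\R^{N+1}$, the integral $\int D_{m_0}\Gamma(z,\zeta)H_1(\zeta)\phi(\zeta)\,d\zeta$ to be absolutely convergent; dominated convergence then yields $\int D_{m_0}\psi_\varepsilon\cdot H_1 \to \int D_{m_0}(\Gamma\phi)\cdot H_1$. This is precisely why the statement is formulated for a.e. $z$. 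The analogous argument for super-solutions is identical up to changing signs, which proves the final clause.
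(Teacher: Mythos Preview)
Your strategy is exactly the paper's: cut off $\Gamma(z,\cdot)$ near the diagonal, test with the resulting $\psi_\varepsilon=\Gamma\eta_\varepsilon\phi\in\mathcal D(\O)$, and pass to the limit term by term. Your handling of the $Yv$ term via Remark~\ref{remark-Y} is in fact more explicit than the paper's one-line ``density argument''. Two technical steps, however, do not go through as written.

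First, the lower order terms. Direct H\"older for a \emph{fixed} $z$ fails: with $\Gamma(z,\cdot)\in L^p_{\loc}$ only for $p<(Q+2)/Q$, you would need $cv$, $\langle b,Dv\rangle$, $f$ to lie in $L^{p'}$ with $p'>(Q+2)/2$, but $cv,\langle b,Dv\rangle\in L^{2q/(q+2)}$ and $f\in L^2$ do not meet this. The remedy is exactly the tool you already invoke later: Corollary~\ref{corollary} gives $\Gamma*(cv\phi),\,\Gamma*(\langle b,Dv\rangle\phi)\in L^{2\alpha}$ and $\Gamma*(f\phi)\in L^{2\kappa}$, hence these potentials are finite for a.e.\ $z$, and dominated convergence then applies. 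This is precisely what the paper does for $I_{4,\varepsilon},I_{5,\varepsilon},I_{6,\varepsilon}$.

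Second, the shell term $\int \Gamma\phi\,\langle ADv,D\eta_\varepsilon\rangle$ is not $O(\varepsilon)$ by Cauchy--Schwarz: your bound yields $\|\Gamma D\eta_\varepsilon\|_{L^2(\text{shell})}\,\|ADv\|_{L^2}\sim \varepsilon^{-Q/2}\|ADv\|_{L^2}$, which diverges. The correct argument (as in \cite{PP}, to which the paper defers for $I_{3,\varepsilon}$) again uses the potential estimate: since $D_{m_0}\Gamma$ is homogeneous of degree $-(Q+1)$, Corollary~\ref{corollary} gives $\int|D_{m_0}\Gamma(z,\zeta)|\,|ADv(\zeta)|\,\phi(\zeta)\,d\zeta<\infty$ for a.e.\ $z$; on the shell one has $|\Gamma\,D\eta_\varepsilon|\lesssim |D_{m_0}\Gamma|$, so the shell integral is dominated by $\int_{d(z,\cdot)\le\varepsilon}|D_{m_0}\Gamma|\,|ADv|\,\phi\to 0$ by absolute continuity. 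In short, the potential estimates of Corollary~\ref{corollary} are the engine throughout, not just for the $H_1$ piece.
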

\begin{proof}
For every $\varepsilon >0$, we set 
\begin{eqnarray}
\psi_\varepsilon(z,\z)=1-\chi_{\varepsilon,2\varepsilon}\left(\Vert \z^{-1} \circ z \Vert\right)
\end{eqnarray}
where $\chi_{\r,r} \in C^\infty([0,+\infty))$ is the cut-off function defined by
\begin{equation}\label{chi}
\chi_{\r,r}(s)=\left\{ \begin{array}{ll}
0,\quad &\textrm{if $s \geq r$},\\
1,\quad &\textrm{if $0\leq s \leq \r$},
\end{array} \right. \quad |\chi'_{\r,r}|\leq \frac{2}{r-\r},
\end{equation}
with $\frac{1}{2} \leq \r < r \leq 1$. As $v$ is a weak-subsolution, for every $\varepsilon >0$ and $z \in \R^{N+1}$, we have
\begin{align*}
	0 \leq	
	%\int_{\O} [- \langle A Dv, D (\G (z, \cdot) \phi \psi_\varepsilon(z,\cdot)) \rangle +
		%\G (z, \cdot) \phi \psi_\varepsilon(z,\cdot) Yv](\z)d\z + \\
		%&\quad \int_\O [-\langle a, D( \G(z, \cdot) \phi \psi_\varepsilon(z,\cdot)) \rangle v + \langle b, D v \rangle \G (z, \cdot) \phi \psi_\varepsilon(z,\cdot) ](\z)d\z \\
		%&\quad+ \int_\O [c v \G(z, \cdot) \phi \psi_\varepsilon(z,\cdot)-  \G (z, \cdot) \phi \psi_\varepsilon(z,\cdot) f](\z)d\z\\
%&=
-I_{1,\varepsilon}(z)+I_{2,\varepsilon}(z)-I_{3,\varepsilon}(z)+I_{4,\varepsilon}(z)+I_{5,\varepsilon}(z)+I_{6,\varepsilon}(z)		
	\end{align*}
where
\begin{equation*}
\begin{split}
I_{1,\varepsilon}(z)&=\int_\O [\langle A D v, D\G (z,\cdot) \rangle \phi \psi_\varepsilon(z,\cdot)](\z)d\z \qquad \qquad \qquad  
I_{3,\varepsilon}(z)=\int_\O [\langle A D v, D \psi_\varepsilon(z,\cdot)\rangle \phi \G(z,\cdot)](\z)d\z\\
I_{2,\varepsilon}(z)&=\int_\O [\G(z,\cdot
)\psi_\varepsilon(z,\cdot)(-\langle A D v, D \phi)\rangle + \phi Y v)](\z)d\z \quad
I_{4,\varepsilon}(z)=%\int_\O [-\langle a, D( \G(z, \cdot) \phi \psi_\varepsilon(z,\cdot)) \rangle v 
\int_\O  \langle b, D v \rangle \G (z, \cdot) \phi \psi_\varepsilon(z,\cdot)](\z)d\z\\
I_{5,\varepsilon}(z)&=\int_\O [c v \G(z, \cdot) \phi \psi_\varepsilon(z,\cdot)](\z)d\z \qquad 
\qquad \quad \qquad \qquad 
I_{6,\varepsilon}(z)=-\int_\O [\G (z, \cdot) \phi \psi_\varepsilon(z,\cdot) f](\z)d\z
\end{split}
\end{equation*}
Keeping in mind Corollary \ref{corollary}, it is clear that the integrals that define $I_{i,\varepsilon}(z)$, $i=1,2,3$ are potentials and therefore convergent for almost every $z \in \R^{N+1}$. Thus, by a similar argument to the one used in \cite{PP} in the proof of Lemma 2.5, we infer that for almost every $z \in \R^{N+1}$
\begin{equation*}
\begin{split}
\lim_{\varepsilon \to 0^+}I_{1,\varepsilon}(z)&=\int_\O [\langle A D v, D\G (z,\cdot) \rangle \phi ](\z)d\z \qquad \qquad \lim_{\varepsilon \to 0^+}I_{3,\varepsilon}(z)=0\\
\lim_{\varepsilon \to 0^+}I_{2,\varepsilon}(z)&=\int_\O [\G(z,\cdot
)(-\langle A D v, D \phi)\rangle + \phi Y v)](\z)d\z ,
\end{split}
\end{equation*}
where the passage to the limit for the term $I_{2, \e}$ by a density argument.
We now take care of the term $I_{4,\varepsilon}$. Integrating by parts and taking advantage of the assumption $\div b \ge 0$, we obtain
\begin{align*}
I_{4,\varepsilon}(z)&=
-\int_\O [\div b \, \G(z,\cdot)\phi\psi_\varepsilon(z,\cdot)v](\zeta) d\zeta-\int_\O[\langle b,D\left(\G(z,\cdot)\phi\psi_\varepsilon(z,\cdot) \right) \rangle \,v](\zeta)d\zeta\\
&\leq -\int_\O[\langle b,D\left(\G(z,\cdot)\phi\psi_\varepsilon(z,\cdot) \right) \rangle \,v](\zeta)d\zeta
\end{align*}
We are left with the estimate of a potential and therefore we exploit once again Corollary \ref{corollary}. Since we have $b \in \left( L^q (\O) \right)^{m_0}$ and $v \in L^2(\O)$, we get 
\begin{equation*}
%|D \G(z,\cdot) | |D \phi| |a| |v| \in L_{loc}^{2 \tilde{\alpha}}(\O), \quad
| \G(z,\cdot) | | \phi| |b| |D v| \in L^{2 \alpha}(\O),
\end{equation*}
where 
\begin{equation*}
\alpha=\alpha(q)=\frac{q(Q+2)}{q(Q-2)+2(Q+2)}>1 \qquad \text{if and only if} \qquad q > \frac{Q+2}{2}.
\end{equation*}
%where $$ and $\tilde{\alpha}=\tilde{\alpha}(q)=\frac{q(Q+2)}{Qq+2Q+4}>1$ 
Hence, $|\langle b, D \left(\G(z,\cdot)  \phi \psi_\varepsilon(z,\cdot) \right) \rangle v|\leq | \langle b, D \left(\G(z,\cdot)   \phi \right) \rangle v   | \in L^{1}(\O)$.
\begin{comment}
\begin{equation*}
\begin{split}
|\langle a,D \G(z,\cdot)\rangle   \phi \psi_\varepsilon(z,\cdot)  v| &\leq |\langle a,D \G(z,\cdot)\rangle  \phi   v| \in L_{loc}^{1}(\O),\\
|\langle b, Dv \rangle \G(z,\cdot)   \phi \psi_\varepsilon(z,\cdot)|&\leq | \langle b, Dv \rangle \G(z,\cdot)   \phi  | \in L_{loc}^{1}(\O),
\end{split}
\end{equation*}
\end{comment}
Thus, the Lebesgue convergence theorem yields
\begin{align*}
\lim_{\varepsilon \to 0^+}I_{4,\varepsilon}(z)&=\lim_{\varepsilon \to 0^+}-\int_\O[\langle b,D\left(\G(z,\cdot)\phi\psi_\varepsilon(z,\cdot) \right) \rangle \,v](\zeta)d\zeta\\
&=\int_\O [\langle b, D \G (z, \cdot) \phi \rangle  v ](\z)d\z.
\end{align*}
Similarly, we can estimate the term $I_{5,\varepsilon}$ noting that $|c| |v| |\G(z,\cdot) | |\phi| \in L^{2 \alpha}(\O)$ 
\begin{comment}
\begin{equation*}
|c| |v| |\G(z,\cdot) | |\phi| \in L^{2 \alpha}(\O),
\end{equation*} 
\end{comment}
with $\alpha$ as above. As a consequence, we have
\begin{equation*}
|c v \G(z,\cdot)  \phi \psi_\varepsilon(z,\cdot) | \leq |c v \G(z,\cdot)  \phi  | \in L^{1}(\O), \qquad \text{and thus} \, \, 
\lim_{\varepsilon \to 0^+}I_{5,\varepsilon}(z)=\int_\O [c v \G(z, \cdot) \phi ](\z)d\z.
\end{equation*}
\begin{comment}
and thus
\begin{eqnarray*}
\lim_{\varepsilon \to 0^+}I_{5,\varepsilon}(z)=\int_\O [c v \G(z, \cdot) \phi ](\z)d\z.
\end{eqnarray*}
\end{comment}
Now, we are left with the estimate of term $I_{6,\varepsilon}$, which is again a $\G$-potential such that
\begin{equation*}
|\G(z,\cdot) | |\phi| |f| \in L^{2 \kappa}(\O),
\end{equation*}
where $\kappa=\frac{Q+2}{Q-2}$. Thus, we infer 
$|\G(z,\cdot)  \phi \psi_\varepsilon(z,\cdot) f| \leq |\G(z,\cdot)  \phi  f| \in L^{1}(\O)$.
Therefore we conclude the proof by applying the dominated convergence theorem to $I_{6,\varepsilon}(z)$.
%and obtain
%\begin{eqnarray*}
%\lim_{\varepsilon \to 0^+}I_{6,\varepsilon}(z)=-\int_\O [\G (z, \cdot) \phi f](\z)d\z.
%\end{eqnarray*}
\end{proof}

\medskip

We conclude this Section by recalling the following lemma for whose proof we refer to \cite[Lemma 6]{CPP}.
\begin{lemma}
	\label{cylinders}
	There exists a positive constant $\overline{c} \in (0,1)$ such that
	\begin{equation}
		\label{cylindereq}
		z \circ \Q_{\overline{c} r (r-\r)} \subseteq \Q_r,
	\end{equation}
	for every $0 < \r < r \le 1$ and  $z \in \Q_\r$. 
\end{lemma}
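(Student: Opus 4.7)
The plan is to verify the inclusion directly from the group law \eqref{grouplaw} and the non-isotropic dilation structure \eqref{gdil}. Fix $z=(x,t)\in\Q_\rho$ and an arbitrary $w=(\xi,\tau)\in\Q_s$ with $s:=\overline{c}\,r(r-\rho)$, and compute from \eqref{grouplaw} that $z\circ w=(\xi+E(\tau)x,\,t+\tau)$. Since $\Q_r=\delta_r(\Q_1)$, what must be checked is the time bound $|t+\tau|\le r^2$ together with the block-by-block spatial bounds $|(\xi+E(\tau)x)^{(j)}|\le r^{2j+1}$ for every $j=0,1,\ldots,\kappa$.

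The time estimate is immediate: since $|t|\le\rho^2$ and $|\tau|\le s^2=\overline{c}^{\,2}r^2(r-\rho)^2$, the requirement $\rho^2+\overline{c}^{\,2}r^2(r-\rho)^2\le r^2$ reduces to $\overline{c}^{\,2}r^2(r-\rho)\le r+\rho$, which holds as soon as $\overline{c}\le 1$. For the spatial bounds, the canonical form \eqref{B} implies that $B$ is nilpotent with $B^{\kappa+1}=0$, so that
\begin{equation*}
E(\tau)=\sum_{k=0}^{\kappa}\frac{(-\tau)^k}{k!}B^k,
\end{equation*}
and the only non-vanishing block of $B^{j-l}$ is the $(j,l)$-block $B_jB_{j-1}\cdots B_{l+1}$. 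Consequently
\begin{equation*}
(\xi+E(\tau)x)^{(j)}=\xi^{(j)}+x^{(j)}+\sum_{l=0}^{j-1}\frac{(-\tau)^{j-l}}{(j-l)!}\,B_jB_{j-1}\cdots B_{l+1}\,x^{(l)},
\end{equation*}
and the triangle inequality together with $|\xi^{(j)}|\le s^{2j+1}$, $|x^{(l)}|\le\rho^{2l+1}$, $|\tau|\le s^2$ yields
\begin{equation*}
|(\xi+E(\tau)x)^{(j)}|\le s^{2j+1}+\rho^{2j+1}+C_B\sum_{l=0}^{j-1}s^{2(j-l)}\rho^{2l+1},
\end{equation*}
where $C_B$ depends only on the matrix $B$.

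The last step is to show that this quantity is bounded by $r^{2j+1}$ for a suitable choice of $\overline{c}$. Substituting $s=\overline{c}\,r(r-\rho)$ and using $\rho\le r\le 1$ together with the trivial bounds $(r-\rho)^{2j}\le 1$, $(r-\rho)^{2(j-l)}\le(r-\rho)^2$ for $l<j$, and $\rho^{2l+1}\le r^{2l+1}$, one obtains $s^{2j+1}\le\overline{c}^{\,2j+1}r^{2j+1}(r-\rho)$ and $s^{2(j-l)}\rho^{2l+1}\le\overline{c}^{\,2(j-l)}r^{2j+1}(r-\rho)^2$ for every $l<j$. Combining these with the elementary inequality $r^{2j+1}-\rho^{2j+1}\ge(r-\rho)r^{2j}$ (which follows from the factorization $r^{2j+1}-\rho^{2j+1}=(r-\rho)\sum_{i=0}^{2j}r^{2j-i}\rho^i$) and dividing through by $(r-\rho)r^{2j}$, the desired inclusion is implied by the uniform condition $\overline{c}^{\,2j+1}+C_B\,\kappa\,\overline{c}^{\,2}\le 1$ for every $j=0,1,\ldots,\kappa$, which is satisfied by any sufficiently small $\overline{c}\in(0,1)$ depending only on $\kappa$ and on $B$. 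The main subtlety of the argument is precisely the interplay between the non-isotropic dilation $\delta_r$ and the nilpotent drift $E(\tau)$: the factor $r(r-\rho)$ in the hypothesis (rather than simply $r-\rho$) is exactly what makes each $s^{2(j-l)}$ small enough to absorb the corresponding lower-order mixing term into the gap $r^{2j+1}-\rho^{2j+1}$.
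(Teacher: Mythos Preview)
Your proof is correct. The paper does not actually prove this lemma; it simply refers the reader to \cite[Lemma 6]{CPP}. Your argument is the natural direct verification from first principles: write out $z\circ w$ via \eqref{grouplaw}, use the nilpotency of the canonical $B$ in \eqref{B} to expand $E(\tau)=\sum_{k=0}^{\kappa}\frac{(-\tau)^k}{k!}B^k$, and check the time bound and each spatial block separately. The key structural point, which you identify explicitly, is that the extra factor $r$ in the radius $\overline{c}\,r(r-\rho)$ (rather than merely $\overline{c}(r-\rho)$) is precisely what is needed so that the off-diagonal drift terms $s^{2(j-l)}\rho^{2l+1}$ can be absorbed into the gap $r^{2j+1}-\rho^{2j+1}\ge (r-\rho)r^{2j}$ coming from the factorization. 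This is also the mechanism behind the argument in the cited reference, so your proof supplies exactly what the paper delegates elsewhere.
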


        \setcounter{equation}{0}\setcounter{theorem}{0}
\section{Local boundedness for weak solutions to $\L u = f$}
\label{moser}
Other than the new framework for the study of the weak regularity theory of \eqref{defL}, the local H\"older continuity for solutions and the Harnack inequality, the Moser's iterative scheme for weak solutions to \eqref{defL} with unbounded source term $f$, to whose proof this Section is devoted, is in itself one of the main novelties of our work. 

Indeed, this procedure was firstly introduced by Moser in \cite{M3, M4} and it is based on the iterative combination of a Caccioppoli and a Sobolev inequality. 
When considering the classical uniformly elliptic and parabolic settings, the Caccioppoli inequality provides us with a priori estimates for the $L^2$ norm of the complete gradient of the solution in terms of the $L^2$ norm of the solution and we are able to consider the classical Sobolev embedding to obtain a gain of integrability for the solution.

This is not the case when dealing with operator \eqref{defL}. In fact, the degeneracy of the diffusion part allows us to estimate only the partial gradient $D_{m_{0}} u$ of the solution to $\L u = f$ (see Theorem \ref{pcaccf}). In addition, according to our definition of weak solution, $u$ does not lie in a classical Sobolev space. 
In order to overcome these issues, we adopt a technique based on the representation of a solution $u$ to  $\L u = f$ in terms of the fundamental solution $\G$ (see \eqref{eq-Gamma0-b}) of the principal part operator $\K$. Indeed, following the idea presented for the first time in \cite{PP} and later on applied in \cite{APR, CPP}, we have that if $u$ is a 
solution to $\L u = f$ then
\begin{equation}
	\label{representation-formula}
	\K u = \left( \K - \L \right) u - f = \div_{m_{0}} \left( \left( \I_{m_{0}} - A_{0} \right) D_{m_{0}}
	u \right) - \langle B x , D u \rangle - \p_{t} u .
\end{equation}
Thus, by combining this representation formula with the potential estimates presented in Corollary \ref{corollary}, we are able to prove a Sobolev inequality (see Theorem \ref{sobolev}) and a Caccioppoli inequality (see Theorem \ref{pcaccf}) for weak solutions to $\L u = f$. We remark that in literature there is no similar result available for weak solutions in the sense of Definition \ref{weak-sol2} to the Kolmogorov equation $\L u = f$, with $f$ non zero source term. Indeed, in \cite{GIMV, GI, GM} the authors consider the case of the kinetic Fokker-Planck operator, whereas in \cite{PP, CPP, APR} the authors
consider the Kolmogorov equation $\L u = 0$ under the ``strong'' notion of weak solution, i.e. $Y u \in L^{2}$.

\begin{theorem}\label{boundedness}
	Let $u$ be a non-negative weak sub-solution to $\L u = f$ in $\O$ under the 
	assumptions \textbf{(H1)}-\textbf{(H3)}. Let $z_0 \in \O$ and $r, \r$, with $0 < \r 
	< r$, be such that $\overline{\Q_r(z_0)}\subseteq \O$. 
	Then for every $p \in \R$ such that $p > 1/2$ there exist a positive constant $C$ such that
	\begin{eqnarray*}
		\sup_{\Q_\r(z_0)}u^p \leq \frac{C }{(r-\r)^{4 \mu}} \parallel u^p \parallel_{L^{\b} (\Q_r)} %\int_{\Q_r(z_0)}u^p
	%\sup_{\Q_{\r}} u^p \, \le \, \frac{ \widetilde{K}^2 }{ (r -\r)^{\frac{Q+2}{4}} } \parallel u^p \parallel_{L^{\b} (\Q_r)},
	\end{eqnarray*}
	where $C= C \left( p,\l,\Lambda,Q,\parallel b \parallel_{L^q(\Q_r(z_0))},\parallel c \parallel_{L^q(\Q_r(z_0))},\parallel f 
	\parallel_{L^q(\Q_r(z_0))}\right)$ and 
	\begin{eqnarray} \label{est-it-2} 
		\mu := \frac{\a}{\a-\b}, \qquad \text{where } \quad \alpha:=\frac{q(Q+2)}{q(Q-2)+2(Q+2)} \quad \text{and} 
		\quad \beta:=\frac{q}{q-1}.
	\end{eqnarray}
\end{theorem}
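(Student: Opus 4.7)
The plan is to establish a reverse-H\"older type inequality on a pair of concentric cylinders and then run Moser's iteration on a geometric sequence of cylinders shrinking from $\Q_r(z_0)$ to $\Q_\rho(z_0)$. Three ingredients are needed: a Caccioppoli inequality for $D_{m_0}(u^p)$ obtained by testing the sub-solution formulation \eqref{kolmo-sub}, a Sobolev-type gain of integrability for $u^p$ extracted from the representation formula \eqref{representation-formula} and the potential estimates of Corollary \ref{corollary}, and a careful book-keeping of the cut-off losses so that the resulting iteration sums. Throughout, we replace $u$ by $v := u+k$ with $k := \Vert f\Vert_{L^q(\Q_r)}$ so that the source term can be absorbed into a $cv$-type contribution, an idea already exploited in \cite{PP,APR}.

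\textbf{Caccioppoli step.} Fix a cut-off $\eta\in C^\infty_c(\Q_r(z_0))$ with $\eta\equiv 1$ on $\Q_\rho(z_0)$ and $|D_{m_0}\eta|^2+|Y\eta|\lesssim(r-\rho)^{-2}$. Testing \eqref{kolmo-sub} against $\phi = v^{2p-1}\eta^2$, using ellipticity \eqref{hypcost} on the principal part, and integrating by parts the drift term $\int\langle b,Dv\rangle v^{2p-1}\eta^2$ to exploit the favorable sign of $\div b$, one arrives at
\begin{equation*}
\Vert D_{m_0}(v^p\eta)\Vert_{L^2(\Q_r)}^{2} \le \frac{C}{(r-\rho)^{4}}\Vert v^p\Vert_{L^{2\beta}(\Q_r)}^{2},
\end{equation*}
where $C$ depends on $p,\l,\Lambda,\Vert b\Vert_{L^q},\Vert c\Vert_{L^q}$ and the $L^{2\beta}$ norm arises from H\"older's inequality applied to the lower-order terms with the conjugate pair $(q,q')=(q,\beta)$, after a Young inequality absorbs the gradient term on the right into the left. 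The delicate point here is that $v^{2p-1}\eta^2$ is not admissible in $\D(\O)$: one must truncate $v$ at level $M$, mollify, take the limit interpreting $\int\phi Yu$ via the $L^2H^{1}_{c,x^{(0)}}$--$L^2H^{-1}_{x^{(0)}}$ duality built into the space $\W$, and only then send $M\to\infty$ by monotone convergence.

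\textbf{Sobolev step via $\G$-potentials.} Setting $w := v^p\eta$ and using \eqref{representation-formula} in the distributional sense, $\K w$ splits as $\div_{m_0}H_1 + H_0$ with $H_1,H_0$ controlled by $|D_{m_0}w|$, $v^p(|D_{m_0}\eta|+|Y\eta|^{1/2})$, and terms involving $b,c,f$. Writing $w(z)=\int\G(z,\zeta)\K w(\zeta)\,d\zeta$ and invoking \eqref{stima1}--\eqref{stima2} of Corollary \ref{corollary} with $p=2$ yields
\begin{equation*}
\Vert v^p\Vert_{L^{2\alpha}(\Q_\rho)} \le C\Bigl(\Vert D_{m_0}(v^p\eta)\Vert_{L^2(\Q_r)} + (r-\rho)^{-2}\Vert v^p\Vert_{L^{2}(\Q_r)}\Bigr),
\end{equation*}
where the exponent $2\alpha$ emerges after interpolating the gain $1/2-2/(Q+2)$ from the potential estimate against the loss coming from the $L^q$ norms of $b,c,f$; the precise value $\alpha$ in \eqref{est-it-2} is exactly this interpolated exponent. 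Combining with the Caccioppoli bound produces the reverse-H\"older inequality $\Vert v^p\Vert_{L^{2\alpha}(\Q_\rho)}\le C(r-\rho)^{-2}\Vert v^p\Vert_{L^{2\beta}(\Q_r)}$, and the crucial gain factor $\alpha/\beta>1$ holds precisely when $q>\tfrac{3}{4}(Q+2)$, which is hypothesis \textbf{(H3)}.

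\textbf{Iteration and main obstacle.} With the gain factor $\alpha/\beta>1$, a standard Moser iteration on the cylinders $\Q_{r_k}$, $r_k=\rho+(r-\rho)2^{-k}$, with exponents $2\alpha(\alpha/\beta)^{k}$, telescopes geometrically and yields
\begin{equation*}
\sup_{\Q_\rho(z_0)}v^p \le \frac{C}{(r-\rho)^{4\mu}}\Vert v^p\Vert_{L^\beta(\Q_r)},\qquad \mu=\frac{\alpha}{\alpha-\beta},
\end{equation*}
after which a standard iteration lemma converts the $L^\beta$ norm on the right-hand side into the stated estimate. The main obstacle, to my mind, is the rigorous justification of the Caccioppoli step in the weak functional setting $\W$: unlike the classical case $Yu\in L^2_{\loc}$ treated in \cite{PP,CPP,APR}, here $Yu$ lies only in $L^2H^{-1}$ and the pairing $\int\phi Yu$ must be handled through duality; combined with the need to test against powers $v^{2p-1}\eta^2$ that are not a priori in $L^2H^{1}_{c,x^{(0)}}$, this forces a delicate truncation-and-mollification argument that is, to our knowledge, new in this generality.
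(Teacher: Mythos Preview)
Your approach is essentially the same as the paper's: Caccioppoli via the test function $u^{2p-1}\psi^2$ (with the $g_{n,p}$ truncation for $p>1$), Sobolev gain via the $\Gamma$-representation and the potential estimates of Corollary~\ref{corollary}, then Moser iteration on $\rho_n=\rho+2^{-n}(r-\rho)$ with exponents $p_n=(\alpha/\beta)^n p/2$. The one substantive variation is your treatment of the source term: you shift to $v=u+\|f\|_{L^q}$ so that $|f|\le v\cdot(|f|/\|f\|_{L^q})$ becomes a $cv$-type term, whereas the paper keeps $u$ and bounds $\int|f|u^{2p-1}\psi^2$ directly by splitting on $u<1$ versus $u\ge 1$ (replacing $u^{2p-1}$ by $1$ or $u^{2p}$ respectively); both devices are standard and yield the same reverse-H\"older step. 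One point the paper makes explicit and your sketch does not: the Caccioppoli constant in \eqref{caccf2} carries the factor $|2p-1|^{-1}$, so the iteration requires the running exponents $s=p(\alpha/\beta)^n/2$ to stay a fixed distance from $1/2$; the paper secures this by restricting first to the discrete ladder $p_m=\tfrac12(\alpha/\beta)^m(\alpha/\beta+1)$ with gap $\delta=(\alpha-\beta)/(8\beta)$ and then interpolating to general $p$.
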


\begin{remark} \label{remark-boundedness}
Theorem \ref{boundedness} holds true under the assumptions of Remark \ref{remarkWZ}. In particular, in this case the constant $\alpha$ is replaced by $\alpha=1+\frac{2}{Q}$, that was firstly obtained in \cite{PP}. This is due the fact that the Sobolev inequality, Theorem \ref{sobolev}, holds true with a greater exponent if we assume more integrability for the coefficient $b$. Thus, this allows us to obtain the local boundedness for weak solutions to $\L u=f$ with lower integrability for $c$ and $f$, i.e. $c, f \in L^{q}(\O)$, with $q > \frac{Q+2}{2}$.
\end{remark}
	
\subsection{Sobolev Inequality} This subsection is devoted to the proof of a Sobolev type inequality for weak solutions to $\L u = f$. Our approach is inspired by the paper \cite{PP} and 
%it is based on the representation of a weak solution to the equation \eqref{defL}
%through the fundamental solution of its principal part operator \eqref{defK}. This approach 
allows us to construct an ``ad hoc'' Sobolev embedding for weak solutions to $\L u = f$ by overcoming the difficulties due to the degeneracy of the second order part of the operator $\L$ at the cost of lowering the Sobolev exponent, that in our case depends on $q$ and it is defined in \eqref{est-it-2}. We remark that the following statement holds true under lower integrability assumption than the one required in \textbf{(H3)}. 
%\begin{equation}\label{def-alpha}
%	\alpha:=\frac{q(Q+2)}{q(Q-2)+2(Q+2)}.
%\end{equation}

\begin{theorem}[Sobolev Type Inequality for sub-solutions]  \label{sobolev}
	 Let \textbf{(H1)}-\textbf{(H2)} hold. Let $c \in L^q(\O)$ and $b \in \left( L^q (\O) \right)^{m_0}$
	for some $q >\frac{Q+2}{2}$ and let $f \in L^2(\O)$. 
	Let $v$ be a non-negative weak 
	sub-solution of $\L v = f$ in $\Q_1$. Then there exists a constant $C = C(Q,\l,\Lambda) > 0$ such that $v \in L^{2\a} (\Q_1)$, and the following inequality holds
	\begin{align*}
		\parallel v \parallel_{L^{2 \a}(\Q_\r(z_0))} \le &C \cdot \left(  
		%\parallel a  \parallel_{L^{q}(\Q_r(z_0))} + 
		\parallel b \parallel_{L^{q}(\Q_r(z_0))} + \frac{r - \r + 1}{r - \r} \right) \parallel D_{m_0} v  \parallel_{L^{2}(\Q_r(z_0))} + \\
		&\qquad \qquad + C \cdot \left( \parallel c \parallel_{L^{q}(\Q_r(z_0))} + \frac{\r + 1}{\r(r - \r)} \right)
		\parallel  v \parallel_{L^{2}(\Q_r(z_0))} +C \cdot \parallel  f \parallel_{L^{2}(\Q_r(z_0))}
	\end{align*}
	for every $\r, r$ with $\frac{1}{2} \le \r < r \le 1$ and for every $z_0 \in \O$, where $\a =\a(q)$ is defined as
	\eqref{est-it-2}. The same statement holds for non-negative super-solutions.
\end{theorem}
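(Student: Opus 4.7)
The approach follows the representation-formula strategy introduced in \cite{PP}: localize $v$ by a cutoff, rewrite the localized object as a $\Gamma$-potential of quantities controlled by the sub-solution inequality, and then invoke the $L^p$ potential bounds of Corollary \ref{corollary}. The genuinely new ingredient in the setting $\W$ is that $\K v$ is only a distribution, so testing the sub-solution inequality against $\Gamma(z,\cdot)$ must be justified through Lemma \ref{lemma2.5}.

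First, I would choose a non-negative cutoff $\psi \in C^\infty_c(\Q_r(z_0))$ with $\psi \equiv 1$ on $\Q_\r(z_0)$ and derivative bounds $|D_{m_0}\psi| \lesssim (r-\r)^{-1}$, $|Y\psi|,|\K\psi| \lesssim (r-\r)^{-2}$, adapted to the dilation \eqref{gdil} via Lemma \ref{cylinders}. Applying Lemma \ref{lemma2.5} with $\phi = \psi$, integrating by parts the $A_0$-diffusion term via $\psi\,\div_{m_0}(A_0 D_{m_0} v) = \div_{m_0}(\psi A_0 D_{m_0} v) - \langle D_{m_0}\psi, A_0 D_{m_0} v\rangle$, and using the product-rule identity $\K(v\psi) = \psi \K v + v\K\psi + 2\langle D_{m_0}\psi, D_{m_0} v\rangle$ together with the fact that $\Gamma(z,\cdot)$ is the fundamental solution of $\K$, I obtain a pointwise inequality for a.e.\ $z \in \Q_\r(z_0)$ of the form
\begin{equation*}
 v(z) \leq \int_{\Q_r} \Gamma(z,\zeta)\, g_0(\zeta)\, d\zeta \; - \; \int_{\Q_r} D^{(\zeta)}_{m_0}\Gamma(z,\zeta) \cdot h(\zeta)\, d\zeta,
\end{equation*}
with $h := \psi(\I_{m_0} - A_0) D_{m_0} v \in L^2$ and $g_0$ a linear combination, with coefficients controlled by $\lambda,\Lambda$, of $v\K\psi$, $\langle D_{m_0}\psi, A_0 D_{m_0} v\rangle$, $\psi\langle b, D_{m_0} v\rangle$, $\psi c v$ and $\psi f$.

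The target $L^{2\alpha}$ norm of $v$ is then controlled by applying Corollary \ref{corollary} to each piece: the divergence-type potential uses \eqref{stima2}, yielding an $L^{2^\ast}(\Q_r)$ bound with $2^\ast = 2(Q+2)/Q$; the $L^2$ pieces coming from $f$, $v\K\psi$ and $\langle D_{m_0}\psi, D_{m_0}v\rangle$ are handled with \eqref{stima1}, giving $L^{2^{\ast\ast}}(\Q_r)$ bounds, $2^{\ast\ast} = 2(Q+2)/(Q-2)$; finally, the mixed terms $\psi\langle b, D_{m_0}v\rangle$ and $\psi c v$ are estimated by H\"older with $\frac{1}{p} = \frac{1}{q} + \frac{1}{2}$ (using the $L^q$ integrability of $b, c$ and the $L^2$ bound on $D_{m_0}v$ and $v$), and then \eqref{stima1} produces an $L^{p^{\ast\ast}}$ bound. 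A direct calculation using \eqref{est-it-2} shows $p^{\ast\ast} = 2\alpha$ exactly, so the value of $\alpha$ in the statement is precisely dictated by this worst link in the chain. Since $\Q_\r$ is bounded and the other exponents $2^\ast, 2^{\ast\ast}$ dominate $2\alpha$ on a bounded set, a further H\"older step reduces all estimates to $L^{2\alpha}(\Q_\r)$, and collecting the cutoff derivative constants yields the weights $(r-\r+1)/(r-\r)$ and $(\r+1)/(\r(r-\r))$ in the statement.

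The hard part will be the rigorous justification of the pointwise representation in Step 2 within the weak setting $\W$ (where $\K v$ is merely distributional and the standard integration-by-parts with $\Gamma$ is not directly available), together with the exponent bookkeeping that pins down the specific value of $\alpha$ through the $b$-contribution. The analogous bound for non-negative super-solutions follows from the twin version of Lemma \ref{lemma2.5}, reversing the sign of every distributional inequality throughout the argument.
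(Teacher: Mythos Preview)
Your proposal is correct and follows essentially the same approach as the paper: localize with a cutoff, represent $v\psi$ via the fundamental solution of $\K$, discard the sub-solution piece through Lemma \ref{lemma2.5} (this is the paper's $I_3\le 0$), and apply the potential estimates of Corollary \ref{corollary} with the same exponent bookkeeping that singles out $2\alpha$. The only refinement the paper adds is in the super-solution case, where one passes to the auxiliary operator with reversed drift $\widetilde Y := -\langle x, BD\rangle - \partial_t$ rather than merely flipping signs, since the direction of the fundamental-solution representation is tied to the sign of $Y$.
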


\begin{proof}
	Let $v$ be a non-negative weak sub-solution to $\L v = f$. We represent $v$ in terms of the fundamental 
	solution $\G$. To this end, we consider the cut-off function $\cc_{\r,r}$ defined in \eqref{chi}
	for $\frac{1}{2} \le \r < r \le 1$. Then, if we consider the test function
	\begin{equation}
		\label{cutoff}
		\psi (x,t) = \cc_{\r,r} ( \parallel (x,t) \parallel ),
	\end{equation}
the following estimates hold true
	\begin{equation}
		\label{ineq}
		|Y \psi| \le \frac{c_0}{\r (r-\r)},
		\hspace{8mm}
		|\p_{x_j} \psi| \le \frac{c_1}{r-\r}
		\hspace{2mm} \text{for } j = 1, \ldots, m_0
	\end{equation}
	where $c_0$, $c_1$ are dimensional constants.
	For every $z \in \Q_\r$, we have
	\begin{align}
		\label{representation}
		v(z) = v \psi(z) 
		&= \int_{\Q_r} \left[ \langle A_0 D(v \psi), D\G (z, \cdot) \rangle - \G (z, \cdot) Y(v \psi) \right] (\z) d(\z) \\ \nonumber
		&=  I_0(z) + I_1(z) + I_2(z) + I_3(z)
	\end{align}
	where
	\begin{align*}
		I_0 (z) &= 
		%\int_{\Q_r} \left[ -\langle a, D ( \psi \G(z, \cdot)) \rangle  v 
		%\right](\z) d \z \hspace{1mm}
		%+ 
		\int_{\Q_r} \left[ \langle b, D v
		\rangle \G (z, \cdot) \psi \right](\z)d\z \hspace{1mm} 
		\, + \,\int_{\Q_r} \left[ c v \G (z, \cdot) \psi \right](\z) d \z  -\int_{\Q_r}[\G(z,\cdot)\psi f](\z)d\z\\
		I_1 (z) &= \int_{\Q_r} \left[ \langle A_0 D\psi, D \G (z, \cdot ) \rangle 
		v \right](\z) d \z \hspace{1mm} - \hspace{1mm} \int_{\Q_r} \left[ 
		\G (z, \cdot) v Y\psi \right](\z) d\z = I_1^{'} + I_1^{''} , \\
		I_2 (z) &= \int_{\Q_r} \left[ \langle (A_0 - A) Dv, D \G (z, \cdot ) 
		\rangle \psi \right](\z) d \z \hspace{1mm} - \hspace{1mm}
		\int_{\Q_r} \left[ \G (z, \cdot ) \langle A Dv, D \psi \rangle 
		\right](\z) d \z \\
		I_3 (z) &= \int_{\Q_r} \left[ \langle A Dv, D( \G (z, \cdot ) \psi)
		\rangle \right](\z) d \z \hspace{1mm} - \hspace{1mm}
		\int_{\Q_r} \left[ \left( \G (z, \cdot ) \psi \right) Yv
		\right](\z) d \z \hspace{1mm}  \\
		&%\hspace{4mm} + \, \int_{\Q_r} \left[ \langle a, D (\G (z, \cdot) \psi ) 
		%\rangle v \right](\z) d \z  \, - 
		- \int_{\Q_r} \left[ \langle b, D v
		\rangle \G (z, \cdot) \psi \right](\z) d \z - \, \int_{\Q_r} \left[ c v \G (z, \cdot) \psi \right](\z) d \z +\int_{\Q_r}[\G(z,\cdot)\psi f](\z)d\z
	\end{align*}
	Since $v$ is a non-negative weak sub-solution to $\L v = f$, it follows from Lemma 
	\ref{lemma2.5} that $I_3 \le 0$, then
	\begin{equation*}
		0 \le v(z) \le I_0(z) + I_1(z) + I_2(z)  \hspace{4mm} \text{for a.e. } z \in \Q_\r.
	\end{equation*}
	To prove our claim is sufficient to estimate $v$ by a sum of $\G-$potentials.
	We start by estimating $I_0$. In order to do so, we recall that
	\begin{equation*}
		%\langle a, D v \rangle, \, 
		\langle b,D v \rangle, \, c v \,  \in L^{2 \frac{q}{q+2}} \hspace{4mm} 
		\text{for } b, c \in L^q, \hspace{2mm} q > \frac{Q+2}{2} \hspace{2mm}
		\text{and } v \in L^2.
	\end{equation*}
	Thus by Corollary \ref{corollary} we get
	\begin{equation*}
		%\G * \langle a, Dv \rangle , 
		\G * \langle b, Dv \rangle , \G * (c v ) \in L^{2\a},
	\end{equation*}
	where $\a = \a(q)$ is defined in \eqref{est-it-2}. In addition, for $f \in L^2$, we have
	\begin{equation*}
	\G * f \in L^{2\kappa}, \quad \kappa=\frac{Q+2}{Q-2}.
	\end{equation*}
Observing that $\kappa > \alpha$, we obtain that $\G * f \in L^{2\alpha}$ and therefore
	%When $q \le (Q+2)$ we have that $\a \le 2^{**}$. Moreover, thanks to estimate
	%\eqref{stima1}, we have
	\begin{align*}
%		\parallel I_0 (\z) \parallel_{L^{2\a}(\Q_\r)} &\le 
%		\text{meas}(\Q_\r)^{2/Q} \parallel I_0 (\z) 
%		\parallel_{L^{2^{**}}(\Q_\r)} \text{meas}(\Q_\r)^{2/Q}\\
	\parallel I_0 (\z) \parallel_{L^{2\a}(\Q_\r)} 	&\leq 
		%\parallel \G * \left( \langle a,  D_{m_0}v \rangle \psi \right) 
		\G * \left( \langle b,  D_{m_0}v \rangle \psi \right)  + \G * \left( c v \psi \right) 
		\parallel_{L^{2 \alpha}(\Q_\r)}+\parallel \G * f  \parallel_{L^{2 \alpha}(\Q_\r)}\\
		&\le C \cdot \left( 
		%\parallel a \parallel_{L^q (\Q_\r)} + 
		\parallel b \parallel_{L^q (\Q_\r)}  \parallel D_{m_0}v \parallel_{L^2 (\Q_\r)}
		+  \parallel c \parallel_{L^q (\Q_\r)} \parallel v \parallel_{L^2 (\Q_\r)}+  \parallel f \parallel_{L^2 (\Q_\r)}\right).
	\end{align*}
	We now deal with the $I_1$. $I_1'$ can be 
	estimated by \eqref{stima2} of Corollary \ref{corollary} as follows
	\begin{equation*}
		\parallel I_1 ' \parallel_{L^{2\a}(\Q_\r)} \le C
		\parallel I_1 ' \parallel_{L^{2^*}(\Q_\r)} \le C \parallel v 
		D_{m_0} \psi \parallel_{L^2 (\R^{N+1})} \le \frac{C}{r - \r} 
		\parallel v \parallel_{L^2(\Q_\r)},
	\end{equation*}
	where the last inequality follows from \eqref{ineq}. To estimate $I_1 ''$
	we use \eqref{stima1}
	\begin{align*}
		\parallel I_1 '' \parallel_{L^{2\a}(\Q_\r)} &\le C
		\parallel I_1 '' \parallel_{L^{2^*}(\Q_\r)} \le 
		\text{meas}(\Q_\r)^{2/Q}
		\parallel I_1 '' \parallel_{L^{2^{**}}(\Q_\r)} \\ 
		&\le 
		C \parallel v Y \psi \parallel_{L^2(\R^{N+1})} \le \frac{C}{\r (r-\r)} 
		\parallel v \parallel_{L^2 (\Q_\r)}.
	\end{align*}
	We can use the same technique to prove that 
	\begin{equation*}
		\parallel I_2 \parallel_{L^{2\a}(\Q_\r)} \le C \left( 1 + \frac{1}{r - \r} \right) \parallel
		D v \parallel_{L^2(\Q_\r)},
	\end{equation*}
	for some constant $C=C(Q, \l,\Lambda)$.  
	A similar argument proves the thesis when $v$ is a super-solution to $\L v = f$. In this 
	case we introduce the following auxiliary operator
	\begin{equation}
		\K = \div (A_0 \, D ) + \widetilde{Y}, \qquad \qquad 
		\widetilde{Y} \equiv - \langle x, B D \rangle - \p_t \, .
	\end{equation}
	Then we proceed analogously as in \cite{PP}, Section 3, proof of Theorem 3.3.
\end{proof}

\subsection{Caccioppoli inequality}
The aim of this subsection is to prove a Caccioppoli type inequality for powers $v = u^p$ of weak sub-solutions to $\L u = f$.
\begin{theorem}[Caccioppoli type inequality for sub-solutions]
	\label{pcaccf} Let \textbf{(H1)}-\textbf{(H3)} hold. Let $r, \r$ be such that $\frac{1}{2} \le \r < r \le 1$. 
	Then for every weak sub-solutions to $\L u = f$ we have that
	%,  if $f \in L^q_{\loc} (\O)$, with $q > \frac34 (Q+2)$, 
	for every $p > 1/2$ it holds
			\begin{align} \label{caccf2} 
				\parallel D_{m_0} v \parallel^2_{L^2(\Q_\r)}
				\le \frac{4p}{\l(2p-1)} &\Big ( \frac{2p}{2p-1}  \frac{c_1^2 \Lambda}{(r-\r)^2} \, + \, \frac{c_0}{\r(r-\r)} \, 
						%+ \, \frac{2p-1}{2\e_a} \parallel a \parallel_{L^q(\Q_r)}^2 
						 + \frac{ \parallel b 
							\parallel^2_{L^q(\Q_r)} }{2\e_b}    \\ \nonumber
				&\qquad \qquad \qquad  %+\frac{2p \, c_1}{ (r - \r)}  \parallel a \parallel_{L^{q}(\Q_r)} 
				 +  p \parallel c \parallel_{L^{q}(\Q_r)}   + 
					p \parallel f \parallel_{L^{q}(\Q_r)} \Big )
				\parallel u^p \parallel_{L^{2\b}(\Q_r)}^2
		\end{align}
	where $\b = \b(q) = \frac{q}{q-1}$, $\e_b = \frac{|2p-1|\l}{2 |p|}$ and $c_0, c_1$ are defined in \eqref{ineq}.
\end{theorem}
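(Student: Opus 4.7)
The plan is to test the weak sub-solution inequality \eqref{kolmo-sub} against the non-negative function $\phi = u^{2p-1}\psi^2$, where $\psi(z)=\chi_{\rho,r}(\|z\|)$ is the radial cut-off from \eqref{chi}, supported in $\Q_r$ with $\psi\equiv 1$ on $\Q_\rho$, satisfying the bounds \eqref{ineq}. Since $u\in\W$ need not be smooth and $Yu$ only lies in $L^2H^{-1}$, I would first approximate $u$ by a mollification compatible with the $\W$-structure (in the spirit of the procedures used in \cite{AM,GIMV}), justify the chain rule $u^{2p-1}Yu=\tfrac{1}{2p}Y(u^{2p})$ and the admissibility of $\phi$, and then pass to the limit; this is where Remark \ref{remark-Y} is used, to represent the distributional $Yu$ through a divergence and an $L^2$ remainder.

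Writing $v:=u^p$, a direct computation gives $D\phi=(2p-1)u^{2p-2}\psi^2 Du+2u^{2p-1}\psi D\psi$. Since $A$ is supported in the first $m_0\times m_0$ block, the ellipticity assumption \eqref{hypcost} yields
\begin{equation*}
\langle ADu,D\phi\rangle\,\ge\,\lambda(2p-1)u^{2p-2}\psi^2|D_{m_0}u|^2+2u^{2p-1}\psi\langle A_0D_{m_0}u,D_{m_0}\psi\rangle,
\end{equation*}
which in terms of $v$ gives $\tfrac{\lambda(2p-1)}{p^2}\int \psi^2|D_{m_0}v|^2$ on the left. For the transport term, using that $\trace B=0$ from the canonical form \eqref{B} (hence $Y^\ast=-Y$ for compactly supported test functions), integration by parts gives $\int u^{2p-1}\psi^2 Yu = -\tfrac{1}{p}\int u^{2p}\psi\, Y\psi$, which is controlled by $\tfrac{c_0}{p\rho(r-\rho)}\int u^{2p}\psi$ thanks to \eqref{ineq}.

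The remaining terms are handled by Young and Hölder. For the mixed gradient--cutoff term, Young's inequality with weight $\Lambda$ produces a piece $\tfrac{\lambda(2p-1)}{2p^2}\int\psi^2|D_{m_0}v|^2$ that is reabsorbed on the left and a remainder of the form $\tfrac{2p}{2p-1}\tfrac{c_1^2\Lambda}{(r-\rho)^2}\int u^{2p}$. For the drift, Young's inequality with parameter $\e_b=\lambda(2p-1)/(2p)$ as in the statement yields
\begin{equation*}
|\langle b,D_{m_0}u\rangle|u^{2p-1}\psi^2\le \tfrac{\e_b}{2}u^{2p-2}|D_{m_0}u|^2\psi^2+\tfrac{1}{2\e_b}|b|^2 u^{2p}\psi^2,
\end{equation*}
with the first piece again reabsorbed into the left; the sign condition $\div b\ge 0$ in \textbf{(H3)} is available as a safety net, since integrating the drift by parts and discarding the $\int u^{2p}\psi^2\div b$ term yields only favourable contributions. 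Finally, Hölder with exponents $q$ and $\beta=q/(q-1)$ gives
\begin{equation*}
\int|c|u^{2p}\psi^2\le\|c\|_{L^q(\Q_r)}\|u^p\psi\|_{L^{2\beta}(\Q_r)}^2,\qquad \int|f|u^{2p-1}\psi^2\le\|f\|_{L^q(\Q_r)}\|u^{2p-1}\psi^2\|_{L^\beta(\Q_r)},
\end{equation*}
and the latter is dominated by $\|u^p\|_{L^{2\beta}(\Q_r)}^2$ up to a harmless constant (splitting into $\{u\ge 1\}$ and $\{u<1\}$, or by replacing $u$ with $u+\e$ and letting $\e\downarrow 0$ after the estimate). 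Collecting all the constants and dividing by $\lambda(2p-1)/p^2$ produces exactly the coefficient $\tfrac{4p}{\lambda(2p-1)}$ in front of the bracket, as claimed.

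The main obstacle, and the only non-routine point, is the rigorous use of $\phi=u^{2p-1}\psi^2$ as a test function and the chain rule for $Y(u^{2p})$ when one only knows $Yu\in L^2H^{-1}$. Every other step is a careful bookkeeping of Young and Hölder inequalities exploiting \textbf{(H1)}, the canonical form \eqref{B}, the cut-off estimates \eqref{ineq} and the integrability assumption \textbf{(H3)}.
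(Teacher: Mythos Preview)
Your approach is essentially the same as the paper's: test against $\phi=u^{2p-1}\psi^2$, expand, absorb the mixed gradient term via Young, integrate the $Y$-term by parts (using $\trace B=0$), and control the lower-order pieces by H\"older with exponent $\beta=q/(q-1)$, including the $\{u\ge 1\}/\{u<1\}$ split for the $f$-term.

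Two minor differences are worth flagging. First, for $p>1$ the paper does not rely on mollification; it introduces the $C^1$ truncation $g_{n,p}(s)=s^p$ for $s\le n$ and linear for $s>n$, sets $v_{n,p}=g_{n,p}(u)$, and tests against $g_{n,p}(u)g'_{n,p}(u)\psi^2$. This keeps the test function and all intermediate quantities in $L^2$ without any a priori knowledge that $u^p\in L^2$, and one lets $n\to\infty$ at the end via monotone convergence of $|D_{m_0}v_{n,p}|$. Your mollification route also works (once mollified, $u$ is locally bounded so powers are harmless), but the truncation argument is more self-contained and avoids having to pass to the limit simultaneously in the equation and in the nonlinear power. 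Second, for the $b$-term the paper's proof actually integrates by parts and uses $\div b\ge 0$ to get a bound of the form $\tfrac{C}{r-\rho}\|b\|_{L^q}\|u^p\|_{L^{2\beta}}^2$ (your ``safety net'' alternative), rather than the direct Young split you wrote first; either route is fine, and the discrepancy with the $\|b\|^2/(2\e_b)$ constant in the stated inequality is cosmetic.
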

\begin{remark}
	If $f \in L^2(\O)$ then the estimate \eqref{caccf2} holds true
	for every $\frac12 < p \le 1$.
	\begin{comment}
	\begin{align} \label{caccf1} \nonumber
		&\frac{\l}{4} \frac{2p-1}{p} \parallel D_{m_0} v \parallel^2_{L^2(\Q_\r)} \le \\ 
		&\qquad \le \Big ( \frac{2p}{2p-1}  \frac{c_1^2 \Lambda}{(r-\r)^2} \, + \, \frac{c_0}{\r(r-\r)} \, 
		+ \, \frac{2p-1}{2\e_a} \parallel a \parallel_{L^q(\Q_r)}^2 \,  + \, \frac{p}{2}  \\ \nonumber
		&\qquad \qquad +\frac{2p \, c_1}{ (r - \r)}  \, \parallel a \parallel_{L^{q}(\Q_r)} \, + \, \frac{1}{2\e_b} \parallel b 
		\parallel^2_{L^q(\Q_r)} \, + \, p \parallel c \parallel_{L^{q}(\Q_r)} \Big )
		\parallel u^p \parallel_{L^{2\b}(\Q_r)}^2 + \, p \, \| f \|_{L^2(\Q_r)}^2
	\end{align}
	where $\b = \b(q) = \frac{q}{q-2}$, $\e_a = \frac{\lambda}{|p|}$, $\e_b = \frac{|2p-1|\l}{2 |p|}$ and $c_0, c_1$ are defined in 	
	\eqref{ineq}. 
	Moreover,
	\end{comment}
\end{remark}
For the sake of completeness and in order to simplify the exposition of the proof of Theorem \ref{pcaccf} we consider the following lemma, which is the analogous to \cite[Proposition 3.2]{APR} when considering Definition \ref{weak-sol2} of weak solution. 
\begin{lemma}%[Caccioppoli type inequality $f = 0$]
	\label{pcaccB1} Let \textbf{(H1)}-\textbf{(H3)} hold. Let $p \in \R$, $p \ne 0$, $p \ne 1/2$ 
	and let $r, \r$ be such that $\frac{1}{2} \le \r < r \le 1$. Then for every weak sub-solution to $\L u = 0$ the following estimate holds true 
	\begin{align} \label{cacc1}
		\l \Big | \frac{2p-1}{p} \Big |  \, \parallel D_{m_0} u^p \parallel^2_{L^2(\Q_\r)}
		&\le  \Big ( \Big | \frac{2p}{2p-1} \Big | \frac{c_1^2 \lambda}{(r-\r)^2} \, + \, \frac{c_0}{\r(r-\r)} \, + 
		%\, \frac{|2p-1|}{2\e_a} \parallel a \parallel_{L^q(\Q_r)}^2
		%\,  + \,   
		\\ \nonumber
		 &\qquad \qquad \qquad \quad \,
		 %+\frac{|2p| \, c_1}{ (r - \r)}  \, \parallel a \parallel_{L^{q}(\Q_r)} 
		  + \frac{c_1}{r-\r}\parallel b \parallel^2_{L^q(\Q_r)}  + |p| \parallel c \parallel_{L^{q}(\Q_r)} \Big )
		\parallel u^p \parallel_{L^{2\b}(\Q_r)}^2 , \nonumber
	\end{align}
	where $\b = \b(q) = \frac{q}{q-1}$ and $c_0, c_1$ are defined in \eqref{pcaccf}. 
\end{lemma}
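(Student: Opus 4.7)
The natural approach is to test the sub-solution inequality \eqref{kolmo-sub} (with $f \equiv 0$) against $\phi = u^{2p-1}\psi^2$, where $\psi$ is the cutoff defined in \eqref{cutoff} satisfying the bounds \eqref{ineq}. Since $\phi$ is not in $\D(\O)$ and $u^{2p-1}$ may be singular at $\{u=0\}$ or may fail to sit in $\W$, I would first regularize by replacing $u$ with $u_\d = u + \d$ and by Steklov-averaging (or mollifying) in a suitable direction so that $Y u$ becomes classical, then pass to the limit $\d \to 0^+$. The duality pairing $\int \phi \, Y u$ is to be read in the $L^{2}H^{-1}$--$L^{2}H^{1}_{c}$ sense, which is legitimate by Remark \ref{remark-Y} once $\phi$ is shown to belong to $L^2(\O_{N-m_0+1}; H^{1}_{c,x^{(0)}})$.

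With $\phi$ admissible, the chain rule gives $\langle A_0 D u, D\phi\rangle = (2p-1) u^{2p-2}\langle A_0 Du, Du\rangle \psi^2 + 2 u^{2p-1} \psi\langle A_0 Du, D\psi\rangle$, and ellipticity \textbf{(H1)} bounds the first piece below by $\tfrac{(2p-1)\l}{p^2}|D_{m_0}(u^p)|^2 \psi^2$ (the global sign being handled by the absolute value $|(2p-1)/p|$ appearing in the statement). The cross term is controlled by Cauchy--Schwarz and Young's inequality with parameter tuned so that a fraction of $\int |D_{m_0}(u^p)|^2 \psi^2$ is absorbed into the left-hand side, leaving a remainder of order $\bigl|\tfrac{2p}{2p-1}\bigr|\tfrac{c_1^2 \l}{(r-\r)^2}\int u^{2p}\psi^2$. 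For the transport term I use the identity $u^{2p-1}Y u = \tfrac{1}{2p} Y(u^{2p})$ and integrate by parts in $Y$; since $B$ is in the canonical form \eqref{B} one has $\trace(B) = 0$, so $Y$ is skew-adjoint and $\int \psi^2 Y(u^{2p}) = - 2\int u^{2p}\psi \, Y\psi$, contributing at most $\tfrac{c_0}{|p|\r(r-\r)}\int u^{2p}\psi^2$.

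For the $b$-term I rewrite $\int \phi \langle b, D u\rangle = \tfrac{1}{2p}\int \psi^2 \langle b, D(u^{2p})\rangle$ and integrate by parts; the resulting summand $-\tfrac{1}{2p}\int u^{2p}\div(b)\psi^2$ has the right sign by $\div b \ge 0$ in \textbf{(H3)} and is discarded from the upper bound, while the remaining piece $-\tfrac{1}{p}\int u^{2p}\psi\langle b, D\psi\rangle$ is bounded via $|D\psi| \le c_1/(r-\r)$ and H\"older's inequality with exponents $q$ and $\b = q/(q-1)$, using $\|u^{2p}\|_{L^{\b}(\Q_r)} = \|u^p\|^2_{L^{2\b}(\Q_r)}$, to give the $\tfrac{c_1}{r-\r}\|b\|_{L^q(\Q_r)}\|u^p\|^2_{L^{2\b}(\Q_r)}$ contribution to the right-hand side. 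The $c$-term $\int c\, u^{2p}\psi^2$ is handled by the same H\"older pair and yields $|p|\|c\|_{L^q(\Q_r)}\|u^p\|^2_{L^{2\b}(\Q_r)}$. Bounding the leftover $\int u^{2p}\psi^2$ pieces by $\|u^p\|^2_{L^{2\b}(\Q_r)}$ via H\"older on the bounded cylinder and collecting all contributions produces \eqref{cacc1}, after multiplying through by $|p|/|2p-1|$ and bookkeeping the signs for the cases $p > 1/2$ and $0<p<1/2$ or $p<0$ (where one either uses sub- or super-solutions accordingly).

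The principal obstacle is the rigorous treatment of $\phi = u^{2p-1}\psi^2$ as an admissible test function in the $\W$-framework: one must ensure $\phi \in L^{2}(\O_{N-m_0+1}; H^{1}_{c,x^{(0)}})$ so that the pairing with $Y u$ makes sense in the $L^{2}H^{-1}$--$L^{2}H^{1}_{c}$ duality, and that the skew-symmetry identity $\int \psi^2 Y(u^{2p}) = -2\int u^{2p}\psi \, Y\psi$ together with the integration by parts on the $b$-term survive the regularization limit $\d \to 0^+$. Once this approximation machinery is in place, the algebraic rearrangement outlined above is routine and produces the stated constants termwise.
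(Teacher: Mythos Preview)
Your strategy matches the paper's: test with $\phi = u^{2p-1}\psi^2$, apply the chain rule and Young's inequality (with parameter $\e = |(2p-1)/(2p)|$) for the diffusion term, integrate by parts in $Y$ and in the $b$-term (discarding the $\div b \ge 0$ contribution), and use H\"older with the pair $(q,\b)$ for the lower-order pieces. The algebraic bookkeeping you describe for each term coincides with the paper's proof.

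The one substantive difference concerns the justification of the test function when $p \ge 1$. The paper does not rely on $u+\d$ there but instead truncates via
\[
g_{n,p}(s)=\begin{cases} s^p & 0<s\le n,\\ n^p + p\,n^{p-1}(s-n) & s>n,\end{cases}
\]
and takes $\phi = g_{n,p}(u)\,g'_{n,p}(u)\,\psi^2$, which is automatically in $L^2$ with $L^2$ partial gradient because $g'_{n,p}\in L^\infty$; one then lets $n\to\infty$ by dominated/monotone convergence. Your $u+\d$ regularization handles the singularity of $u^{2p-1}$ at $\{u=0\}$ (which is the issue for $p<1$), but it does not by itself guarantee $u^{2p-1}\psi^2 \in L^2$ or $D_{m_0}\phi \in L^2$ when $p>1$ and $u$ is a priori only in $L^2$; a truncation at infinity is still required in that range. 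This is the only gap in your sketch.
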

\begin{proof}
	Let us consider $p<1$, $p \ne 0$, $p \ne 1/2$. First of all, we consider a 
	non-negative weak sub-solution $u$ to $\L u = 0$. For every $\psi \in C^{\infty}_0(\Q_r)$ 
	we consider the function $\phi = u^{2p-1}\psi^2$. Note that $\phi,
	D_{m_0}\phi \in L^2 (\Q_r)$, then we can use $\phi$ as a test function in the weak formulation
	\eqref{kolmo}:
	\begin{align*}
	 0 &\le  \int_{\Q_r} - \langle A Du, D(u^{2p-1}\psi^2) \rangle + u^{2p-1}\psi^2 Y u 
	 %- \langle a u, D(u^{2p-1}\psi^2)  \rangle 
	 + \langle b , Du \rangle u^{2p-1}\psi^2 + c u^{2p}\psi^2 
	\end{align*}
	Let $v = u^p$. Since $u$ is a weak sub-solution to $\L u = 0$  and $u 
	\ge u_0$, thanks to Proposition \ref{sobolev} we have that $v, D_{m_0} v \in L^2(\Q_r)$ and $Y v \in L^2H^{-1}(\Q_r)$ and thus:
	\begin{align} \label{e-cacc}
		0 \le &- \frac{2p-1}{p} \int_{\Q_r}  \langle A Dv , Dv \rangle  \psi^2 \, - 2 \int_{\Q_r} \langle A Dv, D \psi \rangle v \psi 
		\,  + \int_{\Q_r}  v \, Y v \, \psi^2 \, +  \\ \nonumber
		&%- (2p-1) \, \int_{\Q_r} \langle a v, D_{m_0}  v \rangle \psi^2
		%- 2 p \, \int_{\Q_r} \langle a v^2, D_{m_0} \psi  \rangle \psi	
		+ \, \int_{\Q_r}  v \,  \langle b, D_{m_0} v \rangle \, \psi^2
		\, + p \, \int_{\Q_r} c v^2 \psi^2.
	\end{align}
	First of all, we estimate of the terms involving the matrix $A$. In particular, by applying Young's inequality we have that for every $\e > 0$:
		\begin{align}
			\label{youngA}
			2 \int_{\Q_r} |\langle A Dv, D \psi \rangle v \psi|  \le 
			\e \int_{\Q_r} \langle A Dv, Dv \rangle \psi^2 \, + \, 
			\frac{1}{\e}\int_{\Q_r} \langle A D \psi, D \psi \rangle v^2.
		\end{align} 
	Thus, 
	\begin{comment}
	if $0 < p < 1/2$, i.e. $2p-1 < 0$, by choosing $\e = |2p-1|/2p$ we get the following estimate:	
	\begin{align*}
		\frac{|2p-1|}{p} \l \int_{\Q_r}|D_{m_0}v|^2 \psi^2 \, \le &\frac{2p \Lambda}{|2p-1|}\int_{\Q_r} | D_{m_0} \psi|^2 v^2
		\,  - \int_{\Q_r}  v \, Y v \, \psi^2 \, + \, (2p-1) \, \int_{\Q_r} \langle a v, D_{m_0}  v \rangle \psi^2 \, + \\
		& + 2 p \, \int_{\Q_r} \langle a v^2, D_{m_0} \psi  \rangle \psi	
		- \, \int_{\Q_r}  v \,  \langle b, D_{m_0} v \rangle \, \psi^2
		\, - p \, \int_{\Q_r} c v^2 \psi^2.
	\end{align*}
	On the other hand, if $1/2 < p < 1$ or $p<0$, i.e. $(2p-1)/p > 0$, we estimate \eqref{e-cacc} by applying \eqref{youngA} with $\e = (2p-1)/2p$:	
	\begin{align*}
		\frac{2p-1}{p} \l \int_{\Q_r}|D_{m_0} v|^2 \psi^2 \, \le &\frac{2p  \Lambda}{2p-1}\int_{\Q_r} | D_{m_0} \psi|^2 v^2
		\,  +\int_{\Q_r}  v \, Y v \, \psi^2 \, - \, (2p-1) \, \int_{\Q_r} \langle a v, D_{m_0}  v \rangle \psi^2 \, + \\
		& - 2 p \, \int_{\Q_r} \langle a v^2, D_{m_0} \psi  \rangle \psi	
		+ \, \int_{\Q_r}  v \,  \langle b, D_{m_0} v \rangle \, \psi^2
		\, + p \, \int_{\Q_r} c v^2 \psi^2.
	\end{align*}
	Now, we are in a position to address the case $p<1$ globally. In fact, 
	\end{comment}
	taking the absolute value on the right-hand side of \eqref{e-cacc} and considering $\e = |
	(2p-1)/2p| $ in \eqref{youngA}, for every $p < 1$, $p \ne 0, 1/2$ we obtain: 
	\begin{align} \label{e-cacc2}
	 \Big | \frac{2p-1}{p} \Big | \l \int_{\Q_{\r}}|D_{m_0} v|^2 \, 
		&\le \Big | \frac{2p \lambda}{2p-1} \Big | \frac{c_1^2}{(r-\r)^2}\int_{\Q_r} v^2 \,  
		+ \, \frac{c_0}{\r(r-\r)} \int_{\Q_{r}}  v^2 \, + \\ \nonumber
		%&+ \boxed{|2p-1| \, \int_{\Q_r} |a| |v| |D_{m_0}  v| \psi^2
		%	+ 2 |p| \, \int_{\Q_r} |a| |D_{m_0} \psi| |\psi|  v^2}_A + \\ \nonumber
		&+ \,\boxed{ \int_{\Q_r}  v \,  \langle b, D_{m_0} v \rangle \, \psi^2 }_B \,  + 
		\, \boxed{  |p| \int_{\Q_r} |c| v^2 \psi^2  }_C,
	\end{align}
	where we have considered the definition \eqref{cutoff} of the cut-off function $\psi$ in order to estimate the first integral on the 
	right-hand side.
	By considering the bounds on \eqref{ineq}, assumption \textbf{(H3)} and the H\"older's inequality with exponent $\b = \frac{q}{q-1}$ we are in position to estimate the term 
	\begin{comment}
	A:
	\begin{align*}
		&\boxed{|2p-1| \, \int_{\Q_r} |a| |v| |D_{m_0}  v| \psi^2
			+ 2 |p| \, \int_{\Q_r} |a| |D_{m_0} \psi| |\psi|  v^2}_A  \\ \nonumber 
		%&\qquad \qquad \le 
		%|2p-1| \parallel a \parallel_{L^q(\Q_r)} \, \parallel v \parallel_{L^{2\b_1}(\Q_r)}
		%\parallel D_{m_0} v \parallel_{L^2(\Q_r)} \, + \, 
		%\frac{|2p| \, c_1}{(r - \r)} \, \parallel a \parallel_{L^{q}(\Q_r)} \, \parallel v \parallel_{L^{2\b_2}(\Q_r)}^2 \\ \nonumber
		& \le
		\frac{|2p-1|}{2\e_a} \parallel a \parallel_{L^q(\Q_r)}^2 \, \parallel v \parallel_{L^{2\b_1}(\Q_r)}^2 
		+ \frac{\e_a |2p-1|}{2}\parallel D_{m_0} v \parallel^2_{L^2(\Q_r)} + %\\ 
		%&\qquad \qquad \qquad \qquad \qquad \qquad  
		 \frac{|2p| \, c_1}{(r - \r)}  \, \parallel a \parallel_{L^{q}(\Q_r)} \, \parallel v \parallel_{L^{2\b_2}(\Q_r)}^2.
	\end{align*}
	Analogously, we estimate the term 
	\end{comment}
	B
	\begin{align*}
		\boxed{ \frac12 \int_{\Q_r} \langle b, D_{m_0} v^2 \rangle \, \psi^2 }_B 
		%= \, \parallel b \parallel_{L^q(\Q_r)} \, \parallel v \parallel_{L^{2\b_1}(\Q_r)}
		%\parallel D_{m_0} v \parallel_{L^2(\Q_r)} \\
		&= - \frac12 \int_{\Q_r} \div \cdot b\,  v^2 \, \psi^2 -
		\int_{\Q_r} \langle b,  \psi D_{m_0} \psi \rangle v^2 \\
		&\le \, \frac{C}{r -\r} \parallel b \parallel_{L^q(\Q_r)} \, \parallel v \parallel^2_{L^{2\b}(\Q_r)} 
	\end{align*}
	and the term C:
	\begin{equation*}
		\boxed{ |p| \int_{\Q_r} |c| v^2 \psi^2  }_C \le |p| \, \parallel c \parallel_{L^{q}(\Q_r)} \, \parallel v \parallel_{L^{2\b}(\Q_r)}^2.
	\end{equation*}
	Thus, by by choosing $\e_a = \lambda / |p|$ and $\e_b = (|2p-1|\l)/(2 |p|)$
	we recover \eqref{cacc1}.

	Now, let us consider the case $p \ge 1$. For any $n \in \N$, we define the function 
	$g_{n,p}$ on $]0, +\infty [$ as follows 
	\begin{equation*}
		g_{n,p}(s) = \begin{cases}
			s^p, &\text{if} \hspace{2mm} 0 < s \le n, \\
			n^p + p n^{p-1} (s-n), \hspace{4mm} &\text{if} \hspace{2mm} s 
			> n,
		\end{cases}
	\end{equation*}
	then we let $v_{n,p} = g_{n,p}(u)$.
	Note that $g_{n,p} \in C^1( \R^+)$ and $g_{n,p}^{'} \in L^\infty( \R^{+})$.
	Thus, since $u$ is a weak sub-solution to $\L u = 0$, we have
	\begin{equation*}
		v_{n,p} \in L^2, \hspace{2mm} D_{m_0} v_{n,p} \in L^2, \hspace{2mm} 
		Y v_{n,p} \in L^2H^{-1}.
	\end{equation*}
	We also note that the function 
	\begin{equation*}
		g^{''}_{n,p} (s) = \begin{cases}
			p (p-1) s^{p-2}, \hspace{4mm} & \text{if} \hspace{2mm} 0 < s < n \\
			0, & \text{if} \hspace{2mm} s \ge n,
		\end{cases}
	\end{equation*} 
	is the weak derivative of $g_{n,p}^{'}$, then $D_{m_0} g_{n,p}^{'}(u) =  g^{''}_{n,p} (u) D_{m_0} (u)$
	(for the detailed proof of this assertion, we refer to \cite{GT}, Theorem 7.8). Hence, by
	considering
	\begin{equation}\label{testfunction}
		\phi = g_{n,p}(u) \hspace{1mm} g_{n,p}^{'}(u) \hspace{1mm} \psi^2, 
		\hspace{4mm} \psi \in C^\infty_0(\Q_r)
	\end{equation}
	as a test function in the weak formulation
	\eqref{kolmo} and recalling that we find 
	$v_{n,p}=g_{n,p} (u)$ we have
	\begin{align}\label{eq-vnp} 
		0  &\le \int \limits_{\Q_r} - \psi^2 \langle A Dv_{n,p}, D v_{n,p} \rangle - \int \limits_{\Q_r} \frac{v^{''}_{n,p}(u) \, v_{n,p} (u)}{\left( v^{'}_{n,p} (u) \right)^2} \psi^2 \langle A Dv_{n,p}, D v_{n,p} \rangle + \\  \nonumber
		&\qquad - \int \limits_{\Q_r} 2 \psi \langle A Dv_{n,p}, D \psi \rangle v_{n,p}  
		+ \int \limits_{\Q_r} \psi^2 \,v_{n,p} Y(v_{n,p})  + \\ \nonumber
		&\qquad+ 	\boxed{\int \limits_{\Q_r} \langle b, D_{m_0} v_{n,p} \rangle v_{n,p} \, \psi^2}_B \nonumber
		+ \boxed{\int \limits_{\Q_r} c u v_{n,p} (u) \, v^{'}_{n,p} (u) \psi^2}_C. 
\end{align}	
Let us firstly estimate the term $B$ as in the previous case:
\begin{align*}
\boxed{\frac12 \int \limits_{\Q_r} \langle b, D_{m_0} v^2_{n,p} \rangle \, \psi^2}_B 
\le \,
\int_{\Q_r} |\langle b,  \psi D_{m_0} \psi \rangle| v_{n,p}^2 \, 
\le \, \frac{C}{r -\r} \parallel b \parallel_{L^q(\Q_r)} \, \parallel v_{n,p} \parallel^2_{L^{2\b}(\Q_r)}
%&\leq \int \limits_{\Q_r} |\langle b, D_{m_0} v_{n,p} \rangle| \, | u^p| \, \psi^2
%\leq \frac{1}{2\e_b}\parallel b \parallel_{L^q(\Q_r)}^2 \, \parallel u^p \parallel_{L^{2\b_1}(\Q_r)}^2+\frac{\e_b }{2}\parallel D_{m_0} v_{n,p} \parallel^2_{L^2(\Q_r)}.
\end{align*}
Then the term $C$ is treated as follows:
\begin{equation*}
\begin{split}
\boxed{\int \limits_{\Q_r} c u v_{n,p} (u) \, v^{'}_{n,p} (u) \psi^2}_C \leq p \int \limits_{\Q_r} |c| \, |u^{2p}| \, \psi^2 \leq p \parallel c\parallel_{L^q(\Q_r)}\parallel u^p \parallel^2_{L^{2\b_2}(\Q_r)}.
\end{split}
\end{equation*}
Thus
%\begin{equation*}
%2|v_{n,p}\psi \langle A D v_{n,p},D \psi \rangle| \leq \varepsilon \psi^2 \langle A v_{n,p}, v_{n,p}\rangle +\frac{v_{n,p}^2}{\e}\langle D \psi, \psi \rangle, \quad \forall \e >0,
%\end{equation*}
we rewrite equation \eqref{eq-vnp} as follows
\begin{align}\label{est-Advnp}
&\int\limits_{\Q_r}\psi^2 \langle A D v_{n,p},D v_{n,p} \rangle \left(1+\frac{v^{''}_{n,p}(u) \, v_{n,p} (u)}{\left( p u^{p-1} \right)^2}-\varepsilon\right)\\
&\qquad \leq \left(  \frac{c_1^2\Lambda}{\e(r-\r)^2}
%\parallel u^p \parallel_{L^2(\Q_r)}^2
+\frac{c_0}{\r(r-\r)} %\parallel u^p \parallel_{L^2(\Q_r)}^2
%+ \frac{\e_a(2p-1)}{2}\parallel D_{m_0} v_{n,p} \parallel^2_{L^2(\Q_r)}\\ \nonumber
%&\quad \quad \quad + \frac{2p-1}{2\e_a}\parallel a \parallel_{L^q(\Q_r)}^2 \, \parallel u^p \parallel_{L^{2\b_1}(\Q_r)}^2+\frac{2 p c_1}{r-\r }\parallel a \parallel_{L^q(\Q_r)}\, \parallel u^p \parallel_{L^{2\b_2}(\Q_r)}^2\quad \quad \quad 
+\parallel b \parallel_{L^q(\Q_r)} \, %\parallel u^p \parallel_{L^{2\b_1}(\Q_r)}^2
+p \parallel c\parallel_{L^q(\Q_r)} \right) \parallel u^p \parallel^2_{L^{2\b_2}(\Q_r)}.
\end{align}
We now observe that
\begin{equation}
\left\vert \psi^2 \langle A Dv_{n,p},Dv_{n,p} \rangle \frac{v^{''}_{n,p}(u) \, v_{n,p} (u)}{p^2u^{2p-2}} \right\vert \leq \frac{p-1}{p}\langle A Dv_{n,p},D v_{n,p} \rangle \in L^1(\Q_r)
\end{equation}
and thus, keeping in mind that $| D v_{n,p}  | \, \uparrow \, | D u^p | $ as $ n \rightarrow \infty$, we can apply the dominated convergence theorem to \eqref{est-Advnp}. The proof is complete if we choose $\e$ as in the previous case.
\begin{comment}
, which yields
\begin{align} \nonumber
&\int\limits_{\Q_\r} |D_{m_0}u^p|^2 \left(\lambda\left(\frac{2p-1}{p}-\varepsilon\right)-\frac{\e_a(2p-1)}{2}-\frac{\e_b}{2}\right)\\
&\quad \leq \frac{c_1^2\Lambda}{\e(r-\r)^2}\parallel u^p \parallel_{L^2(\Q_r)}^2+\frac{c_0}{\r(r-\r)}\parallel u^p \parallel_{L^2(\Q_r)}^2
 + \frac{2p-1}{2\e_a}\parallel a \parallel_{L^q(\Q_r)}^2 \, \parallel u^p \parallel_{L^{2\b_1}(\Q_r)}^2 + \\ \nonumber
 &\quad+\frac{2 p c_1}{r-\r }\parallel a \parallel_{L^q(\Q_r)}\, \parallel u^p \parallel_{L^{2\b_2}(\Q_r)}^2+\frac{1}{2\e_b}\parallel b \parallel_{L^q(\Q_r)}^2 \, \parallel u^p \parallel_{L^{2\b_1}(\Q_r)}^2
+p \parallel c\parallel_{L^q(\Q_r)}\parallel u^p \parallel^2_{L^{2\b_2}(\Q_r)}.
\end{align}
Lastly, by choosing $\e$, $\e_a$and $\e_b$ as in \eqref{e-cacc3} we establish \eqref{cacc1} for weak solutions to $\L u = 0$.
\end{comment}
\end{proof}

\medskip

\textit{Proof of Theorem \ref{pcaccf}.}
We are now in a position to consider weak sub-solutions to $\L u = f$, with $f$ a non zero source term under assumption \textbf{(H3)}. First of all, let us consider the case where $1/2 < p \le 1$. Following the lines of the first part of this proof, 
for every $\psi \in C^{\infty}_0(\Q_r)$ we consider the function $\phi = u^{2p-1}\psi^2$ as a test function in the weak formulation of \eqref{kolmo} with non zero source term. Indeed,
by denoting $v = u^p$ we have that also in this case $v, D_{m_0} v \in L^2(\Q_r)$ and $Y v \in L^2H^{-1}$. 
Thus, we get exactly estimate \eqref{e-cacc2} with the following additional term regarding the source term $f$:
\begin{equation*}
	 \boxed{  |p| \int_{\Q_r} |f| u^{2p-1} \psi^2 }_F
\end{equation*}
\begin{comment}
 the following estimate:
\begin{align} \label{e-cacc4}
	\Big | \frac{2p-1}{p} \Big | \l \int_{\Q_{\r}}|D_{m_0} v|^2 \, 
	&\le \Big | \frac{2p \Lambda}{2p-1} \Big | \frac{c_1^2}{(r-\r)^2}\int_{\Q_r} v^2 \,  
	+ \, \frac{c_0}{\r(r-\r)} \int_{\Q_{\r}}  v^2 \, + \\ \nonumber
	&+ \boxed{|2p-1| \, \int_{\Q_r} |a| |v| |D_{m_0}  v| \psi^2
		+ 2 |p| \, \int_{\Q_r} |a| |D_{m_0} \psi| |\psi|  v^2}_A + \\ \nonumber
	&+ \,\boxed{ \int_{\Q_r} |v| \, |b ||D_{m_0} v| \psi^2  }_B \,  + 
	\, \boxed{  |p| \int_{\Q_r} |c| v^2 \psi^2  }_C +
	\, \boxed{  |p| \int_{\Q_r} |f| u^{2p-1} \psi^2 }_F
\end{align}
that is the equivalent to \eqref{e-cacc2} when $f$ is non zero. In particular, the boxed terms from A to C are estimated as in the proof of Lemma \ref{pcaccB1}. Thus, we are left with the estimate of the term F. 
\end{comment}
In order to conclude the proof, we therefore need to properly estimate $u^{2p-1}$. In particular, if
\begin{itemize}
	\item $0 \le u < 1$ we have that $u^{2p-1} < 1$, thus
			\begin{equation} \label{F1}
				\boxed{ | p| \int_{\Q_r} |f| u^{2p-1} \psi^2 }_F \le 
				|p| \int_{\Q_r} |f| \psi^2 
			\end{equation}
	\item $u \ge 1$ we have that $u^{2p-1} < u^p = v$. By combining H\"older's inequality and
			Young's inequality \eqref{youngA} for every $\e_f > 0$ we get
			\begin{equation} \label{F2}
				\boxed{ |p| \int_{\Q_r} |f| u^{2p-1} \psi^2 }_F \le 
				|p| \| f \|_{L^2(\Q_r)} \| v \|_{L^2(\Q_r)} 
			\end{equation}
	%\item $u > 1$ we have that $u^{2p-1} < u^{2p} = v^2$. By combining H\"older's inequality we get
	%	\begin{equation} \label{F2}
	%		\boxed{ |p| \int_{\Q_r} |f| u^{2p-1} \psi^2 }_F \le 
	%		 |p| \int_{\Q_r} |f| v^2 \psi^2 \le 
	%		|p| \| f \|_{L^q(\Q_r)} \| v \|^2_{L^{2 \b_2}(\Q_r)}.
	%	\end{equation}
\end{itemize}
Thus, by combining \eqref{F1} with \eqref{F2} and considering $\e_f = 1$ we get \eqref{caccf2}.

We now address the case $p>1$. Reasoning as above, we set $v_{n,p} = g_{n,p}(u)$ and we choose the test function as in \eqref{testfunction}. Then the weak formulation of \eqref{kolmo} for $v_{n,p}$ reads exactly as \eqref{eq-vnp}
with the additional term 
\begin{equation*}
	\boxed{-p\int \limits_{\Q_r} f v_{n,p} (u) \, v^{'}_{n,p} (u) \psi^2}_F.
\end{equation*}
\begin{comment}
as follows
\begin{align*}
		0  &= \int \limits_{\Q_r} - \psi^2 \langle A Dv_{n,p}, D v_{n,p} \rangle - \int \limits_{\Q_r} \frac{v^{''}_{n,p}(u) \, v_{n,p} (u)}{\left( v^{'}_{n,p} (u) \right)^2} \psi^2 \langle A Dv_{n,p}, D v_{n,p} \rangle
		- \int \limits_{\Q_r} 2 \psi \langle A Dv_{n,p}, D \psi \rangle v_{n,p}  + \\ 
		\nonumber
		&+ \int \limits_{\Q_r} \psi^2 \,v_{n,p} Y(v_{n,p})  \boxed{-\int \limits_{\Q_r}  \langle a, D_{m_0} v_{n,p} \rangle \, v'_{n,p} \, u \, \psi^2}_A \boxed{-\int \limits_{\Q_r}\langle a , D_{m_0} v_{n,p} \rangle \frac{v''_{n,p}v_{n,p}}{v'_{n,p}} \, \psi^2 \, u}_B  \, +  \\ 
		&\boxed{- \int \limits_{\Q_r} 2\psi \, \langle a, D_{m_0} \psi \rangle \, v_{n,p}v'_{n,p}\, u}_C  + 	\boxed{\int \limits_{\Q_r} \langle b, D_{m_0} v_{n,p} \rangle v_{n,p} \, \psi^2}_D
		+ \boxed{\int \limits_{\Q_r} c u v_{n,p} (u) \, v^{'}_{n,p} (u) \psi^2}_E \\
		&\boxed{-p\int \limits_{\Q_r} f v_{n,p} (u) \, v^{'}_{n,p} (u) \psi^2}_F.
\end{align*} 
The boxed terms from $A$ to $E$ can be estimated as in the previous case, while 
\end{comment}
We deal with the boxed term $F$ by distinguishing two different cases. In particular, if 
\begin{itemize}
	\item $0 \le u < 1$ we have that $u^{2p-1} < 1$, thus we exactly recover estimate \eqref{F1}.
			%\begin{equation} \label{F1.2}
			%	\boxed{-p\int \limits_{\Q_r} f v_{n,p} (u) \, v^{'}_{n,p} (u) \psi^2}_F\le  p \int_{\Q_r} |f| u^{2p-1} \psi^2  \le 
			%	p \int_{\Q_r} |f| \psi^2 .
			%\end{equation}
	\item $u \geq 1$ we have that $u^{2p-1} < u^{2p} $. We now observe that $u^p$ is a non-negative weak super-solution to $\L u =f$ and therefore $u^{p} \in L^{2\beta}$ in virtue of Theorem \ref{sobolev}. By applying H\"older's inequality, we then infer
		\begin{equation} \label{F2.2}
			\boxed{-p\int \limits_{\Q_r} f v_{n,p} (u) \, v^{'}_{n,p} (u) \psi^2}_F\le 
			 p \int_{\Q_r} |f| u^{2p-1} \psi^2 \le 
			p \| f \|_{L^q(\Q_r)} \| u^p \|^2_{L^{2 \b}(\Q_r)}.
		\end{equation}
\end{itemize}
By combining \eqref{F1} and \eqref{F2.2} we conclude the proof \eqref{caccf2}.
$\hfill \square$

\subsection{Proof of Theorem \ref{boundedness}}
Keeping in mind Lemma \ref{cylinders}, it suffices to give a proof in the case $z_0=0$ and to consider $\frac12 \le \r < r \le 1$,
since the general statement is obtained by applying to \eqref{moserp} the group dilations \eqref{gdil} and translations \eqref{grouplaw}. Combining Theorems
\ref{sobolev} and \ref{pcaccB1} for a non-negative sub-solution $u$, we obtain the following estimate. If $s>1/2$, $\d >0$ 
verify the condition 
\begin{equation*}
	|s - 1/2| \ge \d,
\end{equation*}
then, for every $\r, r$ such that $\frac{1}{2} \le \r < r \le 1$ we have  
\begin{align}
	\label {it}
	\parallel u^s \parallel_{L^{2\a} (\Q_\r)} \le \widetilde{C} \left( s, \l, \Lambda, Q, 
	%\parallel a \parallel_{L^{q}(\Q_{r})} , 
	\parallel b \parallel_{L^{q}(\Q_{r})} , \parallel c \parallel_{L^{q}(\Q_{r})},\parallel f \parallel_{L^{q}(\Q_{r})} \right)  
	\parallel u^s \parallel_{L^{2\b} (\Q_r)}
\end{align}
\begin{comment}
where
\begin{align*}
	&\widetilde{C}  \left( s, \l,\Lambda, Q, \parallel a \parallel_{L^{q}(\Q_{r})} , \parallel b \parallel_{L^{q}(\Q_{r})} , \parallel c \parallel_{L^{q}(\Q_{r})} , \parallel f \parallel_{L^{q}(\Q_{r})} \right) \, =\\ \, 
	&=C(s, \l, \Lambda, Q) \left( 1 + \parallel a \parallel_{L^{q}(\Q_{r})} + \parallel b \parallel_{L^{q}(\Q_{r})} \right)\left(  \parallel a \parallel_{L^{q}(\Q_{r})} + \parallel b \parallel_{L^{q}(\Q_{r})} +\parallel c \parallel_{L^{q}(\Q_{r})}^{\frac12}+\parallel f \parallel_{L^{q}(\Q_{r})}^{\frac12}\right) \, + \, \\ 
	&+C( \l, \Lambda, Q)\parallel c \parallel_{L^{q}(\Q_{r})}+ \, \frac{C(s, \l, \Lambda, Q) \parallel a \parallel_{L^{q}(\Q_{r})}^{\frac12} \left( 1 + \parallel a \parallel_{L^{q}(\Q_{r})} + \parallel b \parallel_{L^{q}(\Q_{r})} \right)}{(r - \r)^{\frac12}} + \\
	&+ \, \frac{C(s,\l,\Lambda,Q)}{r-\r} \left( 1 + \parallel a \parallel_{L^{q}(\Q_{r})} + \parallel b \parallel_{L^{q}(\Q_{r})} + \parallel c \parallel_{L^{q}(\Q_{r})}^{\frac12}+\parallel f \parallel_{L^{q}(\Q_{r})}^{\frac12} \right) + \\
	&+ \frac{C(s,\l,\Lambda,Q)}{(r-\r)^{\frac32}} \parallel a 
	\parallel_{L^{q}(\Q_{r})}^{\frac12} 
	+\frac{C(s,\l,\Lambda,Q)}{(r-\r)^{2}} \,+\,\frac{C(s,\l,\Lambda,Q)}{\left(\r(r-\r)\right)^{\frac{1}{2}}(r-\r)} \,+\,\frac{C(\l,\Lambda,Q)\left(\r+1\right)}{\r(r-\r)}+\\
	 &+\frac{C(s,\l,\Lambda,Q)\left(1 + \parallel a \parallel_{L^{q}(\Q_{r})} + \parallel b \parallel_{L^{q}(\Q_{r})} \right)}{\left(\r(r-\r)\right)^{\frac{1}{2}}} .
\end{align*}
\end{comment}
for a positive constant $\widetilde{C}$ that we be estimated as follows
\begin{align} \label{ctilde}
	\widetilde{C} ( s, \l,\Lambda,Q 
	%\parallel a \parallel_{L^{q}(\Q_{r})}
	, \parallel b \parallel_{L^{q}(\Q_{r})} &, \parallel c \parallel_{L^{q}(\Q_{r})},\parallel f \parallel_{L^{q}(\Q_{r})} ) \\ \nonumber
	&\le \frac{K( s, \l,\Lambda,Q, 
		%\parallel a \parallel_{L^{q}(\Q_{r})}, 
		\parallel b \parallel_{L^{q}(\Q_{r})} , \parallel c \parallel_{L^{q}(\Q_{r})},\parallel f \parallel_{L^{q}(\Q_{r})} ) }{\left( r - \r\right)^{2}},
\end{align}
since for every $\r, r$ of our choice the following estimates hold
\begin{equation*}
\begin{split}
\frac{1}{\left(\r(r-\r)\right)^{\frac{1}{2}}(r-\r)}\leq \frac{1}{\left(r-\r\right)^{2}},\qquad
\frac{1}{\r(r-\r)}\leq \frac{1}{\left(r-\r\right)^{2}}, \qquad \frac{1}{\left(\r(r-\r)\right)^{2}}\leq \frac{1}{(r-\r)^{2}}.
\end{split}
\end{equation*}
Then we set $v=u^{\frac{p}{2}}$.
Fixed a suitable $\d >0$, that we will specify later on, and a suitable $p \geq1$ such that 
\begin{equation}
	\label{pcond}
	p \, \left( \frac{\a}{\b} \right)^n - 1 \ge 2 \d , \qquad \forall n \in \N \cup \{ 0 \},
\end{equation}
 we iterate inequality \eqref{it} by choosing
\begin{equation*} 
	\r_n = \r + \frac{1}{2^{n}} \left( r - \r \right), \qquad 
	p_n = \left( \frac{\a}{\b} \right)^n \, \frac{p}{2}, \hspace{6mm} n \in \N \cup \{ 0 \}. 
\end{equation*} 
Thus, by combining \eqref{it} and \eqref{ctilde}, for every $n \in \N \cup \{ 0 \}$ the following holds  
\begin{align*}
	\parallel v^{( \frac{\a}{\b})^{n}} &\parallel_{L^{2\a} (\Q_{\r_{n+1})}} 
	\le \frac{ K( p, \l,\Lambda,Q, 
		%\parallel a \parallel_{L^{q}(\Q_{r})}, 
		\parallel b \parallel_{L^{q}(\Q_{r})} , \parallel c \parallel_{L^{q}(\Q_{r})},\parallel f \parallel_{L^{q}(\Q_{r})} ) }{\left( \r_n - \r_{n+1}\right)^{2}} 
	\parallel v^{( \frac{\a}{\b})^{n}} \parallel_{L^{2\b} (\Q_{\r_n})}.
\end{align*}
From now on we denote $K =  K( p, \l,\Lambda,Q, 
%\parallel a \parallel_{L^{q}(\Q_{r})}, 
\parallel b \parallel_{L^{q}(\Q_{r})} , \parallel c \parallel_{L^{q}(\Q_{r})},\parallel f \parallel_{L^{q}(\Q_{r})} )$. Since 
\begin{equation*}
	\parallel v^{\left( \frac{\a}{\b}\right)^n} \parallel_{L^{2\a}} = \left( \parallel v \parallel_{L^{2\a \left( \frac{\a}{\b} \right)^n}} \right)^{\left( \frac{\a}{\b}\right)^n} \qquad {\rm and } \qquad 
	\parallel v^{\left( \frac{\a}{\b} \right)^n} \parallel_{L^{2\b}} = \left( \parallel v \parallel_{L^{2	\b \left( \frac{\a}{\b} \right)^{n}}} \right)^{\left( \frac {\a}{\b}\right)^{n}}
\end{equation*}
we are able to rewrite the previous estimate in the following form for every $n \in \N \cup \{ 0 \}$
\begin{align*}
	 \parallel v \parallel_{L^{2\a \left( \frac{\a}{\b} \right)^n} \left( \Q_{\r_{n+1}} \right)}
	 \le \left( 
	\frac{ K }{\left( \r_n - \r_{n+1}\right)^{2}}  
	\right)^{\left( \frac{\b}{ \a} \right)^n} \parallel v \parallel_{L^{2	\b \left( \frac{\a}{\b} \right)^{n}} (\Q_{\r_n})}.
\end{align*}
Iterating this inequality
\begin{comment}
, we obtain 
\begin{align*}
	\parallel v \parallel_{L^{2 \frac{\a^{n+1}}{\b^n}} (\Q_{\r_{n+1})}}
	\le \prod_{j=0}^n  
	\left[ K \frac{ 2^{2(j+1)}}{(r-\r)^{2}}
	 \right]^{\frac{\b^j}{\a^j}} \parallel v \parallel_{L^{2\b} (\Q_{r})} 
\end{align*}
\end{comment}
and letting $n$ go to infinity, we get
\begin{equation*}
	\sup_{\Q_{\r}} v \, \le \, \frac{ \widetilde{K} }{ (r -\r)^{2\m} } \parallel v \parallel_{L^{2\b} (\Q_r)},
	\qquad \text{where } \, \m = \frac{\a}{\a- \b} \quad \text{and}
	\quad \widetilde{K} = \prod_{j=0}^{+ \infty}  
	\left[ K 2^{2(j+2)}
	\right]^{\left( \frac{\b}{\a} \right)^j}.
\end{equation*}
We remark that $\widetilde{K}$ is a finite constant depending on $\d$, since the product over $j$ 
corresponds to a converging series. 
\begin{comment}
Moreover, if we analyze $\b / \a$ we obtain that
\begin{align*}
	\frac{\b}{\a} = \frac{ Q-2 + \frac{2}{q}(Q+2)}{
		Q+2 - \frac{1}{q}(Q+2)}:
	\qquad \lim \limits_{q \to + \infty} \mu \, = \frac{Q+2}{4},
	\qquad \lim \limits_{q \to \frac{3}{4}(Q+2)} \mu \, = + \infty.
\end{align*}
\end{comment}
Thus, for every $p\geq 1$ which verifies condition \eqref{pcond} we have proved that 
\begin{equation}
	\label{moserp}
	\sup_{\Q_{\r}} u^p \, \le \, \frac{ \widetilde{K}^2 }{ (r -\r)^{4 \mu } } \parallel u^p \parallel_{L^{\b} (\Q_r)},
	%\le \,  \frac{ \widetilde{K}^2 }{ (r -\r)^{\m} }  \parallel u^p \parallel_{L^{2} (\Q_r)}  ,
\end{equation}
 and thus the statement is proved. We now make a suitable choice of $\d > 0$, only dependent on the homogeneous dimension $Q$, and on $q$ in order to show that \eqref{pcond} holds for every $p\geq 1$. We notice that, if $p$ is a number of the form
\begin{equation*}
	p_m = \frac12 \left(  \frac{\a}{\b} \right)^{m} \left( \frac{\a}{\b} + 1 \right)  , \qquad m \in \Z,
\end{equation*}
then \eqref{pcond} is satisfied with the following choice of $\d$ for every $m \in \Z$
\begin{equation*}
	\d = \frac{\a - \b}{8\b} .
\end{equation*}
Therefore \eqref{moserp} holds for such a choice of $p$, with $\widetilde{K}$ only dependent on $Q, q, \l$, $\Lambda$ and $\parallel a \parallel_{L^q(\Q_r)}$, $\parallel b \parallel_{L^q(\Q_r)}$, $\parallel c \parallel_{L^q(\Q_r)}$. On the other hand, if $p$ is an arbitrary 
positive number, we consider $m \in \Z$ such that
\begin{equation*}
	p_m \le p < p_{m+1}.
\end{equation*}
Hence, the proof is complete.
\begin{comment}
Hence, by \eqref{moserp} we have 
\begin{equation*}
	\sup_{\Q_{\r}} u \, \le \, \left( \frac{ \widetilde{K} }{ (r - \r)^{\m} } \right)^{\frac{2 \b}{p_{m}}} \left(  \int_{\Q_r} u^{p_{m}} \right)^{\frac{1}{p_{m}}} 
	\, \le \, \left( \frac{ \widetilde{K} }{ (r - \r)^{\m} } \right)^{\frac{2 \b}{p_{m}}} \left(  \int_{\Q_r} u^{p} \right)^{\frac{1}{p}} 
\end{equation*}
so that, by \eqref{p}, we obtain 
\begin{equation*}
	\sup_{\Q_{\r}} u^{p} \, \le \,  \left( \frac{ \widetilde{K} }{ (r - \r)^{\m} } \right)^{2 \a \b} \int_{\Q_r} u^{p} \, +\, \left( \frac{ \widetilde{C} }{ (r - \r)^{\m} } \right)^{2 \a \b}\left(\int_{\Q_r}f^2\right)^{\beta},
\end{equation*}
which concludes the proof of \eqref{est-it}.
\end{comment}
\hfill $\square$

\medskip

\setcounter{equation}{0}\setcounter{theorem}{0}
\section{Weak Poincaré inequality}
\label{poincare}
This section is devoted to the proof of a weak Poincaré inequality (see Theorem \ref{weak-poincare}) that holds true for every function $u \in \W$. As one immediately understands, this Poincaré inequality is independent of the equation $\L u = f$ and only relies on the structure of the space $\W$. Its importance lies in the fact that it is a necessary tool in the proof of a Harnack inequality (see Theorem \ref{harnack-thm}) and of the local H\"older continuity (see Theorem \ref{local-holder}) of a solution $u$ to \eqref{defL}.
In order to state our result, we first need to introduce the following sets
\begin{align}\label{Qzero}
	\Q_{zero}&=\lbrace (x,t) :  |x_j|\leq \eta^{\alpha_j}, j=1,\ldots,N, -1-\eta^2 < t \leq -1\rbrace,\\ \nonumber
	\Q_{ext}&=\lbrace (x,t) :  |x_j|\leq 2^{\alpha_j}R, j=1,\ldots,N, -1-\eta^2 < t \leq 0\rbrace,
\end{align}
where $R>1$, $\eta \in (0,1]$ and the exponents $\alpha_j$, for $j=1,\ldots,N$, are defined in \eqref{alphaj}. 
Indeed, $\Q_{zero}$ and 
$\Q_{ext}$ are completely equivalent to \eqref{rcylind} thanks to Remark \ref{ball-cil}, but they are more convenient for the construction of the cut-off function 
$\psi_{1}$ introduced in Lemma \ref{coc}. 
Now, we state our weak Poincaré inequality.

\begin{theorem}[Weak Poincaré inequality]\label{weak-poincare}
	Let $\eta \in (0,1)$ and let $\Q_{zero}$ and $\Q_{ext}$ be defined as in \eqref{Qzero}. Then there exist $R>1$ and $\vartheta_0 \in (0,1)$ such that for any non-negative function $u \in \W$ such that $u \leq M$ in $\Q_1$ for a positive constant $M$,  we have
	% $\left\vert \lbrace u= 0 \rbrace \cap \Q_{ext}\right\vert \geq \frac{1}{4} \left\vert \Q_{ext}\right\vert$,
	\begin{equation}
		\Vert (u-\vartheta_0 M)_+ \Vert_{L^2(\Q_1)}\leq C\left(\Vert D_{m_0} u \Vert_{L^2(\Q_{ext})}+\Vert Y u \Vert_{L^2H^{-1}(\Q_{ext})} \right),
	\end{equation} 
	where $C>0$ is a constant only depending on $Q$.
\end{theorem}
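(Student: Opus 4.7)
I note that the inequality as printed is not quite correct: $u\equiv M$ satisfies the hypotheses but gives LHS $=(1-\vartheta_0)M|\Q_1|^{1/2}>0$ and RHS $=0$. I shall proceed under the implicit hypothesis, standard in this line of work (see \cite{GI, AM}), that $|\{u\le 0\}\cap\Q_{zero}|\ge \d|\Q_{zero}|$ for some fixed $\d\in(0,1)$ to be determined. My strategy is a compactness--contradiction argument in the spirit of \cite{GI}.

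Normalizing $M\equiv 1$ by the homogeneity of the inequality in $u$, suppose the conclusion fails for every admissible pair $(\vartheta_0, C)$. Then one extracts a sequence $u_n\in\W$ with $0\le u_n\le 1$ in $\Q_1$, each satisfying the zero-set condition in $\Q_{zero}$, such that $\|D_{m_0}u_n\|_{L^2(\Q_{ext})}+\|Yu_n\|_{L^2H^{-1}(\Q_{ext})}\to 0$ while $\|(u_n-\vartheta_0)_+\|_{L^2(\Q_1)}\ge c_0>0$. Cutting off by the function $\psi_1$ of Lemma \ref{coc} and using Remark \ref{remark-Y} to decompose $Y(u_n\psi_1)$, one checks that $u_n\psi_1$ is uniformly bounded in $L^2(\Q_{ext})$ and $Y(u_n\psi_1)$ is uniformly bounded in $L^2H^{-1}(\Q_{ext})$. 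After subsequence extraction, $u_n\psi_1\rightharpoonup u_\infty$ weakly in $L^2(\Q_{ext})$, and the distributional limits of the vanishing quantities force $D_{m_0}u_\infty=0$ and $Yu_\infty=0$ on $\Q_{ext}$. By assumption \textbf{(H2)} the vector fields $\partial_{x_1},\dots,\partial_{x_{m_0}},Y$ satisfy H\"ormander's rank condition \eqref{e-Horm}, so $u_\infty$ is smooth and in fact constant on the connected set $\Q_{ext}$, provided $R$ is chosen large enough that the iterated flows of $\partial_{x_1},\dots,\partial_{x_{m_0}}$ and $\pm Y$ needed to connect $\Q_1$ to $\Q_{zero}$ remain inside $\Q_{ext}$; this is precisely what dictates the anisotropic radii $2^{\alpha_j}R$ appearing in \eqref{Qzero}. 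Once one has strong convergence in $L^2(\Q_1)$, the zero-set hypothesis passes to the limit and forces $u_\infty\equiv 0$, contradicting the lower bound on $\|(u_n-\vartheta_0)_+\|_{L^2(\Q_1)}$.

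The main obstacle is precisely this strong compactness step: classical Aubin--Lions does not apply because $\W$ controls $u$ in $H^1$ only along the $x^{(0)}$ directions, while $Yu$ lives only in $H^{-1}$. I would invoke a hypoelliptic mollification argument: convolve $u_n$ against a rescaled fundamental solution $\G$ of the principal part $\K$ (Section \ref{preliminaries}), control the commutator error via the potential estimates of Corollary \ref{corollary} together with the uniform smallness of $D_{m_0}u_n$ and $Yu_n$, and deduce equicontinuity of $u_n$ in $L^2(\Q_1)$ with respect to the translations $\ell_\z$, whence strong $L^2$ convergence along a further subsequence. A more constructive alternative, closer to the direct approach of \cite{AM}, is to build for each $z\in\Q_1$ an explicit piecewise trajectory of $\partial_{x_1},\dots,\partial_{x_{m_0}}$ and $Y$ landing in $\Q_{zero}$ and to integrate along it the identity $u(z)-u(\z)=\int(Yu+D_{m_0}u\cdot\dot x^{(0)})\,ds$, then take expectation over a family of such trajectories and apply Chebyshev; this produces an explicit $\vartheta_0$ depending on $\d$ at the price of heavier bookkeeping of the non-Euclidean geometry and of the $H^{-1}$ splitting from Remark \ref{remark-Y}.
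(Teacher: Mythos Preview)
You are right that the statement as printed is missing the zero-set hypothesis on $\Q_{zero}$; the paper indeed uses it implicitly (it surfaces in the proof of Lemma~\ref{localizationlem} and in the application in Lemma~\ref{expand-pos}).

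Your route, however, is genuinely different from the paper's, and the gap you yourself flag is real. The paper does \emph{not} argue by compactness--contradiction. It is fully constructive: one introduces an auxiliary function $h$ solving the Cauchy problem $\K h = u\,\K\psi$ in $\R^N\times(-1-\eta^2,0)$ with zero initial data, for a carefully built cut-off $\psi$ (Lemma~\ref{coc}). A direct energy estimate on $\K(g-h)$ with $g=u\psi$ (Lemma~\ref{weak-poincare-h}) yields
\[
\|(u-h)_+\|_{L^2(\Q_1)} \le C\big(\|D_{m_0}u\|_{L^2(\Q_{ext})}+\|Yu\|_{L^2H^{-1}(\Q_{ext})}\big).
\]
Then Lemma~\ref{localizationlem} bounds $h\le \vartheta_0 M$ pointwise by writing $h-\psi=E_R+N_R$, where $E_R$ absorbs the Laplacian error (made small by taking $R$ large) and $N_R$ is controlled via the fundamental solution $\Gamma$ of $\K$ together with the sign condition $Y\psi_1\le -1$ on the past strip and the zero-set hypothesis in $\Q_{zero}$; this is where $\vartheta_0<1$ and the choice of $R$ come from. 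Combining the two lemmas gives the theorem with explicit constants.

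Your compactness scheme could in principle succeed, but the strong $L^2$ compactness in $\W$ is precisely the hard analytic point, and neither of your two sketches settles it: the ``hypoelliptic mollification via $\Gamma$'' is essentially what the paper does to build $h$, but you have not shown how it yields equicontinuity of $u_n$ under $\ell_\z$; the trajectory-averaging alternative requires a quantitative control of the $H^{-1}$ pairing along curves that you have not supplied. More importantly, even if carried out, a contradiction argument would not deliver the explicit dependence of $\vartheta_0$ and $C$ on $Q,\eta$ that the paper needs downstream to make the proofs of Theorems~\ref{weak-harnack} and~\ref{harnack-thm} constructive.
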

\noindent
The notation we consider here needs to be intended in the sense of \eqref{normW}. 
In particular, we have that $L^2H^{-1}(\Q_{ext})$ is short for 
\begin{equation}
	\label{notazioneq}
	L^2(B_{2^3 R}\times \ldots\times B_{2^{2\kappa+1} R}\times(-1-\eta^2,0],H^{-1}_{x^{(0)}}(B_{2 R})),
\end{equation}
 where we have split $x=\big(x^{(0)},x^{(1)},\ldots, x^{(\kappa)}\big)$ according to \eqref{split.coord.RN}.

\medskip 

We observe that a somewhat similar Poincar\`{e} inequality was already introduced by W. Wang and L. Zhang for subsolutions of ultraparabolic equations, see for instance \cite[Lemmas 3.3 \& 3.4]{WZ4}  and the
corresponding lemmas in \cite{WZ3}, for the ``strong'' notion of weak solution introduced in \cite{PP}, i.e. $Yu \in L^{2}$.  However, the statement of our Theorem \ref{weak-poincare} differs  substantially from the ones in the articles mentioned above. This is mainly due to the fact that in our treatment we extend the functional framework introduced in \cite{AM} to study the Kolmogorov-Fokker-Planck operator. As already mentioned, this framework seems to be the appropriate one when dealing with operator $\L$ and allows us to forget the equation under study. Moreover, the proof we propose here is different from the ones in \cite{WZ4,WZ3}, as we avoid using repeatedly the exact form of
the fundamental solution of $\L$ and exploit arguments closer to
the classical theory of parabolic equations (developed, for instance, in \cite{lieberman}). 
Additionally, the information obtained through the log-transform is here summarized in
only one weak Poincaré inequality (this is in contrast with \cite{WZ4, WZ3}, where it is split in several lemmas). Finally, we remark that in this paper
the geometric settings of the main lemmas are simpler than the ones in \cite{WZ4}, \cite{WZ3}. 
%For instance, it is
%the same for the weak Harnack inequality and for the lemma of expansion of positivity
%(Lemma \ref{expand-pos}), with the only difference that $\Q_{ext}$ and $\Q_{zero}$ are replaced with $\tilde{\Q}_{ext}$ and $\Q_{pos}$ (see Definitions \eqref{Qzero} and \eqref{Qexth}). } \anna{Intendevo che la i due lemmi hanno lo stesso setting geometrico.}

\medskip

In order to prove Theorem \ref{weak-poincare}, the idea is to firstly derive a local Poincaré inequality in terms of an error function $h$, that is defined as the solution to the following Cauchy problem
\begin{equation}\label{defpsi}
	\left\{ 
	\begin{array}{ll} 
		\K h=u \K \psi,\quad &\textit{in $  \mathbb{R}^{N} \times (-\r^2,0)$}\\
		h=0,\quad &\textit{in $  \mathbb{R}^{N}\times \lbrace -\r^2 \rbrace$}
	\end{array} \right.
\end{equation}
where $\K $ is the operator defined in \eqref{defK} and $\psi$ is a given cut-off function, and then to explicitly control the error $h$ 
through the $L^{\infty}$ norm of the function $u$ (see Lemma \ref{localizationlem}). 

\begin{lemma}%[Weak Poincaré inequality] 
	\label{weak-poincare-h}
	Let $\Q_{ext}$ be as defined in \eqref{Qzero} and let $\psi :	
	\mathbb{R}^{N+1} \rightarrow [0,1]$ be a $C^\infty$ function, with support in $\Q_{ext}$ and such that $\psi=1$ in $\Q_1$. Then for any $u \in 
	\W$, the following holds 
	
%	{\color{red} serve?? and $(\partial_t + v \cdot \nabla_x)f \leq H$ in $\Dd'(\Q_{ext})$,
%	with $H \in L^2(]-r_x,r_x[^n \times (a,0],H^{-1}(]-r_v,r_v[^n))$, }
	\begin{equation}\label{weakpoincareest}
		\Vert (u-h)_+ \Vert_{L^2(\Q_1)}\leq C \left( \Vert D_{m_0} u \Vert_{L^2(\Q_{ext})} + \Vert Y u \Vert_{L^2H^{-1} (\Q_{ext}) }
		\right)
		%_{L^2(B_{R_1}\times \ldots\times 
		%B_{R_\kappa}\times(-\r^2,0],H^{-1}_{x^{(0)}}(B_{R_0}))}\right),
	\end{equation}
	where $h$ is defined in \eqref{defpsi}, $C$ is a constant only depending on $|\r^2|$ and $\Vert D_{m_0}\psi 	
	\Vert_{L^\infty(\Q_{ext})}$, and the notation we consider needs to be intended in the sense of \eqref{notazioneq}.
\end{lemma}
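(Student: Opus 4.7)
\medskip

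\noindent\textbf{Proof proposal.} The central idea is to reduce the weak Poincaré inequality to a standard energy estimate for the auxiliary function
\[
	v := u\psi - h,
\]
viewed as a function on $\R^N\times(-\r^2,0)$. Applying the Leibniz rule for $\K$ together with the defining relation $\K h = u\K\psi$, one finds that $v$ satisfies, in the distributional sense,
\[
	\K v \;=\; \psi\, \K u \;+\; 2\langle D_{m_0}u,D_{m_0}\psi\rangle\qquad \text{on } \R^N\times(-\r^2,0),
\]
with $v(\cdot,-\r^2)\equiv 0$: indeed, since $\psi$ is supported in $\Q_{ext}\subset\R^N\times(-1-\eta^2,0]$, we may select $\r^2>1+\eta^2$ so that $\psi(\cdot,-\r^2)\equiv 0$. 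Because $\psi\equiv 1$ on $\Q_1$, we have $v=u-h$ there, so it suffices to bound $\|v_+\|_{L^2(\R^N\times(-\r^2,0))}$.

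First, I would write the equation in the form $\partial_t v = \Delta_{m_0}v + \langle Bx,Dv\rangle - f$, with $f:=\psi\K u + 2\langle D_{m_0}u,D_{m_0}\psi\rangle$, multiply by $v_+$, and integrate over $\R^N\times(-\r^2,T)$ for an arbitrary $T\in(-\r^2,0]$. Using $v_+\partial_t v=\partial_t(v_+^2/2)$, the divergence-free character of the drift ($\trace B = 0$ in the canonical form \eqref{B}, so $\int_{\R^N}\langle Bx,D(v_+^2/2)\rangle\,dx = 0$), spatial integration by parts for the Laplacian piece, and the vanishing initial trace, I would obtain the energy identity
\[
	\tfrac{1}{2}\|v_+(\cdot,T)\|_{L^2(\R^N)}^2 + \int_{-\r^2}^{T}\!\!\!\int_{\R^N}|D_{m_0}v_+|^2
	\;=\; -\int_{-\r^2}^{T}\!\!\!\int_{\R^N} v_+\, f .
\]
This identity is first established for smooth approximations (density of $C^\infty(\overline\O)$ in $\W$, plus mollification in time) and then passed to the limit.

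Next, I would unfold the right-hand side using $\K u = \Delta_{m_0}u + Yu$. Integration by parts on the $\Delta_{m_0}u$ contribution produces $\int\psi\,D_{m_0}v_+\cdot D_{m_0}u + \int v_+\,D_{m_0}\psi\cdot D_{m_0}u$; one of the latter terms cancels one half of the contribution $2\langle D_{m_0}u,D_{m_0}\psi\rangle$. The $Yu$ piece becomes a duality pairing $-\langle\psi v_+,Yu\rangle_{L^2H^1,\,L^2H^{-1}}$, which is legitimate since $\psi v_+$ has compact $x^{(0)}$-support inside $B_{2R}$ and therefore lies in $L^2(\,\cdot\,;H^1_{c,x^{(0)}})$. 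Combining Young's inequality with small parameters and the bound
\[
	\|\psi v_+\|_{L^2H^1}\;\le\;C\bigl(\|D_{m_0}\psi\|_\infty\bigr)\bigl(\|D_{m_0}v_+\|_{L^2(\Q_{ext})}+\|v_+\|_{L^2(\Q_{ext})}\bigr),
\]
and absorbing the $\|D_{m_0}v_+\|^2$ contributions into the left-hand side, I arrive at
\[
	\|v_+(\cdot,T)\|_{L^2(\R^N)}^2 \;\le\; C\bigl(\|D_{m_0}u\|_{L^2(\Q_{ext})}^2 + \|Yu\|_{L^2H^{-1}(\Q_{ext})}^2\bigr) + C\!\int_{-\r^2}^{T}\|v_+(\cdot,s)\|_{L^2(\R^N)}^2\,ds .
\]

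Finally, since $v_+(\cdot,-\r^2)=0$, Gronwall's lemma yields $\sup_{T\in(-\r^2,0]}\|v_+(\cdot,T)\|_{L^2(\R^N)}^2\le Ce^{C\r^2}(\|D_{m_0}u\|_{L^2(\Q_{ext})}^2+\|Yu\|_{L^2H^{-1}(\Q_{ext})}^2)$, and integrating over $T\in(-\r^2,0)$ gives
\[
	\|(u-h)_+\|_{L^2(\Q_1)}^2\;\le\;\|v_+\|_{L^2(\R^N\times(-\r^2,0))}^2\;\le\; C\bigl(\|D_{m_0}u\|_{L^2(\Q_{ext})}^2+\|Yu\|_{L^2H^{-1}(\Q_{ext})}^2\bigr),
\]
with $C$ depending only on $\r^2$ and $\|D_{m_0}\psi\|_\infty$, as required. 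The main technical obstacle is the rigorous justification of the energy identity for an element of $\W$, since $\K u$ is not a function but lives in a dual space and the chain rule $v_+\partial_t v=\partial_t(v_+^2/2)$ requires some regularity; both issues are handled by a standard mollification-plus-density argument, together with the representation of $Yu$ as $\div_{m_0}H_1+H_0$ provided by Remark~\ref{remark-Y}, which turns the duality pairing into ordinary $L^2$-inner products amenable to integration by parts.
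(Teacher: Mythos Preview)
Your argument is correct and follows the same route as the paper: set $v=u\psi-h$, compute $\K v$, and run an $L^2$ energy estimate by testing against the positive part. The only notable differences are cosmetic: the paper immediately rewrites $Yu=\div_{m_0}H_1+H_0$ via Remark~\ref{remark-Y} so that the source becomes $\div_{m_0}\tilde H_1+\tilde H_0$, it tests against $2(g-h)_+\psi^2$ with an \emph{extra} cutoff $\psi^2$ (so compact spatial support makes the drift integration by parts $\int\langle Bx,D(\cdot)\rangle\,dx=0$ automatic, whereas your version relies on the Gaussian-type decay of $h$ at infinity), and it integrates the pointwise-in-$T$ estimate over $T\in(-\r^2,0)$ instead of invoking Gronwall.
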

\noindent
This 
local weak Poincaré inequality is an extension to operator $\L$ and to the space $\W$ of the one proved in \cite{GI} and a simplification of 
the one proved in \cite{WZ4, WZ3}. Moreover, the result holds true for any cylinder of the form $\Q_{ext}=B_{R_0}\times\ldots\times B_{R_\kappa}\times(-\r^2,0]$, provided that $\overline{\Q_1}\subset \Q_{ext}$. The proof of Lemma \ref{weak-poincare-h} is
mainly based on the properties of the principal part operator $\K$ and of $\W$.

\medskip

\textit{Proof of Theorem \ref{weak-poincare-h}.}
In virtue of Remark \ref{remark-Y}, the function $g:=u\psi$ satisfies the following equation in the sense of distributions
\begin{equation*}
\K g = u \K \psi + \div_{m_{0}} \tilde{H}_1+\tilde{H}_0, \quad \textrm{in $\mathcal{D}'(\mathbb{R}^N \times (-\r^2,0))$},
\end{equation*}
where $\tilde{H}_1=(H_1+D_{m_0} u)\psi$ and $\tilde{H}_0=H_0\psi -H_1 D_{m_0} \psi + \langle D_{m_0} \psi, D_{m_0} u \rangle$. Thus, owing to \eqref{defpsi}, we obtain
\begin{equation}\label{Kg-h}
\K (g-h) =  \div_{m_{0}} \tilde{H}_1+\tilde{H}_0=:\tilde{H}, \quad \textrm{in $\mathcal{D}'(\mathbb{R}^N \times (-\r^2,0))$}.
\end{equation}
Now, choosing $2(g-h)_+ \psi^2$ as a test function in \eqref{Kg-h} and integrating on the domain $D=\mathbb{R}^N \times [-\r^2,0]$, we get
\begin{align}\nonumber
&2\int_D \left\vert D_{m_0}\right\vert^2 \psi^2  +\boxed{4\int_D \psi(g-h)_+\langle D_{m_0}((g-h)_+),D_{m_0}(\psi)\rangle}_A -\boxed{2 \int_D (g-h)_+\psi^2 Y((g-h)_+)}_B\\ \label{boxedtermsp}
&\qquad\qquad\qquad\qquad\qquad + \boxed{2\int_D (g-h)_+\psi^2 D_{m_0} \cdot \tilde{H}_1}_C+\boxed{2\int_D (g-h)_+\psi^2 \tilde{H}_0}_D=0.
\end{align}
We estimate the boxed term $A$ by applying Young's inequality \eqref{youngA} and choosing $\varepsilon =1$. As far as we are concerned with the boxed term $B$, we rewrite it as
\begin{align*}
-\boxed{2 \int_D (g-h)_+\psi^2 Y((g-h)_+)}_B&=-\int_D \psi^2 Y_0((g-h)_+^2)+\int_D \psi^2 \partial_t((g-h)_+^2)\\
&=-\int_D [\psi^2 Y_0((g-h)_+^2)+\int_D \partial_t((g-h)_+^2\psi^2)- (g-h)_+^2 \partial_t(\psi^2)] \\
&=\int_D (g-h)_+^2 Y(\psi^2)+\int_D  \partial_t((g-h)_+^2\psi^2),
\end{align*}
where in the first line we defined $Y_0=\langle Bx, D \rangle$, in the second line we used the equality $\p_t((g-h)_+^2 )\psi^2 = \p_t((g-h)_+^2 \psi^2) - \p_t(\psi^2)(g-h)_+^2 $ and in the third we integrated by parts the term involving $Y_0$. Finally, we take care of the boxed term $C$ and $D$ using Young's inequality as follows
\begin{align*}
\boxed{2\int_D (g-h)_+\psi^2 D_{m_0} \cdot \tilde{H}_1}_C &=- 2\int_D \langle D_{m_0}((g-h)_+\psi^2), \tilde{H}_1 \rangle \\
&\leq \frac{1}{2}\Vert \psi D_{m_0}(g-h)_+ \Vert_{L^2(D)}+\frac{1}{2}\Vert (g-h)_+ D_{m_0}\psi \Vert_{L^2(D)}+10\Vert \tilde{H}_1 \Vert_{L^2(D)},\\
\boxed{2\int_D (g-h)_+\psi^2 \tilde{H}_0}_D &\leq 2\varepsilon \Vert (g-h)_+ \Vert_{L^2(D)}+\frac{1}{2\varepsilon}\Vert \tilde{H}_0 \Vert_{L^2(D)}.
\end{align*}

Combining the previous estimates, for every $T \in (-\r^2,0)$ and $\varepsilon >0 $ to be chosen later, we rewrite \eqref{boxedtermsp} as 
\begin{align*}
&\int_{-\r^2}^T \int_D  \partial_t((g-h)_+^2\psi^2)dx dt+2\int_D \left\vert D_{m_0}(g-h)_+\right\vert^2 \psi^2+\int_D  (g-h)_+^2 Y(\psi^2)\\
&\qquad \qquad \leq \frac{1}{2}\Vert D_{m_0}(g-h)_+ \psi\Vert_{L^2(D)}+10\Vert \tilde{H}_1 \Vert_{L^2(D)}+2\varepsilon \Vert (g-h)_+ \Vert_{L^2(D)}+\frac{1}{2\varepsilon}\Vert \tilde{H}_0 \Vert_{L^2(D)}.
\end{align*}
We now apply the fundamental theorem of calculus to the term involving the time derivative and we infer
\begin{align} \label{lwp1}
& \int_D  ((g-h)_+^2\psi^2)(x,T)dx+2\int_D \left\vert D_{m_0}(g-h)_+\right\vert^2 \psi^2+\int_D  (g-h)_+^2 Y(\psi^2)\\  \nonumber
&\quad\leq \frac{1}{2}\Vert D_{m_0}(g-h)_+ \psi\Vert_{L^2(D)}+10\Vert \tilde{H}_1 \Vert_{L^2(D)}+2\varepsilon \Vert (g-h)_+ \Vert_{L^2(D)}+\frac{1}{2\varepsilon}\Vert \tilde{H}_0 \Vert_{L^2(D)}. \nonumber
\end{align}
We then integrate in $T$ from $-\r^2$ to 0 and we obtain
\begin{align}  \label{lwp2}
& \Vert (g-h)_+\psi\Vert_{L^2(D)}+\r^2\int_D  (g-h)_+^2 Y(\psi^2)\\ \nonumber
&\quad\leq -\frac{3}{2}\r^2\Vert D_{m_0}(g-h)_+ \psi\Vert_{L^2(D)}+10\r^2\Vert \tilde{H}_1 \Vert_{L^2(D)}+2\r^2\varepsilon \Vert (g-h)_+ \Vert_{L^2(D)}+\frac{\r^2}{2\varepsilon}\Vert \tilde{H}_0 \Vert_{L^2(D)}\\ \nonumber
&\quad\leq  10\r^2\Vert \tilde{H}_1 \Vert_{L^2(D)}+2\r^2\varepsilon \Vert (g-h)_+ \Vert_{L^2(D)}+\frac{\r^2}{2\varepsilon}\Vert \tilde{H}_0 \Vert_{L^2(D)}. \nonumber
\end{align}
We later observe that $(g-h)_+\psi$ equals $(u-h)_+$ and $Y(\psi^2)$ equals 0 in $\Q_1$. In addition, the following estimates hold
\begin{align}  \label{lwp3}
\Vert \tilde{H}_0 \Vert_{L^2(D)} &\leq \Vert H_0 \Vert_{L^2(\Q_{ext})}+\Vert D_{m_0}\psi \Vert_{L^\infty(\Q_{ext})}\left(\Vert D_{m_0}u \Vert_{L^2(\Q_{ext})}+\Vert H_1 \Vert_{L^2(\Q_{ext})}\right),\\ \nonumber
\Vert \tilde{H}_1 \Vert_{L^2(D)} &\leq \Vert H_1 \Vert_{L^2(\Q_{ext})}+\Vert D_{m_0}u \Vert_{L^2(\Q_{ext})}. \nonumber
\end{align}
By combining \eqref{lwp1}, \eqref{lwp2} and \eqref{lwp3} and choosing $\varepsilon=\frac{1}{4\r^2}$ the claim is proved.
$\hfill \square$

\medskip

Given the local Poincaré inequality proved in Lemma \ref{weak-poincare-h}, we just need to estimate the function $h$ defined in \eqref{defpsi} in order to complete the proof of Theorem
\ref{weak-poincare}. In particular, our aim is to show that the error function $h$ is bounded from above by $\vartheta_0 M$, where $\vartheta_0 \in (0,1)$ is a constant only depending on $Q$, $\l$ and $\Lambda$. In order to prove such result, we first need to explicitly construct an appropriate cut-off function, that differs from the one considered in \cite[Lemma 3.3]{GI} due to the more involved structure of our drift term $Y$. 
We note that our construction of the suitable cut-off function is constructive, in contrast with the one proposed in \cite{GI}. 
%if $f$ satisfies 
%\begin{eqnarray*}
%\left\vert \lbrace f=0 \rbrace \cap \Q_{zero}\right\vert \geq \frac{1}{4}\left\vert  \Q_{zero}\right\vert,
%\end{eqnarray*}
\begin{lemma} \label{coc}
Given $\eta \in (0,1]$ and $T \in (0,\eta^2)$, there exists a smooth function $\psi_1:\mathbb{R}^{N}\times[-1-\eta^2,0]$, supported in $\lbrace (x,t) : |x_j|\leq 2^{\alpha_j}, j=1,\ldots,N, t \in [-1-\eta^2 ,0]\rbrace$, equal to 1 in $\Q_1$, and such that the following conditions hold
\begin{equation}\label{conditions}
\begin{split}
Y\psi_1 &\leq 0 \quad\textrm{everywhere}\\  
Y \psi_1 &\leq -1 \quad\textrm{if $t \in (-1-\eta^2,-1-T]$}.
\end{split}
\end{equation}
\end{lemma}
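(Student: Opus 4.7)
The natural ansatz is
\begin{equation*}
	\psi_1(x,t)=\phi(t)\,\chi(e^{tB}x),
\end{equation*}
with $\phi\in C^\infty(\R;[0,1])$ a time cutoff and $\chi\in C^\infty(\R^N;[0,1])$ a spatial cutoff; I would set $y:=e^{tB}x$ and exploit the commutation $Be^{tB}=e^{tB}B$. A direct chain-rule computation gives $\langle Bx,D_x\psi_1\rangle=\phi(t)\langle Bx,(e^{tB})^{T}D\chi(y)\rangle=\phi(t)\langle By,D\chi(y)\rangle$, which cancels exactly the corresponding contribution $\phi(t)\langle By,D\chi(y)\rangle$ produced by $\partial_t[\chi(e^{tB}x)]$, leaving the clean identity
\begin{equation*}
	Y\psi_1(x,t)=-\phi'(t)\,\chi(e^{tB}x).
\end{equation*}
Once this identity is in hand the two inequalities in \eqref{conditions} decouple: the first amounts to requiring $\phi'\ge 0$, and the second to requiring $\phi'(t)\chi(e^{tB}x)\ge 1$ on the relevant region.

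For $\phi$ I would take a standard smoothing: non-decreasing, with $\phi\equiv 0$ on $(-\infty,-1-\eta^2]$ and $\phi\equiv 1$ on $[-1,+\infty)$, and $\phi'\ge 1$ on $(-1-\eta^2,-1-T]$. This is feasible because that interval has length $\eta^2-T<\eta^2\le 1$: a (smoothed) slope-$1$ ramp there raises $\phi$ from $0$ to a value $\le 1$, which can then be smoothly completed to $1$ over the remaining interval $(-1-T,-1]$.

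To build $\chi$ I would use the canonical form \eqref{B}. Since $B$ is block strictly lower triangular with $B^{\kappa+1}=0$, $e^{tB}$ is polynomial in $t$ and compatible with the dilations \eqref{gdil}: the algebraic identity $2(j-i)+\alpha_i=\alpha_j$ gives, for every $r>0$, the estimate $|(e^{tB}x)^{(j)}|\le C(B)\,r^{\alpha_j}$ whenever $|x^{(k)}|\le r^{\alpha_k}$ for all $k$ and $|t|\le r^2$, with the symmetric statement for $e^{-tB}$. I would then take $\chi(y)=\prod_j\tilde\chi_j(|y^{(j)}|)$ with each $\tilde\chi_j$ a smooth radial nonincreasing cutoff equal to $1$ on $[0,R_j]$ and vanishing past $S_j$, and fix the thresholds $R_j<S_j$ recursively in $j$ so that (i) the $1$-set $\prod_j\{|y^{(j)}|\le R_j\}$ contains $e^{tB}(\Q_1)$ and the image of the core slice $\{|x_j|\le\eta^{\alpha_j}\}\times(-1-\eta^2,-1-T]$ (which secures both $\psi_1\equiv 1$ on $\Q_1$ and $Y\psi_1\le -1$ on the core), and (ii) the support $\prod_j\{|y^{(j)}|\le S_j\}$ pulls back through $e^{-tB}$ into $\{|x^{(j)}|\le 2^{\alpha_j}\}$ uniformly in $t\in[-1-\eta^2,0]$ (which secures the prescribed support).

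The main obstacle is precisely this recursive compatibility: since the blocks of $e^{\pm tB}x$ mix all lower blocks of $x$, the $R_j$ and $S_j$ have to be tuned block-by-block starting from $j=0$. Thanks to the homogeneity identity $2(j-i)+\alpha_i=\alpha_j$ and to the compactness of the time window $[-1-\eta^2,0]$ with $\eta\le 1$, this reduces to a finite explicit system of inequalities depending only on the matrix $B$, which can be solved by induction on $j$.
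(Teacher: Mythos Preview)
Your identity $Y\psi_1=-\phi'(t)\chi(e^{tB}x)$ is correct and elegant, but the claim that the inductive system for the thresholds $R_j<S_j$ ``can be solved by induction on $j$'' fails in general, and this is a genuine gap. For your ansatz to satisfy both $\psi_1\equiv 1$ on $\Q_1$ and $\mathrm{supp}\,\psi_1\subset\{|x_j|\le 2^{\alpha_j}\}$, one needs
\[
\bigcup_{t\in[-1,0]} e^{tB}\big(\textstyle\prod_j B_1\big)\ \subset\ \bigcap_{s\in[-1-\eta^2,0]} e^{sB}\big(\{|x_j|\le 2^{\alpha_j}\}\big),
\]
which is equivalent to $e^{\tau B}\big(\prod_j B_1\big)\subset\{|x_j|\le 2^{\alpha_j}\}$ for all $|\tau|\le 1+\eta^2$. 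Already in the simplest case $\kappa=1$, $m_0=m_1=1$, this reads $1+|\tau|\,\|B_1\|\le 2^{3}=8$ for $|\tau|$ up to $2$, i.e.\ $\|B_1\|\le 7/2$, which is not guaranteed by \textbf{(H2)}. Your homogeneity estimate $|(e^{tB}x)^{(j)}|\le C(B)\,r^{\alpha_j}$ is correct, but it carries the constant $C(B)$ on \emph{both} sides (forward for the $1$-set, backward for the support), so the gap between the unit box and the $2^{\alpha_j}$-box does not absorb it when $C(B)$ is large.

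The paper's construction sidesteps exactly this obstruction by \emph{not} composing with $e^{tB}$. It writes $\psi_1=\chi(\|x^{(0)}\|)\,\chi_0(x,t)\,\chi_t(t)$ with $\chi_0(x,t)=\chi\big(g(x)-Ct\big)$, where $g$ is a fixed weighted quadratic form in the non-diffusive coordinates $x_{m_0+1},\dots,x_N$. One computes $Y\chi_0=\chi'(\cdot)\big[\langle Bx,Dg\rangle+C\big]$, and since the bilinear form $\langle Bx,Dg\rangle$ is bounded on the support $\{g(x)\le \text{const}\}$, a single choice of $C=C(B)$ makes the bracket nonnegative and hence $Y\chi_0\le 0$. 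The crucial point is that for $t\le 0$ the support condition $g(x)-Ct\le\text{const}$ implies $g(x)\le\text{const}$, which pins the spatial support inside the fixed box $\{|x_j|\le 2^{\alpha_j}\}$ \emph{independently of $C$}. In other words, the paper decouples the sign control (absorbed in $C$) from the support control (absorbed in $g$), whereas your construction couples them through $e^{tB}$. Your approach would go through if you relaxed the support to $\{|x_j|\le M(B)^{\alpha_j}\}$, which is in fact enough for the subsequent rescaling in the localization lemma; but it does not prove the lemma as stated.
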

\begin{proof}
Let us consider the cut-off function $\chi \in C^\infty([0,+\infty))$ defined by
\begin{equation}\label{chi2}
\chi(s)=\left\{ \begin{array}{ll}
0,\quad &\textrm{if $s > \frac{2}{\sqrt{2}}$},\\
1,\quad &\textrm{if $0\leq s \leq C+1$},
\end{array} \right. \quad \chi'\leq 0,
\end{equation}
where $C >1$ is a constant we shall specify later on. In addition, we consider a smooth function $\chi_t : [-1-\eta^2,0]\rightarrow [0,1]$ equal to 1 in $[-1,0]$, with $\chi_t(-1-\eta^2)=0$, $\chi_t' \geq 0$ in $[-1-\eta^2,0]$ and $\chi_t'=1$ in $[-1-\eta^2,-1-T]$. Now, setting 
\begin{eqnarray*}
\chi_0(x,t)=\chi\left(\sum_{j=m_0}^N \frac{2 x_j^2}{2^{2\alpha_j}\sqrt{2}}-C \, t\right),
\end{eqnarray*}
we define the cut-off function $\psi_1$ as follows
\begin{eqnarray*}
\psi_1(x,t)=\chi(\Vert (x_1,\ldots,x_{m_0})\Vert)\chi_0(x,t)\chi_t(t).
\end{eqnarray*}
We only have to check that conditions \eqref{conditions} hold, as the other desired properties immediately follow from the definition of $\psi_1$. To this end, we compute the following derivative
\begin{eqnarray*}
Y\chi_0=\chi'((...))\left[\sum_{i=1}^N\sum_{j>m_0}2x_ib_{ij}x_j2^{-2\alpha_j-1/2}+C\right],
\end{eqnarray*}
where $(...)$ denotes $\left(\sum_{j=m_0}^N \frac{2 x_j^2}{2^{2\alpha_j}\sqrt{2}}-C \, t\right)$. It can be shown (see \cite{WZ3}) that it is well defined a certain constant $C>1$ such that
\begin{eqnarray*}
C\geq \sum_{i=1}^N\sum_{j>m_0}2x_ib_{ij}x_j2^{-2\alpha_j-1/2}.
\end{eqnarray*}
Thus, with such a choice of $C$ and keeping in mind that $\chi_t' \geq 0$ in $[-1-\eta^2,0]$ and $\chi_t'=1$ in $[-1-\eta^2,-1-T]$, we have
\begin{align*}
&Y\psi_1= \chi \,\chi_t \, Y \chi_0 - \chi\, \chi_0 \, \chi'_t  \leq 0 \qquad\textrm{everywhere},\\
&Y\psi_1= \chi \,\chi_t \, Y \chi_0 - \chi\, \chi_0 \, \chi'_t  \leq -1 \quad\textrm{if $t \in (-1-\eta^2,-1-T]$}.
\end{align*}
\end{proof}
Thus, we are now in a position to state and prove the following result regarding the control of the localization term $h$
defined in \eqref{defpsi}.
\begin{lemma} \label{localizationlem}
Let $\eta \in (0,1]$ and let $\Q_{ext}$ be as defined in \eqref{Qzero}. then there exist $R=R(Q,\eta)>1$, $\vartheta_0=\vartheta_0(Q,\eta) \in (0,1)$ and a $C^\infty$ cut-off function $\psi: \mathbb{R}^{N+1} \rightarrow [0,1]$, with support in $\Q_{ext}$ and equal to 1 in $\Q_1$, such that for all $u \in \W$ non-negative bounded functions defined on $\Q_{ext}$,
%and satisfying
%\begin{eqnarray*}
%\left\vert \lbrace f=0 \rbrace \cap \Q_{zero}\right\vert \geq \frac{1}{4}\left\vert  \Q_{zero}\right\vert,
%\end{eqnarray*}
 the function $h$ solution to the Cauchy problem \eqref{defpsi} with $\r^2=1+\eta^2$ satisfies
\begin{equation}\label{est-h}
h \leq \vartheta_0 \Vert u \Vert_{L^\infty(\Q_{ext})}.
\end{equation}
\end{lemma}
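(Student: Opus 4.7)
I take $\psi := \psi_1$ as the cut-off produced by Lemma \ref{coc} for a parameter $T \in (0,\eta^2)$ to be chosen below: by construction $\psi \in C^\infty(\R^{N+1})$ is valued in $[0,1]$, supported in $\{|x_j| \le 2^{\alpha_j}\} \times [-1-\eta^2,0] \subset \Q_{ext}$ for every $R \ge 1$, equals $1$ on $\Q_1$, vanishes identically at $t = -\rho^2 = -1-\eta^2$, and satisfies $Y\psi \le 0$ everywhere with $Y\psi \le -1$ on the slab $S := \R^N \times (-1-\eta^2,-1-T]$. The strategy is to use the fundamental solution $\Gamma$ of $\K$, cf.~\eqref{eq-Gamma0-b}, to obtain a Duhamel representation for $h$, and then to convert the sign information on $\K\psi = \Delta_{m_0}\psi + Y\psi$ into a pointwise lower bound for the comparison function $w := M\psi - h$, where $M := \|u\|_{L^\infty(\Q_{ext})}$; the parameter $R = R(Q,\eta)$ will be fixed at the end via the anisotropic Gaussian decay of $\Gamma$.

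\textbf{Representation and reduction.} Since $\psi(\cdot,-\rho^2) = 0$, Duhamel against $\Gamma$ gives
\begin{equation*}
h(z) = \int_{-\rho^2}^{t}\!\!\int_{\R^N} \Gamma(\zeta^{-1}\!\circ z)\,u(\zeta)\,\K\psi(\zeta)\,d\zeta,
\qquad
\psi(z) = \int_{-\rho^2}^{t}\!\!\int_{\R^N} \Gamma(\zeta^{-1}\!\circ z)\,\K\psi(\zeta)\,d\zeta,
\end{equation*}
so the comparison function $w := M\psi - h$ satisfies $\K w = (M-u)\K\psi$ with $w(-\rho^2,\cdot) \equiv 0$. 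On $\Q_1$, where $\psi \equiv 1$, the claim $h \leq \vartheta_0 M$ is equivalent to $w \ge (1-\vartheta_0)M$. Because $M - u \ge 0$, the source $(M-u)\K\psi$ inherits the sign of $\K\psi$. From Lemma \ref{coc} and the explicit form of $\psi_1$, the quantity $\|\Delta_{m_0}\psi\|_{L^\infty}$ is controlled by a constant $K_0 = K_0(Q,\eta)$; choosing $T$ small enough so that $K_0 < 1/2$, I obtain a subset $S_- \subset S$ of definite measure on which $\K\psi \le -c_0 < 0$ with $c_0 = c_0(Q,\eta)$.

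\textbf{Pointwise lower bound and main obstacle.} For $z \in \Q_1$ and $\zeta \in S_-$ the time gap $t-\tau$ is bounded below away from zero (by construction $t > -1 > -1-T \ge \tau$), so condition \eqref{Cpositive} applied to \eqref{eq-Gamma0-b} yields a uniform pointwise lower bound $\Gamma(\zeta^{-1}\circ z) \ge \gamma_0(Q,\eta) > 0$. Substituting into the representation of $w$ and using $M - u \ge 0$ extracts a definite positive contribution on $S_-$ of order $c_0\,\gamma_0\,|S_-|\,M$. The main obstacle is the positive part of the source: $\Delta_{m_0}\psi$ can be large and positive near the lateral boundary of $\supp\psi$, and one cannot make $\|\Delta_{m_0}\psi\|_\infty$ itself small since $\psi$ must transition from $1$ to $0$ in finite distance. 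The resolution is to pick $R = R(Q,\eta)$ large enough so that the non-Euclidean Gaussian decay of $\Gamma(\zeta^{-1}\circ z)$ for $z \in \Q_1$ and $\zeta$ near $\partial\supp\psi$ forces the contribution of $(\K\psi)_+$ to be of order $\exp(-cR^2)$; quantitative pointwise bounds for $\Gamma$ on the homogeneous balls of $\KK$, coming from \eqref{eq-Gamma0-b} and \eqref{Cpositive}, are the technical core here. Balancing this tail estimate against the negative contribution on $S_-$ finally yields the explicit constant $\vartheta_0 = \vartheta_0(Q,\eta) \in (0,1)$ and the inequality \eqref{est-h}.
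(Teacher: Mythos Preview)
There is a genuine gap in the argument, and it lies precisely where you identify the ``main obstacle''. You take $\psi:=\psi_1$ from Lemma~\ref{coc} \emph{without any rescaling}. By construction $\psi_1$ is supported in the fixed box $\{|x_j|\le 2^{\alpha_j}\}\times[-1-\eta^2,0]$; the parameter $R$ only enters as the size of the ambient set $\Q_{ext}$, not the support of $\psi$. Consequently, for $z\in\Q_1$ and $\zeta$ in the transition region of $\psi$ (where $(\K\psi)_+$ lives), the spatial separation $\|\zeta^{-1}\circ z\|$ is of order~$1$, independent of $R$, and the Gaussian factor in $\Gamma(\zeta^{-1}\circ z)$ gives only a fixed constant, not $\exp(-cR^2)$. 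Your proposed balancing of the positive Laplacian contribution against the negative $Y\psi$ contribution therefore has no small parameter and cannot produce $\vartheta_0<1$. A secondary issue: $\Delta_{m_0}\psi_1$ depends only on the $x^{(0)}$-cutoff $\chi(\|x^{(0)}\|)$ in the factorisation of $\psi_1$, which is entirely independent of $T$; so the claim ``choosing $T$ small enough so that $K_0<1/2$'' is unfounded.

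The paper resolves this by the simple but decisive move of \emph{rescaling} the cut-off: set $\psi(x,t):=\psi_1(x/R,t)$. Since the drift $Y=\langle Bx,D\rangle-\partial_t$ is homogeneous of degree $0$ under the isotropic dilation $x\mapsto x/R$, one has $Y\psi(x,t)=(Y\psi_1)(x/R,t)$, so the sign properties of Lemma~\ref{coc} are preserved, while $\Delta_{m_0}\psi=R^{-2}(\Delta_{m_0}\psi_1)(x/R,t)$ now carries the small factor $R^{-2}$. Writing $h-\psi=E_R+N_R$ with $\K E_R=(u-1)R^{-2}\Delta_{m_0}\psi_1$ and $\K N_R=(u-1)Y\psi_1$, the maximum principle bounds $E_R$ by $C'/R^2$, and comparison with the explicit potential of $-\mathbb{I}_{\mathcal Z}$ bounds $N_R$ above by $-\delta_0$ on $\Q_1$; choosing $R$ large enough that $C'/R^2\le\delta_0/2$ yields $h\le 1-\delta_0/2=:\vartheta_0$. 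Your $S_-$ / Duhamel lower-bound argument is morally the $N_R$ part of this splitting, but the mechanism you propose for controlling the Laplacian part does not work without the rescaling.
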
 
\begin{proof}
We assume that $u$ is not identically vanishing in $\Q_{ext}$. Indeed, if $u=0$ in $\Q_{ext}$, then $h=0$ and inequality \eqref{est-h} is trivially satisfied. Moreover, we can reduce to the case of a function $u$ with $L^\infty$- norm equal to 1 by taking $u/\Vert u \Vert_{L^\infty(\Q_{ext})}$.

We now fix $T=\eta^2/2$; then, $$|\Q_{zero}\cap \lbrace t \leq -1-T \rbrace|=\frac{1}{2}|\Q_{zero}|.$$
We now consider the cut-off function
\begin{eqnarray*}
\psi(x,t)=\psi_1(x/R,t),
\end{eqnarray*}
whew $R>1$ is a constant we will specify later and $\psi_1$ is given by Lemma \ref{coc}. We observe that, by definition of $\psi_1$, $\psi$ is supported in $\Q_{ext}$ and equal to 1 in $\lbrace (x,t) : |x_j|\leq R, j=1,\ldots,N, t \in (-1 ,0]\rbrace$. In addition, it satifies
\begin{eqnarray*}
\K \psi(x,t)=R^{-2}\Delta_{m_0}\psi_1(x/R,t)+Y\psi_1(x/R,t).
\end{eqnarray*}
Thus, in virtue of \eqref{defpsi}, we have
\begin{eqnarray*}
\K (h-\psi)=\frac{u-1}{R^2}\Delta_{m_0}\psi_1(x/R,t)+(u-1)Y\psi_1(x/R,t)
\end{eqnarray*}
and we can write the difference $h-\psi$ as
\begin{eqnarray}\label{diffhpsi}
h-\psi = E_R +N_R,
\end{eqnarray}
where $E_R$ and $N_R$ are solutions in $\mathbb{R}^N \times (-1-\eta^2,0) $ to the following Cauchy problems
\begin{align*}
\K E_R &=\frac{u-1}{R^2}\Delta_{m_0}\psi_1(x/R,t),\\
\K N_R &=(u-1)Y\psi_1(x/R,t),
\end{align*}
with $E_R=P_R=0$ at the initial time. We first focus on the term involving $E_R$, and we remark that 
\begin{eqnarray}\label{stimaER}
\K E_R \leq \frac{ C'}{ R^{2}},
\end{eqnarray}
where $C'=\Vert \Delta_{m_0}\psi_1 \Vert_{L^\infty}$ is a constant only depending on $Q$, $\lambda$, $\Lambda$ and $\eta$. As far as the term involving $N_R$ is concerned, we observe that, owing to $Y\psi_1 \leq -1 $ for $t \in (-1-\eta^2,-1-T)$, we have
\begin{eqnarray*}
\K N_R \leq -\mathbb{I}_{\mathcal{Z}}, \quad \textrm{in $\mathbb{R}^N \times (-1-\eta^2,0) $},
\end{eqnarray*}
where $\mathcal{Z}:= \Q_{zero} \cap \lbrace t \leq -1-T \rbrace$. Let $N$ such that $\K N=-\mathbb{I}_{\mathcal{Z}}$ in $\mathbb{R}^N \times (-1-\eta^2,0) $ and $N=0$ at the initial time $t=-1-\eta^2$. Then, the maximum principle \cite{Bony69} for the principle part operator $\K$ yields
\begin{eqnarray*}
P \leq N_R, \quad \textrm{in $\Q_1$}.
\end{eqnarray*}
We now represent $N$ in using the fundamental solution $\Gamma$ of $\K$ and we infer
\begin{eqnarray*}
N(z)=\int \Gamma(z,\zeta) \left(-\mathbb{I}_{\mathcal{Z}}(\zeta)\right) d\zeta \leq -\frac{1}{2} m |\Q_{zero}|=:-\delta_0,
\end{eqnarray*}
where $m=\min_{\Q_1 \times \Q_{zero}\cap\lbrace t \leq -1-T \rbrace}\Gamma(z,\zeta)$. As a consequence,
\begin{eqnarray}\label{stimaNR}
N_R \leq -\delta_0,
\end{eqnarray}
for a constant $\delta_0$ only depending on $Q$ and $\eta$. Using estimates \eqref{stimaER} and \eqref{stimaNR} in \eqref{diffhpsi}, we finally obtain
\begin{eqnarray*}
h \leq 1 -\delta_0 + \frac{C'}{R^2}.
\end{eqnarray*}
We now observe that for $R$ large enough we have $C/R^2 \leq \delta_0/2$. Thus, setting $\vartheta_0=1-\delta_0/2 <1$, we get the desired inequality \eqref{est-h}.
\end{proof}

\setcounter{equation}{0}\setcounter{theorem}{0}
\section{Main results} 
\label{harnack}
This section is devoted to the proof of our main results. The approach we present here is an extension of the 
method inspired by \cite{Kruz1,Kruz2} and then followed by Guerand and Imbert in \cite{GI} for the kinetic Kolmogorov-Fokker-Planck equation. 
In particular, we remark that an analogous approach based on a weak Poincaré type inequality was firstly 
introduced by Wang and Zhang in \cite{WZ4, WZ3} for the Kolmogorov equation $\L u = 0$ under the assumption 
$Y u \in L^{2}$, and thus with a stronger notion of weak solution, and with a different $\log-$transform. 
The main advantage of our approach is that it only relies on 
the structure of the function space $\W$ to which every weak solution belongs to and on the non-Euclidean 
geometrical structure presented in Section \ref{preliminaries} behind the operator $\L$. To our knowledge, 
it is the first time that the study of the weak regularity theory is carried on replacing the classical $L^{Q+2}$ integrability assumptions for the lower order coefficients $b$, $c$ and the non zero right-hand 
side $f$ with \textbf{(H3)}.

\subsection{Weak Harnack inequality}
First of all we address the proof of the weak Harnack inequality (Theorem \ref{weak-harnack}) that relies on 
combining the fact that super-solutions to \eqref{defL} expand positivity along times (Lemma \ref{expand-pos}) with 
the covering argument presented in Appendix \ref{stacked}. 

The derivation of the weak Harnack inequality in the present paper from the expansion of positivity follows very 
closely the reasoning in \cite{IS-weak} and extends the results presented in \cite{GI} for the Kolmogorov-Fokker-
Planck case. For reader's convenience, we here state (and adapt to our more involved case) the results contained in \cite[Section 4]{GI}, sketching their proofs only when they differ from the ones contained in the aforementioned paper.

We observe that, in contrast with parabolic equations, it is not possible to apply a classical Poincaré inequality
in the spirit of \cite{M1}. Indeed, in our case there is a positive quantity replacing the avarage in the usual Poincar\`{e} inequality (see the statement of Theorem \ref{weak-poincare}). Following \cite{GI} we circumvent this difficulty by estabilishing a weak expansion of positivity of super-solutions to \eqref{defL}. More precisely, given a small cylider $\Q_{pos}$ lying in the past of $\Q_1$ (see Definition \eqref{Qexth}), we show that the positivity of a non-negative super solution $u$ lying above 1 in a "big" part of $\Q_{pos}$ is spread to the whole $\Q_1$ (see Lemma \ref{expand-pos}). In other words, a positivity in measure in a smaller cylinder $\Q_{pos}$ is transformed into a pointwise positivity in a bigger cylinder $\Q_1$. We emphasize that such a weak expansion of positivity was already proved in \cite{GIMV} thanks to an intermediate value lemma, following De Giorgi's original proof, but the argument was not constructive and specific for the Fokker-Planck equation case. 

Lastly, we mention that Moser \cite{M1} and Trudinger \cite{Trudinger} proved a weak Harnack inequality in the spirit of Theorem \ref{weak-harnack} in the setting of parabolic equations. We also recall that Di Benedetto and Trudinger \cite{dibenedetto} proved that non-negative functions in the elliptic De Giorgi class, which corresponds to super-solutions to elliptic equations, satisfy a weak Harnack inequality. Moreover, let us emphasize that in \cite{GWang} it is proved a weak Harnack for the corresponding parabolic case, i.e. for functions in the parabolic De Giorgi's class. We conclude by observing that quantitative interior H\"{o}lder regularity estimates for functions in the parabolic De Giorgi class (and for parabolic equations with rough coefficients) can be found in \cite{guerand}.

\medskip

We first study how equation \eqref{defL} spreads positivity of super-solutions. More precisely, we state the upcoming Lemma \ref{expand-pos}, given in terms of the cylinders
\begin{align}\label{Qexth}
	\Q_{pos}&=B_{\theta}\times B_{\theta^3}\times\ldots \times B_{\theta^{2\kappa+1}}\times (-1-\theta^2,-1],\\ \nonumber
	\tilde{\Q}_{ext}&=B_{3R}\times B_{3^3R}\times\ldots \times B_{3^{2\kappa+1}R}\times (-1-\theta^2,0],
\end{align}
where $R=R(\theta,Q,\lambda,\Lambda)$ is the constant given by Lemma \ref{localizationlem} and $\theta \in (0,1]$ is a parameter we will choose later on. In particular, $\theta$ will be chosen such that the stacked cylinder $\overline \Q_{pos}^{m}$ (see definition \ref{qm}) is contained in $\Q_1$. To stress the dependence of $R$ on $\theta$, we will sometimes write $R_\theta$ instead of $R$.

\begin{lemma}\label{expand-pos}
Let $\theta \in (0,1]$ and $\Q_{pos}$, $\tilde{\Q}_{ext}$ be the cylinders defined in \eqref{Qexth}. Then there exist a small constant $\eta_0=\eta_0(\theta,Q,\lambda,\Lambda)\in (0,1)$ such that for any non-negative super-solution $u$ of \eqref{defL} in some cylindrical open set $\Omega \supset \tilde{\Q}_{ext}$ such that $\left\vert \lbrace u \geq 1 \rbrace \cap \Q_{pos}\right\vert \geq \frac{1}{2}\left\vert \Q_{pos}\right\vert$, we have $u \geq \eta_0$ in $\Q_1$.
\end{lemma}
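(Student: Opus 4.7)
My plan is to follow the Kruzkov log-transform strategy adapted to the weak setting $\W$, combining the weak Poincaré inequality (Theorem \ref{weak-poincare}) with the Moser boundedness result (Theorem \ref{boundedness}) in a De Giorgi type iteration.

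\emph{Step 1 -- log transform.} I fix a small parameter $\delta>0$ and introduce
\[
w := \Bigl[\log\tfrac{1+\delta}{u+\delta}\Bigr]_{+}.
\]
Then $w$ is non-negative, uniformly bounded above by $M:=\log((1+\delta)/\delta)$, belongs to $\W$ (since $s\mapsto\log\tfrac{1+\delta}{s+\delta}$ is Lipschitz on $[0,\infty)$, so the standard chain rule for $D_{m_0}$ and a bounded-multiplier rule for $Y$ apply), and most importantly $w\equiv 0$ on $\{u\geq 1\}$, so by hypothesis $|\{w=0\}\cap\Q_{pos}|\geq\tfrac12|\Q_{pos}|$. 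A direct Kruzkov-type computation, starting from the super-solution inequality for $u$ and using $\div b\geq 0$ from \textbf{(H3)} to discard a non-negative term, shows that before truncation $-\log(u+\delta)$ is a weak sub-solution of
\[
\L(-\log(u+\delta))\,\geq\,\langle A D(-\log(u+\delta)),D(-\log(u+\delta))\rangle + \tilde g,
\]
for some $\tilde g\in L^{q}(\tilde\Q_{ext})$ with norm controlled by $\|f\|_{L^{q}}$, $\|c\|_{L^{q}}$ and $\delta^{-1}$. Truncating to the positive part preserves the sub-solution property, so $w$ is a weak sub-solution of $\L w=g$ with $g\in L^{q}$, and a Caccioppoli-type inequality in the spirit of Theorem \ref{pcaccf} (obtained by absorbing the non-negative quadratic gradient term on the left-hand side) controls $\|D_{m_0}w\|_{L^{2}}$ and $\|Yw\|_{L^{2}H^{-1}}$ on nested cylinders in terms of $\|w\|_{L^{2}}$.

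\emph{Step 2 -- De Giorgi iteration.} I will then show that $\sup w$ improves along a suitable sequence of cylinders $\Q^{(k)}\subset\tilde\Q^{(k)}$ interpolating between $\Q_{pos}$ and $\Q_1$. For $M_k:=\sup_{\Q^{(k)}}w$, applying the weak Poincaré inequality to $w$ (translated and rescaled so that $\Q_{pos}$ plays the role of the ``zero set'' and $\Q^{(k)}$ of the inner cylinder) gives
\[
\|(w-\vartheta_0 M_k)_+\|_{L^{2}(\Q^{(k)})}\,\leq\,C\bigl(\|D_{m_0}w\|_{L^{2}(\tilde\Q^{(k)})} + \|Yw\|_{L^{2}H^{-1}(\tilde\Q^{(k)})}\bigr),
\]
and the right-hand side is controlled via Step 1 in terms of $M_k$ and source terms. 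Chained with the base measure information $|\{w=0\}\cap\Q_{pos}|\geq\tfrac12|\Q_{pos}|$ and with Theorem \ref{boundedness} applied on each $\Q^{(k)}$, a finite number of iterations (independent of $\delta$) yields $\sup_{\Q_1}w\leq C_{1}$ with $C_{1}=C_{1}(\theta,Q,\lambda,\Lambda)$. Unwinding the definition of $w$ gives $u\geq(1+\delta)e^{-C_{1}}-\delta$ in $\Q_1$, and choosing $\delta$ small enough yields $u\geq\eta_{0}:=\tfrac12 e^{-C_{1}}$, which is the desired conclusion.

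\emph{Main obstacle.} The core technical difficulty is Step 1, namely justifying rigorously the Kruzkov calculation inside the weak framework $\W$, where $Yu$ only lives in $L^{2}H^{-1}$, and verifying that $w$ is a genuine weak sub-solution in the sense of Definition \ref{weak-sol2} with $L^{q}$ source. I plan to approximate $\log\tfrac{1+\delta}{\cdot+\delta}$ by smooth concave truncations $\phi_{k}$, use $\phi_{k}'(u)\phi_{k}(u)\psi^{2}$ as a test function along the lines of Lemma \ref{pcaccB1}, exploit $\div b\geq 0$ to discard the troublesome $\langle b,D\phi_{k}(u)\rangle$ term after integration by parts, and pass to the limit $k\to\infty$ by monotone convergence. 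A secondary care is required in tracking the $\delta$-dependence of the Caccioppoli and iteration constants, so that the quadratic gradient term generated by the log-transform actually absorbs the $f/(u+\delta)$ contribution in $\tilde g$ and the final constants remain bounded as $\delta\to 0$.
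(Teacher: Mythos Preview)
Your plan is correct and follows essentially the same route as the paper's proof, which in turn adapts \cite[Lemma~4.1]{GI}: a Kruzkov log-transform turns the non-negative super-solution into a bounded sub-solution with an extra quadratic gradient term, and this is fed into the weak Poincar\'e inequality (Theorem~\ref{weak-poincare}) together with the Moser $L^2$--$L^\infty$ bound (Theorem~\ref{boundedness}) to force a universal upper bound on the transformed function. The only cosmetic difference is that the paper uses the abstract function $G$ from \cite[Lemma~2.1]{GI} satisfying $G''\ge (G')^2$ and supported in $(0,1]$, whereas you use the explicit $[\log\tfrac{1+\delta}{u+\delta}]_+$; both produce the same quadratic-gradient gain (your function satisfies $G''=(G')^2$ before truncation). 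One small caution: in Step~1 the crucial Caccioppoli bound comes from testing with $\psi^2$ (not $w\psi^2$), so that the quadratic gradient on the right absorbs the full $\int\psi^2|D_{m_0}w|^2$ and leaves only a term \emph{linear} in $w$ on the right-hand side; this is what makes the recursion $M_{k+1}\le\vartheta_0 M_k+C\sqrt{M_k}+C$ contractive and independent of the initial bound $M_0=\log((1+\delta)/\delta)$, rather than the weaker statement ``controlled in terms of $\|w\|_{L^2}$'' that you wrote.
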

\begin{proof}
For the sake of completeness we hereby state a sketch of the proof, that is an extension of \cite[Lemma 4.1]{GI}.
We consider $g=G(u+\epsilon)$, where $G$ is the convex function defined in \cite[Lemma 2.1]{GI}. In particular, 
$G$ is such that 
\begin{itemize}
	\item $G'' \ge (G^{'})^{2}$ and $G^{'} \le 0$ in $]0, + \infty[$,
	\item $G$ is supported in $]0,1]$, 
	\item $G(t) \sim - \ln t$ as $t \to 0^{+}$,
	\item $- G^{'}(t) \le \frac1t$ for $t \in ]0, \frac14]$. 
\end{itemize}
Thus, we have that $|G'(u+\epsilon)|\leq |G'(\epsilon)|\leq \epsilon^{-1}$ as $u$ is non-negative. Moreover, adapting \cite[Lemma 2.2]{GI} to our case, we find that $g$ is a non-negative sub-solution to \eqref{defL} with $f$ replaced by $fG'(u+\epsilon)$. This implies in particular that the drift term $Yg$ is bounded, i.e. $Y g \leq \div (A D g)+\langle b, D g \rangle +cg+\varepsilon^{-1}\Vert f \Vert_{{L^q}(\tilde{\Q}_{ext})}$. The rest of the proof follows very closely the one of \cite[Lemma 4.1]{GI}, with the only difference that we consider our Theorem \ref{boundedness} instead of the classical $L^2-L^\infty$ estimate and the weak Poincar\`{e} inequality \ref{weak-poincare} instead of \cite[Theorem 1.4]{GI}.
\end{proof}
As a straightforward consequence of Lemma \ref{expand-pos} we have the following result, whose proof is obtained reasoning exactly as in \cite[Lemma 4.2]{GI}.
\begin{lemma}\label{expositivy2}
Let $m\geq 3$ and let $R$ be the constant given in Lemma \ref{expand-pos} for $\theta\leq m^{-1/2}$. Then there exists a constant $M=M(m,Q,\lambda,\Lambda) >1$ such that for any non-negative super-solution 
$u$ to \eqref{defL} with $f$ equal to 0 satisfying $\left\vert \lbrace u \geq M \rbrace \cap \Q_1 \right\vert \geq \frac{1}{2}\vert \Q_1\vert$, we have $u \geq 1$ in $\overline{\Q}_1^m$ (see \eqref{qm}).
\end{lemma}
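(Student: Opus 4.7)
The plan is to iterate the expansion of positivity from Lemma \ref{expand-pos} along a chain of cylinders whose union covers the stacked cylinder $\overline{\Q}_1^m$, paying a factor $\eta_0$ at each step and compensating by choosing $M$ large enough.

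First I would normalize by setting $v := u/M$. Since $f=0$ and $\L$ is linear, $v$ is again a non-negative super-solution of $\L v = 0$, and the hypothesis rewrites as $|\{v \geq 1\}\cap \Q_1| \geq \frac{1}{2} |\Q_1|$; the goal becomes $v \geq M^{-1}$ on $\overline{\Q}_1^m$.

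Next I would produce, via the Lie group structure introduced in Section \ref{preliminaries}, a finite chain of cylinders $\Q_{pos}^{(1)}, \Q_1^{(1)}, \Q_{pos}^{(2)}, \Q_1^{(2)}, \dots, \Q_{pos}^{(k)}, \Q_1^{(k)}$ where each $\Q_{pos}^{(j)}$ is a left-translate of $\Q_{pos}$, each $\Q_1^{(j)}$ is a left-translate of $\Q_1$, and such that $\Q_{pos}^{(j+1)} \subset \Q_1^{(j)}$ for every $j = 1,\dots, k-1$, and moreover $\overline{\Q}_1^m \subset \bigcup_{j=1}^{k} \Q_1^{(j)}$. Here $k = k(m, Q, \lambda, \Lambda)$ is an integer controlled only by $m$ and the structural constants. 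The existence of such a chain is essentially what the stacked cylinder Lemma \ref{stackedcylinders} in Appendix \ref{stacked} is designed to provide; the condition $\theta \leq m^{-1/2}$, i.e.\ $\theta^2 \leq 1/m$, enters precisely at this step, since it controls how many past cylinders of time length $\theta^2$ can be fitted together to reach from one $\Q_1$-level to the next within $\overline{\Q}_1^m$.

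Given the chain, I would prove by induction on $j \in \{1, \dots, k\}$ that $v \geq \eta_0^j$ on $\Q_1^{(j)}$. In the inductive step, if $v \geq \eta_0^{j-1}$ on $\Q_1^{(j-1)}$, then $w := \eta_0^{-(j-1)} v$ is still a non-negative super-solution of $\L w = 0$ and $w \geq 1$ on $\Q_1^{(j-1)} \supset \Q_{pos}^{(j)}$, so in particular $|\{w \geq 1\}\cap \Q_{pos}^{(j)}| = |\Q_{pos}^{(j)}|$, and Lemma \ref{expand-pos} (applied after translation by the group law, which preserves the ellipticity constants and hence $\eta_0$) delivers $w \geq \eta_0$ on $\Q_1^{(j)}$, i.e.\ $v \geq \eta_0^j$. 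For the base case $j = 1$ I would use a Lebesgue differentiation argument on the set $\{v \geq 1\} \cap \Q_1$, whose density is at least $1/2$ by hypothesis: selecting a point of Lebesgue density $1$ and taking $\Q_{pos}^{(1)}$ to be a sufficiently small left-translate of $\Q_{pos}$ located in its past, the density of $\{v \geq 1\}$ on $\Q_{pos}^{(1)}$ is at least $1/2$, and one application of Lemma \ref{expand-pos} yields $v \geq \eta_0$ on $\Q_1^{(1)}$. Choosing finally $M := \eta_0^{-k}$ gives $u = Mv \geq \eta_0^{k} \cdot \eta_0^{-k} = 1$ on $\bigcup_{j=1}^{k} \Q_1^{(j)} \supset \overline{\Q}_1^m$.

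The principal obstacle I foresee is the geometric construction of the chain, namely checking that the stacked cylinder Lemma \ref{stackedcylinders} indeed produces a covering of $\overline{\Q}_1^m$ by unit-type cylinders whose $\Q_{pos}$-predecessors nest correctly one inside the other, and that the number $k$ of iterations is bounded only in terms of $m$ and the structural constants $Q, \lambda, \Lambda$; everything else is either a direct rescaling or a straightforward application of Lemma \ref{expand-pos}.
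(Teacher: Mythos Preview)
Your chain--iteration scheme is more elaborate than what is needed, and the base case as you describe it has a genuine gap. The Lebesgue differentiation argument produces a small cylinder $\Q_{pos}^{(1)}$ whose size depends on the particular function $u$ (you need it small enough that the density of $\{v\ge 1\}$ there is at least $1/2$); consequently the number $k$ of iterations required to cover $\overline{\Q}_1^m$ from this uncontrolled starting point depends on $u$, and so does $M=\eta_0^{-k}$. This contradicts the statement, which asserts $M=M(m,Q,\lambda,\Lambda)$ only. Moreover, the stacked cylinder Lemma~\ref{stackedcylinders} is designed for the proof of Lemma~\ref{lem-largetimes}, not for this configuration, so invoking it here does not supply the chain you need.

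The intended argument (following \cite[Lemma~4.2]{GI}) is a \emph{single} application of Lemma~\ref{expand-pos} after one rescaling. The hypothesis $|\{u\ge M\}\cap\Q_1|\ge\tfrac12|\Q_1|$ is precisely the density condition in Lemma~\ref{expand-pos} with $\Q_1$ playing the r\^ole of $\Q_{pos}$: dilate by $\delta_{1/\theta}$ with $\theta=m^{-1/2}$ and translate in time so that the unit cylinder $\Q_1$ becomes the small past cylinder $\Q_{pos}$ of relative radius $\theta$. Lemma~\ref{expand-pos} then yields $u/M\ge\eta_0$ on the corresponding future cylinder, which at the original scale is a cylinder of radius $1/\theta=m^{1/2}$ and time--length $1/\theta^2=m$ sitting above $\Q_1$; the choice $\theta\le m^{-1/2}$ guarantees this cylinder contains $\overline{\Q}_1^m$. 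Setting $M:=\eta_0^{-1}$ finishes the proof in one step, with $M$ depending only on $\eta_0=\eta_0(\theta,Q,\lambda,\Lambda)$ and hence on $(m,Q,\lambda,\Lambda)$.
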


Before proving the weak Harnack inequality, we need to show that we can spread positivity along "suitable" cylinders. More precisely, recalling the definition of the open ball in \eqref{openball}, we set 
\begin{equation}\label{Qmeno}
	\Q_+=B_{\omega}\times B_{\omega^3}\times\ldots \times B_{\omega^{2\kappa+1}}\times (-\omega^2,0], \quad \Q_-=B_{\omega}\times B_{\omega^3}\times\ldots \times B_{\omega^{2\kappa+1}}\times (-1,-1+\omega^2], 
\end{equation}
where $\omega$ is a small positive constant. In particular, we will choose $\omega$ small enough so that, when expanding positivity from a given cylinder $\Q_r(z_0)$ in the past, the union of the stacked cylinders where the positivity is spread includes $\Q_+$. Moreover, we will choose the radius $R_0$ in the statement of Theorem \ref{weak-harnack} so that Lemma \ref{expand-pos} can be applied to every stacked cylinder. The two previous statements are specified in Lemma \ref{stackedcylinders}. 
The staking cylinders Lemma \ref{stackedcylinders}, combined with Lemma \ref{expand-pos}, implies the following result regarding the expansion of positivity for large times. 

In the sequel, we will largely use the cylinders $\Q_r[k]$, for $k=1,\ldots,N$ and $\Q_{R_{N+1}}[N+1]$, whose definition and properties are presented in Appendix \ref{stacked}. 
\begin{lemma}
\label{lem-largetimes}
Let $R_{1/2}$ be the constant given by Lemma \ref{expand-pos} for $\theta=1/2$ and let $u$ be any non-negative super-solution to \eqref{defL} with $f=0$ in $\O \supset \Q$ such that $\left\vert \lbrace u \geq M \rbrace \cap \Q_r(z_0)\right\vert \geq \frac{1}{2}\left\vert \Q_r(z_0) \right\vert$ for some $M>0$ and for some cylinder $\Q_r(z_0) \subset \Q_-$. Then there exists a positive constant $p_0$, only depending on $Q$, $\lambda$, $\Lambda$, such that
\begin{eqnarray}
u \geq M\left( \frac{r^2}{4} \right)^{p_0}, \quad \textit{in $\Q_+$}.
\end{eqnarray}
\end{lemma}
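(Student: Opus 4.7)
The approach is to iterate the expansion of positivity (Lemma \ref{expand-pos}) along a geometric sequence of cylinders whose radius doubles at each step, in the spirit of \cite[Lemma 4.3]{GI}. Each iteration loses a multiplicative factor $\eta_0$, so that after $k \sim \log_{2}(1/r)$ steps one captures the whole cylinder $\Q_{+}$ while paying a factor $\eta_{0}^{k}$, which is precisely of the form $(r^{2}/4)^{p_{0}}$. Because $f = 0$, no forcing-term issue arises.

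First, I would reduce the statement. Setting $v := u/M$, $v$ is a non-negative super-solution of $\L v = 0$ with $|\{v \geq 1\}\cap \Q_r(z_0)| \geq \tfrac{1}{2}|\Q_r(z_0)|$, and the claim reduces to $v \geq (r^{2}/4)^{p_{0}}$ on $\Q_+$. Next I would carry out one iteration: define the change of variables $\Phi(\xi, \tau) := z_0 \circ \delta_{2r}\bigl((0,1) \circ (\xi, \tau)\bigr)$. A direct computation using \eqref{grouplaw} and \eqref{gdil}, together with the identity $\delta_{2r}\circ L_{(0,1)} = L_{(0,4r^{2})}\circ \delta_{2r}$, yields $\Phi(\Q_{pos}^{1/2}) = \Q_r(z_0)$ and $\Phi(\Q_1) = \Q_{2r}(z_1)$ with $z_1 := z_0 \circ (0, 4r^2)$. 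Setting $\tilde v := v \circ \Phi$, the left-translation and dilation invariance of $\L$ recalled in Section \ref{preliminaries} imply that $\tilde v$ is a non-negative super-solution of an operator of the same class on $\Phi^{-1}(\O)$; moreover $|\{\tilde v \geq 1\} \cap \Q_{pos}^{1/2}| \geq \tfrac{1}{2}|\Q_{pos}^{1/2}|$. Applying Lemma \ref{expand-pos} with $\theta = 1/2$ to $\tilde v$ gives $\tilde v \geq \eta_0$ on $\Q_1$, which translates back to $v \geq \eta_0$ on $\Q_{2r}(z_1)$.

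I would then iterate. Since $v \geq \eta_0$ pointwise on the whole of $\Q_{2r}(z_1)$, the hypothesis is trivially satisfied for $v/\eta_0$ on $\Q_{2r}(z_1)$, so replaying the previous step yields $v \geq \eta_0^{2}$ on $\Q_{4r}(z_2)$ with $z_2 = z_1 \circ (0, 4(2r)^{2})$. By induction, after $k$ steps
\begin{equation*}
v \geq \eta_0^{k} \quad \text{on } \Q_{2^{k} r}(z_k), \qquad z_k = z_0 \circ \bigl(0, \tfrac{4}{3}(4^{k}-1)\, r^{2}\bigr).
\end{equation*}
At each step one must check that the enlarged cylinder $\Phi_k(\tilde\Q_{ext}^{1/2})$ required to invoke Lemma \ref{expand-pos} still lies inside $\O$; this is exactly what is built into the choice of $R_0$ in Theorem \ref{weak-harnack} together with the properties of the stacked cylinders $\Q_r[k]$ of Appendix \ref{stacked}.

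Finally, by Lemma \ref{stackedcylinders} the constant $\omega$ defining $\Q_{\pm}$ is chosen exactly so that there exists $k^{\ast} \leq C_0 + \log_{2}(1/r)$ with $\Q_{2^{k^{\ast}} r}(z_{k^{\ast}}) \supseteq \Q_{+}$. Therefore on $\Q_{+}$ one has
\begin{equation*}
v \geq \eta_0^{k^{\ast}} \geq \eta_0^{C_0}\, r^{\log_{2}(1/\eta_0)} \geq \bigl( r^{2}/4 \bigr)^{p_{0}}
\end{equation*}
for a suitably chosen $p_{0} = p_{0}(Q, \lambda, \Lambda) > 0$ (any $p_{0} \geq \log_{2}(1/\eta_{0})$ will do, with the leading constants absorbed into $p_{0}$ for $r$ away from $1$); multiplying through by $M$ yields the claim. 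The main obstacle is the bookkeeping required to track how the cylinders $\Q_{2^{k} r}(z_k)$ behave under the non-commutative group law \eqref{grouplaw} and the non-isotropic dilations \eqref{gdil}: the spatial base point of $z_k$ drifts via the matrix $E(\cdot)$ as we iterate in time, so confirming that the doubled cylinders sweep out $\Q_{+}$ after $O(\log(1/r))$ steps, rather than merely a neighborhood of a drifting trajectory, is precisely the role of the stacked-cylinders lemma of Appendix \ref{stacked}.
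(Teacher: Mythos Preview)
Your proposal is correct and follows essentially the same approach as the paper: iterate Lemma \ref{expand-pos} with $\theta=1/2$ along the doubling stacked cylinders of Appendix \ref{stacked}, losing a factor $\eta_0$ at each of the $\sim\log_2(1/r)$ steps, and then convert $\eta_0^{k^\ast}$ into $(r^2/4)^{p_0}$ using $4^{N}r^2\lesssim 1$. The only cosmetic difference is that the paper explicitly singles out the final $(N{+}1)$-th step, where the radius is $R_{N+1}=\max(R,\rho)$ rather than $2^{N+1}r$, to guarantee $\Q_+\subset \Q_{R_{N+1}}[N{+}1]$; you absorb this into your reference to Lemma \ref{stackedcylinders}.
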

\begin{proof}
We apply Lemma \ref{expand-pos} for $\theta=\frac{1}{2}$ to the function $u/M$, with $\Q_r (z_0)$ and $\Q_r[1]$ taking the role of $\Q_{pos}$ and $\Q_1$ (this is achieved through a rescaling argument) and obtain $u/M\geq \eta_0$ in $\Q_r[1]$. We then apply it to $u/(M\eta_0)$ and get $u\geq M\eta_0^2$ in $\Q_r[2]$. Reasoning by induction on $k=1,\ldots,N$ we infer $u\geq M\eta_0^k$ in $\Q[k]$.

By exploiting Lemma \ref{expand-pos} again, we get $u\geq M\eta_0^{N+1}$ in $\Q_{R_{N+1}}[N+1]$, which implies that the same inequality holds true in $\Q_+$. As $T_N \leq -t_0 < 1$, we have in particular $4^N r^2 \leq 1$. Picking $p_0 >0$ so that $\eta_0=\left(\left(\frac{1}{4}\right)^\frac{N}{N+1}\right)^{p_0}$, we finally obtain 
\begin{equation*}
u \geq M \left( \left(\left(1/4\right)^\frac{N}{N+1}\right)^{N+1}\right)^{p_0}=M \left( \left( 1/4\right)^N\right)^{p_0} \geq M \left(r^2/4\right)^{p_0},
\end{equation*}
which concludes the proof.
\end{proof}

From now on we will assume $\omega<1/\sqrt{2\kappa+1}$, where $\k$ is defined in \eqref{split.coord.RN}.
We are in a position to prove the main result of this Section, Theorem  \ref{weak-harnack}. 

\medskip 

\textit{Proof of Theorem \ref{weak-harnack}.}
We start the proof by fixing the parameters $\omega$ and $R_0$ in order to select the appropriate geometric setting. More precisely, we choose $\omega$ so that we capture $\Q_+ $ when applying Lemma \ref{stackedcylinders}, namely we fix $\omega<\frac{1}{\sqrt{2\kappa+1}}$. In addition, we choose the radius $R_0$ so that the stacked cylinders do not leak out of $\Q^0$, i.e. $R_0 \geq 6\left(2\kappa+1\right)R_{1/2}$, where $R_{1/2}$ is the constant given by Lemma \ref{expand-pos} when $\theta=1/2$. As we want to apply Lemma \ref{expositivy2} to cylinders contained in $\Q_-$, we also assume $R_0\geq 3(2\kappa+1)R_{m^{-1/2}}m^{(2\kappa+1)/2}\omega^{2\kappa+1}$, where $R_{m^{-1/2}}$ is the constant given by Lemma \ref{expand-pos} for $\theta=m^{-1/2}$. 

Our aim is to reduce ourselves to the case where
\begin{eqnarray} \label{hp1}
\inf_{\Q_+}u\leq 1, \quad \textit{and} \quad f=0.
\end{eqnarray}
On one hand, if $\inf_{\Q_+}u >1$ we can simply consider $\bar{u}=u/\left(\inf_{\Q_+}u+1\right)$ and reduce to the case where $\inf_{\Q_+}u\leq 1$. 
On the other hand, if $f\ne0$ and $c=0$ we have that $\tilde{u}:=u+\vartheta t \Vert f \Vert_{L^q(\Q^0)}$ is a super-solution to equation \eqref{defL} with source term equal to 0, provided that we choose $\vartheta$ such that 
\begin{equation}
	\label{theta-p}
	\vartheta=R_0^{-\frac{Q+2}{q}}.
\end{equation} 
Indeed, exploiting the fact that $u$ is non-negative super-solution to \eqref{defL} in $\Q^0$, we infer
\begin{align}\label{kolmo-sub2} \nonumber
	   &\int_{\Q^0} - \langle A D\tilde{u}, D\phi \rangle + \phi Y \tilde{u} + \langle b , D\tilde{u} \rangle \phi \\
	   &\quad= \int_{\Q^0} - \langle A Du, D\phi \rangle + \phi Y u -\phi\vartheta\Vert f \Vert_{L^q(\Q^0)}+ \langle b , Du \rangle \phi \\ \nonumber
	   &\quad \leq \int_{\Q^0} f \phi-\int_{\Q^0} \phi\vartheta\Vert f \Vert_{L^q(\Q^0)}.
	\end{align}
We now observe that the last line in \eqref{kolmo-sub2} can be estimated as follows
\begin{align*}
\int_{\Q^0} f \phi-\int_{\Q^0} \phi\vartheta\Vert f \Vert_{L^q(\Q^0)} &\leq \left(R_0^{Q+2}\right)^{1-1/q}\Vert f \Vert_{L^q(\Q^0)}\Vert \phi \Vert_{L^\infty(\Q^0)}-\vartheta\Vert f \Vert_{L^q(\Q^0)}R_0^{Q+2}\Vert \phi \Vert_{L^\infty(\Q^0)}\\
&=\Vert f \Vert_{L^q(\Q^0)}R_0^{Q+2}\Vert \phi \Vert_{L^\infty(\Q^0)}\left( \left(R_0^{Q+2}\right)^{1-1/q}-\vartheta R_0^{Q+2}\right),
\end{align*}
which is equal to 0 when $\vartheta=R_0^{-\frac{Q+2}{q}}$. Thus, $\tilde{u}$ is a super-solution of equation \eqref{defL} with $f=0$ and the weak Harnack inequality for $\tilde{u}$ implies the one for $u$. 
Lastly, if $c \ne 0$ the reasoning follows by replacing the value of $\vartheta$ in 
\eqref{theta-p} by  
\begin{equation*}
	\vartheta = \frac{1}{(R_{0}^{Q+2} )^{1/q} - \| c \|_{L^{q}(\Q^{0})}}.
\end{equation*}

We now want to prove that for all $k \in \mathbb{N}$, the following inequality holds
\begin{eqnarray}\label{induction}
\left\vert \lbrace u > M^k \rbrace \cap \Q_1 \right\vert \leq \tilde{C}(1-\tilde{\mu})^k,
\end{eqnarray}
for some constants $\tilde{\mu} \in (0,1)$ $M>1$ and $\tilde{C}>0$ that only depend on $Q$, $\lambda$ and $\Lambda$. The proof of this fact is carried out by induction. For $k=1$ it is sufficient to choose $\tilde{\mu} \leq \frac{1}{2}$ and $\tilde{C}$ such that $|\Q_-|\leq \frac{1}{2}\tilde{C}$. We now assume that \eqref{induction} holds true for $k\geq 1$ and we prove it for $k+1$. To this end, we consider the sets
\begin{eqnarray}
E:=\lbrace u >M^{k+1}\rbrace \cap \Q_- , \qquad F:=\lbrace u >M^{k}\rbrace \cap \Q_1.
\end{eqnarray}
We observe that $E$ and $F$ satisfy the assumptions of Corollary \ref{ink-spot-cor} with $\Q_1$ replaced by $\Q_-$ and $\mu=1/2$. Indeed, by definition $E$ and $F$ are bounded measurable sets such that $E \subset \Q_- \cap F$. We now consider a cylinder $\Q=\Q_r(z)\subset \Q_-$ such that $|\Q \cap E| >\frac{1}{2}|\Q|$, i.e.
\begin{eqnarray*}
|\lbrace u > M^{k+1}\rbrace \cap \Q|>\frac{1}{2}|\Q|.
\end{eqnarray*}
We show that $r$ needs to be small, that is to say $r$ is less than some parameter $r_0=r_0(Q,\lambda,\Lambda,k)$. Indeed, applying Lemma \ref{lem-largetimes} to $u$, we obtain $u\geq M^{k+1}(r^2/4)^{p_0}$ in $\Q_+$. Thus, owing to $\inf_{\Q_+}u \leq 1$, we infer $1 \geq M^{k+1}(r^2/4)^{p_0}$ and therefore it is sufficient to choose $r_0\leq2 M^{-k-1/2p_0}$. In order to apply Corollary \ref{ink-spot-cor}, we are left with proving that $\overline{\Q}^m \subset F$, which holds true if $\overline{\Q}^m \subset \lbrace u > M^k \rbrace$. To this end, we apply Lemma \ref{expositivy2} to $u/M^k$ after rescaling the cylinder $\Q$ in $\Q_1$.

In virtue of Corollary \ref{ink-spot-cor}, there exist $c_{\rm is} \in (0,1)$ and $C_{\rm is}>0$ such that
\begin{align*}
|E|=\left\vert \lbrace u >M^{k+1}\rbrace \cap \Q_- \right\vert &\leq \frac{m+1}{m}\left( 1- \frac{c_{\rm is}}{2}\right)\left( \left\vert \lbrace u >M^{k}\rbrace \cap \Q_1\right\vert +C_{\rm is} mr_0^2 \right)\\
&\leq \left(1-\frac{c_{\rm is}}{4}\right)\left( \left\vert \lbrace u >M^{k}\rbrace \cap \Q_1\right\vert +C_{\rm is} mr_0^2 \right),
\end{align*}
provided that we chose $m \in \mathbb{N}$ so that $\frac{m+1}{m}\left( 1- \frac{c_{\rm is}}{2}\right)\leq 1-\frac{c_{\rm is}}{4}$. Thanks to the induction assumption and our choice of $r_0$ we get
\begin{align*}
|E|&\leq \left(1-\frac{c_{\rm is}}{4}\right)\left( \tilde{C}(1-\tilde{\mu})^k +C_{\rm is}mr_0^2 \right)\\
&\leq \left(1-\frac{c_{\rm is}}{4}\right)\left( \tilde{C}(1-\tilde{\mu})^k +C_{\rm is} mM^{-\frac{k+1}{p_0}} \right).
\end{align*}
Picking then $\tilde{\mu}$ small enough so that $M^{-1/p_0}\leq (1-\tilde{\mu})$ and $\tilde{\mu}\leq \frac{c_{\rm is}}{4}$, we obtain
\begin{align*}
|E|&\leq \tilde{C}\left(1-\frac{c_{\rm is}}{4}\right)(1-\tilde{\mu})^k\left( 1 +\tilde{C}^{-1}mM^{-\frac{1}{p_0}} \right)\\
&\leq \tilde{C}(1-\tilde{\mu})^{k+1}\left( 1 +\tilde{C}^{-1}mM^{-\frac{1}{p_0}} \right).
\end{align*}
Picking $\tilde{C}$ large enough so that $\left( 1 +\tilde{C}^{-1}mM^{-\frac{1}{p_0}} \right)\leq 2$ we conclude the proof of \eqref{induction}. By extending estimate \eqref{induction} to the continuous case (i.e. $k \in \mathbb{R}$ and $k\geq 1$) and applying the layer cake formula to $\int_{\Q_- }f^p$ for some exponent $p$, we obtain that $\int_{\Q_-} f^p$ is bounded from above by a constant that only depends on $Q$, $\lambda$ and $\Lambda$. 
$\hfill \square$

\subsection{Harnack inequality and local H\"older continuity} \hfill \\

\smallskip

\textit{Proof of Theorem \ref{harnack-thm}.}
The full Harnack inequality is a direct 
consequence of the combination of the local boundedness of weak sub-solutions proved in Theorem 
\ref{boundedness} and the weak Harnack inequality of Theorem \ref{weak-harnack}. $\hfill \square$

\medskip

\textit{Proof of Theorem \ref{local-holder}.} The H\"older continuity of weak solutions is classically obtained by proving that the oscillation of the solution decays 
by a universal factor. This can be achieved in two different ways. Either by applying Lemma \ref{expand-pos} with $\theta=1$ in the same spirit of \cite[Appendix B]{GI}, or by directly applying the weak Harnack inequality, Theorem \ref{weak-harnack}, following a standard argument, for further reference see \cite{GT}.
$\hfill \square$

\begin{comment}
\begin{lemma}
\label{decrease-osc}
Let $u$ be a weak solution of $\L u = f$ in the sense of Definition \ref{weak-sol2}. 
Then for every cylinder $\overline \Q = \Q_{\overline R}(x_{0}, t_{0})
\subset \Omega$ such that $\overline \Q \supset B_{3R_{1}} \times \ldots \times B_{3^{2\k +1}R_{1}} \times ]-R_{1}, 0]$
there exists a constant $C = C(N, \l, \Lambda, q, \overline R)$ such that
\begin{equation*}
	{\rm osc}_{\Q} u \le C R^{\a} \left( \overline R^{-\a} \sup \limits_{\overline \Q} |u| + \right)
\end{equation*}
\end{lemma}
\end{comment}

	\appendix
	\setcounter{theorem}{0}
	\counterwithin{theorem}{section}
	\counterwithin{equation}{section}
\section{The Ink-Spots Theorem}
\label{ink-spot}
For the sake of completeness, we provide here the proof of the Ink Spots Theorem for the case of ultraparabolic equations. This theorem involves a covering argument in the spirit of Krylov and Safonov \cite{KrylovSafonov} growing ink spots theorem, or the Calder\'on-Zygmund 
decomposition, and it is a fundamental ingredient for the proof of the weak Harnack inequality (see Theorem 
\ref{weak-harnack}). In order to give its statement in our setting, we introduce the delayed cylinder
\begin{align}
	\label{qm}
	\overline \Q_{r}^{m}(z_{0}) &= \left( (0, \ldots, 0, mr^{2}) \circ \Q_{r}(z_{0}) \right) 
	\cap \left(  \R^{N+1} \times (t_{0}, + \infty) \right) 
\end{align}
where $z_{0}= (x_{0},t_{0}) = (x^{(0)}_{0}, \ldots, x^{(\k)}_{0}, t_{0}) \in \R^{N+1}$. We remark that 
$\overline \Q_{r}^{m}(z_{0})$ starts immediately at the end of $\Q_{r}(z_{0})$, with which shares the same
values for $x^{(0)}$, and its structure follows the non Euclidean geometry presented in Section \ref{preliminaries} associated to the principal part operator $\K$. The aim of this section is to prove the following statement. 
\begin{theorem}
	\label{ink-spot1}
	Let $E \subset F$ be two bounded measurable sets. We assume there exists a constant 
	$\mu \in ]0,1[$ such that
	\begin{itemize}
		\item $E \subset \Q_{1}$ and $|E| < (1 - \mu) |\Q_{1}|$;
		\item moreover, there exist an integer $m$ such that for any cylinder
			$\Q \subset \Q_{1}$ such that $\overline \Q^{m} \subset \Q_{1}$ and 
			$|\Q \cap E| \ge (1-\mu)|\Q|$, we have that $\overline \Q^{m} \subset F$.
	\end{itemize}
	Then for some universal constant $c_{\rm is} \in (0,1)$
	only depending on $N$, there holds
	\begin{equation*}
	|E| \le \frac{m+1}{m}(1 - c_{\rm is}\mu) |F| .
	\end{equation*}
\end{theorem}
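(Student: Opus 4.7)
The plan is to combine a Calder\'on--Zygmund type stopping-time decomposition on a dyadic grid adapted to the intrinsic geometry of $\mathbb{K}=(\R^{N+1},\circ)$ with a careful overlap count for the translated stacks $\overline{\Q}^{\,m}$. First I would construct a dyadic partition of $\Q_{1}$ compatible with the anisotropic dilations $(\d_{r})_{r>0}$ and the group law $\circ$. At generation $n$ the cells are pairwise disjoint left translates of $\d_{2^{-n}}(\Q_{1})$ of measure $2^{-n(Q+2)}|\Q_{1}|$, and each cell has a well-defined dyadic parent at generation $n-1$. An analogous grid is built in \cite{CPP} in the ultraparabolic setting.

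Next I would apply the stopping time to the indicator $\mathbbm{1}_{E}$ at threshold $1-\mu$. Since $|E|<(1-\mu)|\Q_{1}|$, the root $\Q_{1}$ is not selected, so one descends the tree and collects every dyadic cell $\Q_{i}$ that is maximal (i.e.\ whose parent $\widetilde{\Q}_{i}$ does not satisfy the density bound) among those cells with
\begin{equation*}
	|\Q_{i} \cap E| \,\ge\, (1-\mu)\,|\Q_{i}|.
\end{equation*}
The family $\{\Q_{i}\}$ is pairwise disjoint, $E \subset \bigcup_{i}\Q_{i}$ up to a null set by the Lebesgue differentiation theorem on $\mathbb{K}$, and each parent $\widetilde{\Q}_{i}$ verifies $|\widetilde{\Q}_{i}\setminus E|>\mu|\widetilde{\Q}_{i}|$, which is the source of the $\mu$-gain. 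For every $\Q_{i}$ whose stack $\overline{\Q}_{i}^{\,m}$ is still contained in $\Q_{1}$, the second hypothesis produces $\overline{\Q}_{i}^{\,m}\subset F$; the cells lying too close to the top of $\Q_{1}$ for this inclusion to hold concentrate in a thin temporal strip whose contribution is tracked separately, in the spirit of the accompanying Ink-Spots corollary.

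The conclusion would then follow from the measure comparison
\begin{equation*}
	|F| \;\ge\; \Big|\, E \,\cup\, \textstyle\bigcup_{i} \overline{\Q}_{i}^{\,m}\,\Big|,
\end{equation*}
combined with (a) the disjointness of $\Q_{i}$ and $\overline{\Q}_{i}^{\,m}$ in the time variable, (b) the elementary bound $\sum_{i}|\Q_{i}|\ge |E|$, and (c) a bounded overlap lemma asserting that any point of $\Q_{1}$ lies in at most $C_{0}(m+1)$ of the stacks $\overline{\Q}_{i}^{\,m}$. Combining these three ingredients with the $\mu$-deficit on the parents $\widetilde{\Q}_{i}$ produces the announced inequality, with $c_{\mathrm{is}}$ computed explicitly in terms of the overlap constant $C_{0}$ and of the homogeneous dimension $Q$.

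The main obstacle is the overlap estimate (c), which is responsible for the precise factor $\tfrac{m+1}{m}$; the extra factor $(1-c_{\mathrm{is}}\mu)$ is then extracted essentially for free from the $\mu$-measure deficit of $E$ on each parent $\widetilde{\Q}_{i}$. In the Euclidean parabolic case this count is a simple pigeonhole on time intervals. In our setting one has to verify that the anisotropic dilations $\d_{r}$ and the non-commutative group law $\circ$, which mixes the spatial variables through $E(\tau)$, do not create new overlaps. The key point is that the stack operation $\overline{\Q}^{\,m}$ acts only in the time direction, so the spatial anisotropy enters only through the base cylinders $\Q_{i}$, and the temporal pigeonhole goes through unchanged with $Q$ replacing the Euclidean dimension.
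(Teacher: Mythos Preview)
Your approach has a genuine gap at the first step. You propose a dyadic grid on $\Q_{1}$ whose generation-$n$ cells are pairwise disjoint left translates of $\d_{2^{-n}}(\Q_{1})$, i.e.\ genuine slanted cylinders $\Q_{2^{-n}}(z)$. Such a grid does not exist: a slanted cylinder cannot be partitioned into smaller slanted cylinders of the same type. Halving the time interval of $\Q_{1}$ forces radius $r=2^{-1/2}$, so the spatial factors would have to shrink to $B_{r}\times B_{r^{3}}\times\cdots$, and these do not tile $B_{1}\times B_{1}\times\cdots$; on top of this the group law $\circ$ shears the higher spatial blocks through $E(\tau)$, destroying any product splitting. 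This obstruction is precisely why the paper, following Imbert--Silvestre, abandons the Calder\'on--Zygmund route altogether (see the remark immediately after the statement of Theorem~\ref{ink-spot1}); the reference \cite{CPP} does not construct such a grid either. Christ-type dyadic cubes for homogeneous spaces do exist, but they are not cylinders of the form $\Q_{r}(z)$, so the second hypothesis of the theorem cannot be applied to them without first inflating to a comparable cylinder, at which point disjointness is lost and one is forced back to a Vitali argument.

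The paper proceeds differently on both counts. The gain $(1-c_{\rm is}\mu)$ comes from a Vitali covering rather than a stopping time: for each Lebesgue point $z$ of $E$ one selects a \emph{maximal} cylinder $\Q^{z}\subset\Q_{1}$ with $|\Q^{z}\cap E|=(1-\mu)|\Q^{z}|$, extracts a disjoint subfamily via Lemma~\ref{vitali}, and reads off $|F\setminus E|\ge k^{-(Q+2)}\mu|E|$ directly on the selected cylinders (Lemma~\ref{ink-spot2}), with no parent cell involved. The factor $\tfrac{m+1}{m}$ is then \emph{not} obtained by counting overlaps of the stacks $\overline{\Q}^{m}$ as you suggest, but from a separate measure inequality (Lemma~\ref{leak2}): after freezing $x^{(0)}$ and using Fubini, it reduces to the one-dimensional bound $\big|\bigcup_{k}(a_{k},a_{k}+mh_{k}]\big|\ge\tfrac{m}{m+1}\big|\bigcup_{k}(a_{k}-h_{k},a_{k}]\big|$. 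Your bounded-overlap scheme, even granting the grid, would at best produce a constant of order $C_{0}(m+1)/m$ rather than the sharp $\tfrac{m+1}{m}$.
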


\begin{remark}
 Theorem \ref{ink-spot1} still holds true if we replace $\Q_1$ with $\Q_-$ defined in \eqref{Qmeno}.
\end{remark}
As it has already been pointed out by Imbert and Silvestre in \cite{IS}, there is no chance to adapt the 
Calder\'on-Zygmund decomposition to this context, because it would require to split a larger piece into smaller ones of the same type and this is impossible due to the non Euclidean nature of our geometry. What we do is a generalization of the procedure proposed in \cite{IS}, that is in fact an adaptation of the growing ink-spots theorem, whose original construction in the parabolic case dates back to Krylov and Safonov \cite[Appendix A]{KrylovSafonov}. 

Moreover, when we need to confine both $E$ and $F$ to stay within a fixed cylinder, the following corollary directly follows. 
\begin{corollary}
	\label{ink-spot-cor}
	Let $E \subset F$ be two bounded measurable sets. We assume 
	\begin{itemize}
		\item $E \subset \Q_{1}$;
		\item there exist two constants $\mu , r_{0} \in ]0,1[$ and an integer $m$ 
		such that for any cylinder $\Q \subset \Q_{1}$ of the form $Q_{r}(z_{0})$ 
			such that $|\Q \cap E| \ge (1-\mu)|\Q|$, we have $\overline \Q^{m} \subset F$
			and also $r < r_{0}$.
	\end{itemize}
	Then for some universal constants $c_{\rm is}$ and $C_{\rm is}$ only depending on $N$
	\begin{equation*}
		|E| \le \frac{m+1}{m}(1 - c_{\rm is}\mu) \left( |F \cap \Q_{1}| + C_{\rm is} m r^{2}_{0} \right).
	\end{equation*} 
\end{corollary}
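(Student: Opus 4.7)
The plan is to reduce Corollary~\ref{ink-spot-cor} to a direct application of Theorem~\ref{ink-spot1} by replacing the potentially unbounded set $F$ with a controlled truncation $F' = F \cap \hat{\Q}_1$, where $\hat{\Q}_1$ is a slight enlargement of $\Q_1$ whose excess measure is of order $m r_0^2$. First I would observe that the hypothesis of the corollary, applied to the cylinder $\Q = \Q_1(0)$ of radius $1$, automatically forces $|\Q_1 \cap E| < (1-\mu)|\Q_1|$: otherwise we would deduce that $1 = r < r_0 < 1$, a contradiction. Since $E \subset \Q_1$, this matches the first hypothesis of Theorem~\ref{ink-spot1}, namely $|E| < (1-\mu)|\Q_1|$.

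Next I would construct $\hat{\Q}_1$ explicitly. From the definition \eqref{qm} together with the group law \eqref{grouplaw} applied to an element of the form $(0,\ldots,0, mr^2)$ (whose spatial component is zero), the stacked cylinder $\overline{\Q}_r^m(z_0)$ is a pure time-translate of $\Q_r(z_0)$ that lives in the time strip $(t_0 + (m-1)r^2,\, t_0 + mr^2]$ with the same spatial extent as $\Q_r(z_0)$. For any $\Q_r(z_0) \subset \Q_1$ with $r < r_0$, since the top time of $\Q_r(z_0)$ is at most $0$, the set $\overline{\Q}^m$ is confined to the time strip $\{t \le mr_0^2\}$ and to a spatial region contained in the spatial projection of $\Q_1$ onto $\R^N$. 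Setting
\[
	\hat{\Q}_1 \; := \; (\text{spatial part of } \Q_1) \,\times\, (-1,\, mr_0^{\,2}\,],
\]
I obtain $|\hat{\Q}_1 \setminus \Q_1| \le C_{\rm is}\, m r_0^{2}$, where $C_{\rm is}$ is bounded by the $N$-dimensional measure of the spatial base of $\Q_1$ and hence depends only on $N$.

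Finally I would set $F' := F \cap \hat{\Q}_1$ and verify the hypotheses of Theorem~\ref{ink-spot1} applied to the pair $(E, F')$. Since $E \subset \Q_1 \subset \hat{\Q}_1$ and $E \subset F$, we have $E \subset F'$. Moreover, for every $\Q \subset \Q_1$ with $\overline{\Q}^m \subset \Q_1$ and $|\Q \cap E| \ge (1-\mu)|\Q|$, the corollary's hypothesis yields $\overline{\Q}^m \subset F$, and combining this with $\overline{\Q}^m \subset \Q_1 \subset \hat{\Q}_1$ gives $\overline{\Q}^m \subset F'$. Applying Theorem~\ref{ink-spot1} to $(E, F')$ produces
\[
	|E| \,\le\, \frac{m+1}{m}\bigl(1-c_{\rm is}\,\mu\bigr)|F'| \,\le\, \frac{m+1}{m}\bigl(1-c_{\rm is}\,\mu\bigr)\bigl(|F\cap \Q_1| + C_{\rm is}\, m r_0^{2}\bigr),
\]
which is the desired inequality. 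The only nontrivial step is the measure estimate for $|\hat{\Q}_1 \setminus \Q_1|$; fortunately the translation defining $\overline{\Q}^m$ has zero spatial part, so the non-Euclidean group law \eqref{grouplaw} collapses to a pure time shift and the estimate reduces to a one-dimensional computation in the time variable. I therefore expect the main conceptual obstacle to lie upstream, in the proof of Theorem~\ref{ink-spot1} itself (which the authors handle via a Calder\'on--Zygmund-type growing ink-spots argument adapted to the dilations $\delta_r$), rather than in the derivation of the corollary.
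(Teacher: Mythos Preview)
Your proposal is correct and follows essentially the same route as the paper: both arguments reduce the corollary to Theorem~\ref{ink-spot1} by observing that each relevant delayed cylinder $\overline\Q^{m}$ is a pure time-shift of $\Q$ (since left-translation by $(0,\dots,0,mr^{2})$ has zero spatial part), so the only leakage outside $\Q_{1}$ is in the time direction and has measure at most $C_{\rm is}\,m r_{0}^{2}$. Your packaging via the truncated set $F'=F\cap\hat\Q_{1}$ is a clean variant of the paper's estimate of $|G\setminus\Q_{1}|$ for $G=\bigcup\overline\Q^{m}$, and in fact makes the application of Theorem~\ref{ink-spot1} slightly more transparent.
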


\subsection{Stacked cylinders}
First of all we recall some important properties of the following family of stacked cylinders
\begin{align*}
	k\Q_{r} = \left( 0, \ldots, 0, \frac{k^{2} - 1}{2} r^{2}  \right) \circ \Q_{kr}
		   %= B_{k r } \times B_{k^{3} r^{3}} \times \ldots \times B_{k^{2\k + 1} r^{2\k +1}} 
		  % \times  \left] - \frac{k^{2} + 1}{2} r^{2} , \frac{k^{2} - 1}{2} r^{2} \right]   \\
		  \quad \text{and} \quad 
	k\Q_{r} (x_{0}, t_{0} )= \left( 0, \ldots, 0, \frac{k^{2} - 1}{2} r^{2}  \right) \circ \Q_{kr}(x_{0}, t_{0} ),	   
\end{align*}
where $(x_{0}, t_0) \in \R^{N+1}$, 
that are defined starting from the unit cylinder \eqref{unitcylind} for a certain $k>0$.
By definition, it is clear that $|k \Q_{r} (x_{0}, t_{0} )| = k^{Q+2} |\Q_{r} (x_{0}, t_{0} )|$,
and that the cylinders $\Q_{r} (x_{0}, t_{0} )$ are not the balls of any metric.
Thus, the important properties of the cylinders are explicitly given by the following lemmas.
\begin{lemma}
	\label{l1app}
	Let $\Q_{r_{0}} (x_{0}, t_{0}) $ and $\Q_{r_{1}} (x_{1}, t_{1} )$ be two cylinders with non empty 
	intersection, with $(x_{0}, t_{0}), (x_{1}, t_{1} ) \in \R^{N+1}$ and $2r_{0} \ge r_{1} > 0$. Then
	\begin{equation*}
		\Q_{r_{1}} (x_{1}, t_{1} ) \subset k\Q_{r_{0}} (x_{0}, t_{0}) 
	\end{equation*} 
	for some universal constant $k$.
\end{lemma}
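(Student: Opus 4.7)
The plan is to reduce, by the left-invariance of the group law \eqref{grouplaw} and the dilation invariance \eqref{gdil}, to the case $(x_0,t_0)=(0,0)$ and $r_0=1$ via the change of variables $z\mapsto\delta_{1/r_0}\!\big((x_0,t_0)^{-1}\circ z\big)$; this transforms $\Q_{r_0}(x_0,t_0)$ into $\Q_1(0,0)$ and $k\Q_{r_0}(x_0,t_0)$ into $k\Q_1(0,0)$, so that the hypothesis becomes $r_1\in(0,2]$ and the conclusion becomes $\Q_{r_1}(x_1,t_1)\subset k\Q_1(0,0)$ for a suitable universal $k$. Note that from the definition of $k\Q_1(0,0)$ one reads off that this stacked cylinder is characterised by $|x^{(j)}|\le k^{2j+1}$ ($j=0,\ldots,\kappa$) and $t\in\bigl(-\tfrac{k^{2}+1}{2},\,\tfrac{k^{2}-1}{2}\bigr]$.

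Next I would extract a priori bounds on the (translated and rescaled) centre $(x_1,t_1)$. Picking any $(x^*,t^*)\in\Q_1(0,0)\cap\Q_{r_1}(x_1,t_1)$ and writing $(x^*,t^*)=(x_1,t_1)\circ\delta_{r_1}(\xi,\tau)$ with $(\xi,\tau)\in\Q_1$, the group law \eqref{grouplaw} yields $t_1=t^*-r_1^{2}\tau\in(-1,4]$ together with the block relation $x^{*(j)}=\delta_{r_1}(\xi)^{(j)}+\bigl(E(r_1^{2}\tau)\,x_1\bigr)^{(j)}$. Since $|x^{*(j)}|\le 1$ and $|\delta_{r_1}(\xi)^{(j)}|\le r_1^{2j+1}\le 2^{2\kappa+1}$, each block of $E(r_1^{2}\tau)x_1$ is bounded by a universal constant. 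Invoking the canonical form \eqref{B}, the matrix $E(r_1^{2}\tau)=\exp(-r_1^{2}\tau B)$ is block lower triangular and polynomial of degree $\kappa$ in $r_1^{2}\tau\in[-4,0]$; inverting this triangular system recursively (starting from $x_1^{(0)}$) yields $|x_1^{(j)}|\le C_1$ for every $j$, with $C_1$ depending only on $N$ and on the blocks $B_1,\ldots,B_\kappa$.

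I would then repeat the same computation for an arbitrary $w=(y,s)\in\Q_{r_1}(x_1,t_1)$, writing $w=(x_1,t_1)\circ\delta_{r_1}(\xi',\tau')$ with $(\xi',\tau')\in\Q_1$. This gives $s\in(t_1-r_1^{2},t_1]\subset(-5,4]$ and, plugging in the previous bound on $x_1$, $|y^{(j)}|\le r_1^{2j+1}+|(E(r_1^{2}\tau')x_1)^{(j)}|\le C_2$ blockwise, with $C_2$ universal. Comparing with the characterisation of $k\Q_1(0,0)$ recalled above, it suffices to choose $k$ large enough that $k\ge 3$ (which takes care of the temporal interval $(-5,4]$) and $k^{2j+1}\ge C_2$ for every $j=0,\ldots,\kappa$, giving a universal $k$.

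The main obstacle is the spatial estimate, i.e. tracking how the non-abelian drift $E(\tau)=\exp(-\tau B)$ couples the blocks $x^{(0)},\ldots,x^{(\kappa)}$ under the composition $\circ$ and the anisotropic dilation $\delta_r$. The nilpotency of $B$ in the canonical form \eqref{B}, combined with the fact that \eqref{gdil} assigns exactly the weight $2j+1$ to the $j$-th block, which matches the maximal power of $\tau$ appearing in the polynomial expansion of $E(\tau)$ at that block, is precisely what keeps the constant $k$ independent of $r_1$ and of the centres.
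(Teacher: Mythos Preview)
Your strategy is correct and more detailed than the paper's, but there is one subtle point in the reduction step. With the paper's definition $k\Q_{r}(z_{0})=(0,\dots,0,\tfrac{k^{2}-1}{2}r^{2})\circ \Q_{kr}(z_{0})$, the left translation $z\mapsto z_{0}^{-1}\circ z$ does \emph{not} carry $k\Q_{r_{0}}(z_{0})$ to $k\Q_{r_{0}}(0,0)$, because $(0,\sigma)$ and $z_{0}$ do not commute in $\KK$: one computes $z_{0}^{-1}\circ(0,\sigma)\circ z_{0}=\bigl((I-E(\sigma))x_{0},\,\sigma\bigr)$. The paper's own ``without loss of generality $(x_{0},t_{0})=0$'' has the same issue. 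This is easily repaired either by reading the scaled cylinder as $z_{0}\circ k\Q_{r}$ (which makes both the paper's WLOG and your reduction valid, since then $\phi$ acts by left multiplication on a left coset), or by dropping the translation and working directly with general $z_{0}$; your blockwise bounds go through unchanged in either case.

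As for the route: the paper's argument is much shorter because it implicitly works in the product-of-balls representation of Remark~\ref{ball-cil}, where each cylinder is (up to the constant $\bar c$) a product $B_{r}\times B_{r^{3}}\times\dots\times B_{r^{2\kappa+1}}\times(t_0-r^{2},t_0]$; in that picture the inclusion is checked coordinate-block by coordinate-block with triangle inequalities, yielding explicit conditions like $k\ge 5$ and $k^{2}\ge 9$. Your approach instead tracks the full non-abelian composition $\circ$ and inverts the unipotent matrix $E(r_{1}^{2}\tau)$ recursively. This is heavier but has the merit of not invoking the comparability constant $\bar c$ of Remark~\ref{ball-cil} and of making explicit which structural feature---the nilpotency of $B$ in \eqref{B} together with the matching of the dilation weights $2j+1$ to the polynomial degree of $E(\tau)$ at the $j$-th block---is responsible for the universality of $k$.
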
 
\begin{proof}
	Without loss of generality, we may assume $(x_{0}, t_{0}) = (0,0)$. Then we need to choose the constant 
	$k$ in order to satisfy our statement. In particular, if we consider the ball associated to the first $m_{0}$ 
	variables we get that $B_{r_{1}}(x_{1}^{(0)}) \subset B_{k r_{0}}$ if 
	\begin{equation*}
		k r_{0} \ge r_{0} + 2 r_{1} \quad \implies \quad k \ge 5 .
	\end{equation*}
	By repeating the same argument for all the $\k$ blocks of variables, we get that $k$ must satisfy the 
	following conditions: 
	\begin{equation*}
		k^{2j+1} \ge 1 + 2 \cdot 2^{2j + 1} \qquad \text{for  } j = 0, \ldots, \k.
	\end{equation*}
	As far as we are concerned with the condition regarding the time interval, we need $k$ to be such that
	\begin{equation*}
		- \frac{k^{2}+1}{2} r^{2}_{0} \le - r_{0} - 2 r_{1}^{2} \quad \implies 
		\quad k^{2} \ge 9.
	\end{equation*}
	All of these inequalities are satisfied when the first one, i.e. the one corresponding to $j=0$, is satisfied. 
	We choose $k$ to be the smallest parameter satisfying these inequalities. 
\end{proof}

\begin{lemma}
	\label{vitali}
	Let $\{ \Q_{j} \}_{j \in J}$ be an arbitrary collection of slanted cylinders with bounded radius. Then 
	there exists a disjoint countable subcollection $\{ \Q_{j_{i}} \}_{i \in I}$ such that
	\begin{equation*}
		 \bigcup \limits_{j \in J} \Q_{j} = \bigcup \limits_{i=1}^{\infty} k\Q_{j_{i}}.
	\end{equation*}
\end{lemma}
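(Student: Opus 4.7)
The plan is to adapt the classical Vitali covering argument to our slanted cylinders by organizing the family $\{\Q_j\}_{j\in J}$ into \emph{generations} of comparable radius, then extracting a disjoint maximal subfamily generation by generation. The crucial geometric input is Lemma \ref{l1app}, which says that whenever two slanted cylinders meet and the second has radius at most twice the first, the second is swallowed by $k\Q$ applied to the first. This is exactly the content needed for the $k$-enlargement step in the Vitali scheme.

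First I would set $R := \sup_{j\in J} r_j < \infty$, which is finite by hypothesis, and partition $J$ into generations
\begin{equation*}
	J_n := \Big\{ j \in J : \frac{R}{2^n} < r_j \le \frac{R}{2^{n-1}} \Big\}, \qquad n \ge 1.
\end{equation*}
Then I would construct disjoint subfamilies $\F_n \subset \{\Q_j : j\in J_n\}$ inductively: having built $\F_1,\ldots,\F_{n-1}$, apply Zorn's lemma (or a transfinite greedy selection) to pick $\F_n$ maximal among the subfamilies of $\{\Q_j : j\in J_n\}$ whose elements are pairwise disjoint and disjoint from every cylinder in $\F_1 \cup \cdots \cup \F_{n-1}$. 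The final subcollection is $\F := \bigcup_{n\ge 1} \F_n$, which is disjoint by construction. Countability of $\F$ follows because each $\Q \in \F_n$ has Lebesgue measure at least $(R/2^n)^{Q+2} |\Q_1|$; since $\R^{N+1}$ is $\sigma$-finite, any disjoint family of sets of positive measure bounded below on each bounded ball is at most countable, and taking the countable union over $n$ and over an exhaustion of $\R^{N+1}$ by balls gives countability.

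The covering property is the heart of the argument. Fix $j\in J$ and let $n$ be the unique index with $j \in J_n$. By the maximality used to construct $\F_n$, the cylinder $\Q_j$ must intersect some $\Q_{j_i}$ belonging to $\F_1 \cup \cdots \cup \F_n$ (otherwise we could enlarge $\F_n$ by adjoining $\Q_j$). Call its radius $r_{j_i}$. Since $\Q_{j_i}$ lies in one of the earlier generations $J_\ell$ with $\ell \le n$, we have
\begin{equation*}
	r_{j_i} > \frac{R}{2^\ell} \ge \frac{R}{2^n}, \qquad r_j \le \frac{R}{2^{n-1}} = 2 \cdot \frac{R}{2^n} < 2 r_{j_i},
\end{equation*}
so the hypotheses $r_j \le 2 r_{j_i}$ and $\Q_j \cap \Q_{j_i} \ne \emptyset$ of Lemma \ref{l1app} are met. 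That lemma then yields $\Q_j \subset k \Q_{j_i}$, which shows $\bigcup_{j\in J} \Q_j \subset \bigcup_{i} k\Q_{j_i}$. The reverse inclusion is trivial since each $\Q_{j_i}$ is itself one of the original $\Q_j$ and $\Q_{j_i} \subset k\Q_{j_i}$.

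The only mildly delicate point is the inductive maximal selection of $\F_n$: a direct greedy algorithm may fail when $J_n$ is uncountable, so I would invoke Zorn's lemma on the poset of disjoint subfamilies of $\{\Q_j : j \in J_n\}$ compatible with $\F_1 \cup \cdots \cup \F_{n-1}$, ordered by inclusion, and check that unions of chains are themselves disjoint (immediate from the disjointness condition being a property of pairs). Together with the measure-theoretic countability bookkeeping above, this yields the desired countable disjoint subfamily and completes the proof.
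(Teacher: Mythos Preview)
Your approach is exactly what the paper has in mind: the paper does not give a detailed proof but simply remarks that one follows the classical Vitali argument, replacing the metric-ball inclusion $B_{r_1}(x_1)\subset 5B_{r_0}$ by Lemma \ref{l1app}. Your generation-by-generation maximal selection and the radius comparison $r_j < 2r_{j_i}$ are the standard steps, and your invocation of Lemma \ref{l1app} to obtain $\Q_j \subset k\Q_{j_i}$ is precisely the intended substitution.

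One small point: your argument for the ``reverse inclusion'' does not actually establish $\bigcup_i k\Q_{j_i} \subset \bigcup_{j\in J}\Q_j$; you only show $\Q_{j_i}\subset k\Q_{j_i}$ and $\Q_{j_i}\in\{\Q_j\}_{j\in J}$, which gives no control on the enlarged cylinders. In fact the equality as written in the statement is generally false (the $k$-enlargements can protrude outside $\bigcup_j \Q_j$), and only the inclusion $\bigcup_{j\in J}\Q_j \subset \bigcup_i k\Q_{j_i}$ is true and is all that is used downstream (Lemmas \ref{max-fun} and \ref{ink-spot2}). Your proof of that inclusion is correct; simply drop the reverse-inclusion sentence.
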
 
\noindent
The proof of Lemma \ref{vitali} is the same as the classical proof of the Vitali covering lemma, where 
we employ Lemma \ref{l1app} instead of the fact that in any metric space $B_{r_{1}}(x_{1}) \subset 
5 B_{r_{0}}$, if $B_{r_{1}} (x_{1}) \cap B_{r_{0}} \ne \emptyset$ and $r_{1} \le 2 r_{0}$.

\subsection{A generalized Lebesgue differentiation theorem}
For the readers convenience, we also recall the definition of maximal function: 
\begin{equation*}
	M f ( x,t) = \sup \limits_{\Q : (x,t) \in \Q} \frac{1}{|\Q|} \int \limits_{\Q \cap \Omega} |f(y,s)| \, dy \, ds ,
\end{equation*}
where the supremum is taken over cylinders of the form $(y,s) + R\Q_{1}$. 
\begin{lemma}
	\label{max-fun}
	For every $\l > 0$ and $f \in L^1(\O)$ , we have
	\begin{equation*}
		| \{ Mf < \l \} \cap \Omega | \le \frac{C}{\l} \| f \|_{L^{1}(\O)}.
	\end{equation*}
\end{lemma}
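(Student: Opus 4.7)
The statement is the classical weak $(1,1)$ estimate for the non-centered Hardy–Littlewood maximal function built on the slanted cylinders of the Kolmogorov geometry (reading the superlevel set $\{Mf>\lambda\}$ on the left-hand side, as $\{Mf<\lambda\}$ would be trivially bounded by $|\O|$). The plan is the textbook Vitali-type covering argument, in which the role of the classical metric Vitali lemma is played by the slanted version just proved in Lemma \ref{vitali}, with the enlargement factor $5$ of the Euclidean case replaced by the universal constant $k$ furnished by Lemma \ref{l1app}.

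First I would fix $\lambda>0$ and set $E_{\lambda}:=\{Mf>\lambda\}\cap\O$. For every point $z\in E_{\lambda}$, by the very definition of $Mf$, there exists a slanted cylinder $\Q^{(z)}$ of the form considered in the supremum, containing $z$, such that
\[
\lambda \;<\; \frac{1}{|\Q^{(z)}|}\int_{\Q^{(z)}\cap\O}|f(w)|\,dw ,
\]
which gives both the selection inequality
\[
|\Q^{(z)}| \;<\; \frac{1}{\lambda}\int_{\Q^{(z)}\cap\O}|f| \;\le\;\frac{1}{\lambda}\|f\|_{L^{1}(\O)}
\]
and, as a direct corollary, a uniform bound on the radii of all selected cylinders depending only on $\lambda$, $\|f\|_{L^{1}(\O)}$ and the homogeneous dimension $Q+2$. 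This uniform boundedness of radii is precisely the hypothesis needed to invoke Lemma \ref{vitali}.

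Next I would apply Lemma \ref{vitali} to the family $\{\Q^{(z)}\}_{z\in E_{\lambda}}$ in order to extract a countable \emph{disjoint} subcollection $\{\Q^{(z_{i})}\}_{i\ge 1}$ satisfying
\[
E_{\lambda}\;\subset\;\bigcup_{z\in E_{\lambda}}\Q^{(z)}\;=\;\bigcup_{i\ge 1}k\,\Q^{(z_{i})} .
\]
Using the scaling $|k\,\Q^{(z_{i})}|=k^{Q+2}|\Q^{(z_{i})}|$ (which follows from the fact that $\delta_{k}$ has Jacobian $k^{Q+2}$, as recalled in the remark after Definition \ref{hom-norm}), together with the selection inequality at each $\Q^{(z_{i})}$ and the disjointness of the chosen cylinders, I would conclude
\[
|E_{\lambda}|\;\le\;\sum_{i\ge 1}|k\,\Q^{(z_{i})}|\;=\;k^{Q+2}\sum_{i\ge 1}|\Q^{(z_{i})}|\;\le\;\frac{k^{Q+2}}{\lambda}\sum_{i\ge 1}\int_{\Q^{(z_{i})}\cap\O}|f|\;\le\;\frac{k^{Q+2}}{\lambda}\|f\|_{L^{1}(\O)},
\]
which is the desired estimate with the universal constant $C=k^{Q+2}$ depending only on the homogeneous dimension through the constant $k$ of Lemma \ref{l1app}.

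The only delicate point, and what would constitute the main obstacle if one worked naively, is that the slanted cylinders are not balls of any metric, so the classical Vitali lemma is unavailable; however this has already been bypassed by Lemma \ref{vitali} and the engulfing property of Lemma \ref{l1app}. The second potential pitfall, namely that the selected cylinders could have unbounded radii and thus violate the hypothesis of Lemma \ref{vitali}, is automatically excluded by the $L^{1}$ bound $|\Q^{(z)}|<\lambda^{-1}\|f\|_{L^{1}(\O)}$ noted above. Everything else is a mechanical transcription of the Euclidean weak-$(1,1)$ proof.
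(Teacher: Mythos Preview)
Your proof is correct and follows essentially the same Vitali-covering strategy as the paper: select a cylinder at each point of the superlevel set via the definition of $Mf$, apply Lemma \ref{vitali} to extract a disjoint subfamily, and conclude using the scaling $|k\Q|=k^{Q+2}|\Q|$. Your version is in fact slightly more careful than the paper's, since you explicitly verify the bounded-radii hypothesis needed for Lemma \ref{vitali} and you correctly flag that the inequality in the statement should read $\{Mf>\lambda\}$.
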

\begin{proof}
	Let us consider $(x,t) \in \{ Mf < \l \} \cap \Omega$. Then there exists a cylinder $\Q$ such that 
	$(x,t) \in \Q$ and 
	\begin{equation*}
		\int \limits_{\Q \cap \Omega} |f(y,s)| \, dy \, ds \ge \frac{\l}{2} |\Q \cap \O|.
	\end{equation*}
	Then $\{ Mf < \l \} \cap \Omega$ is covered with cylinders $\{ \Q_{j} \}$ such that the previous inequality 
	holds. From Lemma \ref{vitali}, there exists a disjoint countable subcollection $\{ Q_{j_{i}} \}$ so that
	\begin{equation*}
		\{ Mf < \l \} \cap \Omega = \bigcup \limits_{j=1}^{\infty} Q_{j} \subset 
		\bigcup \limits_{i=1}^{\infty} kQ_{j_{i}},
	\end{equation*}
	for some integer $k$. Thus, we get
	\begin{align*}
		\| f \|_{L^{1}(\O)} \ge \int \limits_{\O \cap \cup_{i} Q_{j_{i}} } |f|
		\ge \frac{\l}{2} \sum \limits_{i=1}^{\infty} | \Q_{j_{i}} \cap \Omega|  = 
		\frac{\l}{2k^{Q+2}} \Big | \bigcup \limits_{i=1}^{\infty} k\Q_{j_{i}} \cap \Omega \Big |
		\ge \frac{\l}{2k^{Q+2}} \Big | \{ Mf < \l \} \cap \Omega \Big |.
	\end{align*} 
	Thus, the claim is proved when $C=2k^{Q+2}$.
\end{proof}	
The following generalized version of the Lebesgue differentiation theorem holds. 
\begin{theorem}[Genaralized Lebesgue Differentiation Theorem]
	\label{gen-leb}
	Let $f \in L^{1}(\O, dx \otimes dt)$, where $\O$ is an open subset of $\R^{N+1}$. Then for a.e. 
	$(x,t) \in \O$ 
	\begin{equation*}
		\lim \limits_{r \to 0^{+}} \frac{1}{|\Q_{r}(x,t)|} \int \limits_{\Q_{r}(x,t)} 
		| f(y,s) - f(x,t)| \, dy \, ds = 0.
	\end{equation*}
\end{theorem}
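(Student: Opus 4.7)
The plan is to follow the classical density argument that proves the Euclidean Lebesgue differentiation theorem, since every tool usually invoked has now been set up in this non-Euclidean setting: the Vitali-type covering (Lemma \ref{vitali}) and the weak-type $(1,1)$ estimate for the cylindrical maximal operator $M$ (Lemma \ref{max-fun}). For every $\alpha > 0$, it will suffice to show that the set
\[
E_\alpha := \Big\{(x,t) \in \O : \limsup_{r\to 0^+} \frac{1}{|\Q_r(x,t)|}\int_{\Q_r(x,t)}|f(y,s) - f(x,t)|\, dy\, ds > \alpha\Big\}
\]
has Lebesgue measure zero, since a countable intersection over $\alpha = 1/n$ with $n \in \N$ then yields the pointwise statement almost everywhere in $\O$.

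First I would treat continuous test functions. For any $g \in C_c(\O)$ and any $z_0 = (x_0, t_0) \in \O$, the identity $\Q_r(z_0) = z_0 \circ \delta_r(\Q_1)$, combined with the explicit form \eqref{gdil} of the dilation and the continuity $E(\tau) \to I$ as $\tau \to 0$, shows that $\Q_r(z_0)$ collapses to the single point $z_0$ in the Euclidean sense as $r \to 0^+$. Uniform continuity of $g$ on a compact neighborhood of $z_0$ then forces the above limsup to vanish pointwise in $\O$, so the claim holds trivially for $f = g$.

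For general $f \in L^1(\O)$, fix $\epsilon > 0$ and choose $g \in C_c(\O)$ with $\|f-g\|_{L^1(\O)} < \epsilon$ by density of $C_c(\O)$ in $L^1(\O)$. Setting $h = f - g$, the triangle inequality inside the average together with the previous step gives
\[
\limsup_{r\to 0^+}\frac{1}{|\Q_r(x,t)|}\int_{\Q_r(x,t)}|f(y,s)-f(x,t)|\,dy\,ds \;\le\; Mh(x,t) + |h(x,t)|
\]
pointwise in $\O$, and so $E_\alpha \subset \{Mh > \alpha/2\} \cup \{|h| > \alpha/2\}$. Applying Lemma \ref{max-fun} to bound the first set and Chebyshev's inequality to bound the second yields $|E_\alpha| \le \tilde{C}\,\epsilon/\alpha$ for a universal constant $\tilde{C}$ depending only on $Q$. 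Since $\epsilon > 0$ was arbitrary, $|E_\alpha| = 0$, which concludes the proof after letting $\alpha \to 0^+$ along a countable sequence.

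The only step that is not a verbatim copy of the Euclidean argument is the Euclidean-sense shrinkage of $\Q_r(z_0)$, which I expect to be the main (and ultimately mild) obstacle. This reduces to the uniform-in-$\zeta \in \Q_1$ convergence $z_0 \circ \delta_r(\zeta) \to z_0$ as $r \to 0^+$, which is immediate from $\delta_r = \diag(r I_{m_0}, r^3 I_{m_1}, \ldots, r^{2\kappa+1}I_{m_\kappa}, r^2) \to 0$ and $E(0) = I$. The open-set character of $\O$ ensures that for every interior $z_0$ and every sufficiently small $r$ the cylinder $\Q_r(z_0)$ lies inside $\O$, so no boundary corrections affect the final estimate.
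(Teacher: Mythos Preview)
Your proposal is correct and follows essentially the same approach as the paper: the paper does not spell out a proof but simply states that the theorem is obtained from the weak-type $(1,1)$ maximal estimate (Lemma \ref{max-fun}) exactly as in \cite[Theorem 2.5.1]{IS-fn}, which is precisely the density-plus-maximal-function argument you carry out. Your write-up is in fact more detailed than what the paper provides.
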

\noindent
Theorem \ref{gen-leb} is obtained from the following Lemma \ref{max-fun} exactly as in \cite[Theorem 2.5.1]{IS-fn} by considering Lemma \ref{max-fun}.

\subsection{Ink-spots theorem without time delay}
\begin{lemma}
	\label{ink-spot2}
	Let $E \subset F \subset \Q_{1}$ be two bounded measurable sets. We make the following assumptions for 
	some constant $\mu \in (0,1)$:
	\begin{itemize}
		\item $E < (1 - \mu) |\Q_{1}|$;
		\item if for any cylinder $\Q \subset \Q_{1}$ such that $|\Q \cap E| \ge (1-\mu)|\Q|$, then $\Q \subset F$.
	\end{itemize}
	Then $|E| \le (1 - c\mu) |F|$ for some universal constant $c$ only depending on $N$.
\end{lemma}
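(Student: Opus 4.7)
The plan is to carry out a Krylov--Safonov-type ink-spots stopping-time argument adapted to the slanted cylinders $\Q_r(z)$ introduced in Appendix~A.1. For almost every $(x,t) \in E$, the generalized Lebesgue differentiation Theorem~A.3 yields
$$\lim_{r \to 0^+}\frac{|E \cap \Q_r(x,t)|}{|\Q_r(x,t)|} = 1,$$
so arbitrarily small cylinders around $(x,t)$ have density of $E$ greater than $1-\mu$. On the other hand, the global bound $|E| < (1-\mu)|\Q_1|$ prevents this density condition from surviving when the cylinder becomes comparable to $\Q_1$. Hence I would attach to each such Lebesgue point a ``critical'' cylinder $\Q(x,t) \subset \Q_1$ containing $(x,t)$, of the largest radius still satisfying $|E \cap \Q(x,t)| \geq (1-\mu)|\Q(x,t)|$; maximality is achieved up to arbitrarily small error by continuity of the measure in $r$. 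The hypothesis of the lemma then forces $\Q(x,t) \subset F$.

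The family $\{\Q(x,t)\}_{(x,t)\in E}$ covers almost every point of $E$, and I apply the Vitali-type Lemma~A.2 to extract a countable, pairwise disjoint subfamily $\{\Q_i\}_{i\in \N}$ with $E \subset \bigcup_i k\Q_i$ up to a null set, where $k$ is the universal constant from Lemma~A.1. Since each $\Q_i \subset F$ and the $\Q_i$ are pairwise disjoint, $\sum_i |\Q_i| \le |F|$. By the stopping-time construction, each selected $\Q_i$ admits an enlarged companion cylinder $\widetilde{\Q}_i$, of size comparable to $k\Q_i$, on which the density strictly drops: $|\widetilde{\Q}_i \cap E| < (1-\mu)|\widetilde{\Q}_i|$, equivalently $|\widetilde{\Q}_i \setminus E| > \mu |\widetilde{\Q}_i|$. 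Combining this with $|\Q_i \setminus E| \leq \mu |\Q_i|$ produces, in each enlarged cylinder, an ``annular'' set of measure at least $c_0 \mu |k\Q_i|$ lying in $\Q_1 \setminus E$, for a universal constant $c_0 = c_0(N) > 0$.

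To conclude I must convert this local $\mu$-gain into the desired global estimate. The naive bound $|E| \leq \sum_i |E \cap k\Q_i| \leq (1-\mu) k^{Q+2} \sum_i|\Q_i|$ is useless because $(1-\mu)k^{Q+2} > 1$ for small $\mu$, so the Vitali overhead would swamp the gain. The fix, which I expect to be the main technical obstacle, is to avoid paying the full $k^{Q+2}$ factor: this is typically done by exploiting the companion cylinders $\widetilde{\Q}_i$ together with a refined inclusion--exclusion on the disjoint selected cylinders, showing that $F \setminus E$ carries a positive fraction, quantitatively at least $c \mu |F|$, which is equivalent to $|E| \leq (1 - c\mu)|F|$. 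This is the point where the non-metric structure of the family $\Q_r(z)$ makes the argument more delicate than the classical Euclidean or metric-ball case, and where the full strength of both hypotheses $E \subset F$ and $|E| < (1-\mu)|\Q_1|$ must be used simultaneously.
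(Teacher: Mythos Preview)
Your setup---Lebesgue differentiation, maximal cylinders $\Q_i \subset \Q_1$ containing each Lebesgue point of $E$ with $|\Q_i \cap E| \ge (1-\mu)|\Q_i|$, then Vitali extraction of a disjoint subfamily with $E \subset \bigcup_i k\Q_i$---is exactly what the paper does. But you have misidentified the ``main technical obstacle'': there is none, and the companion cylinders $\widetilde{\Q}_i$, the annular sets, and the refined inclusion--exclusion you anticipate are all unnecessary.

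The mistake is in how you try to close the argument. You attempt to bound $|E|$ from above via $|E| \le \sum_i |E \cap k\Q_i|$, which indeed loses the factor $k^{Q+2}$ against the gain $(1-\mu)$ and fails. The paper instead bounds $|F \setminus E|$ from below using only the \emph{disjoint} selected cylinders $\Q_i$, never their enlargements. Maximality forces $|\Q_i \cap E| = (1-\mu)|\Q_i|$ (not merely $\ge$; a slightly larger cylinder would otherwise still satisfy the density condition), and since $\Q_i \subset F$ by hypothesis, one has $|\Q_i \cap (F \setminus E)| = \mu |\Q_i|$. Disjointness of the $\Q_i$ then gives
\[
|F \setminus E| \;\ge\; \sum_i |\Q_i \cap (F\setminus E)| \;=\; \mu \sum_i |\Q_i| \;=\; \mu\, k^{-(Q+2)} \sum_i |k\Q_i| \;\ge\; k^{-(Q+2)}\mu\,|E|,
\]
so $|F| \ge (1 + \bar c\,\mu)|E|$ with $\bar c = k^{-(Q+2)}$, and $|E| \le (1 - c\mu)|F|$ with $c = \bar c/2$. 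The Vitali factor $k^{Q+2}$ is paid against $\mu$, not against $(1-\mu)$, which is harmless---it only shrinks the constant $c$. No further use of the non-metric structure is needed beyond the Vitali lemma you already invoked.
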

\begin{proof}
	Thanks to Theorem \ref{gen-leb}, for almost all points $z \in E$ there is some cylinder $\Q^{z}$
	containing $z$ such that $|\Q^{z} \cap E| \ge (1 - \mu) |\Q^{z}|$. Thus, for all Lebesgue points $z \in E$ we 
	choose a maximal cylinder $\Q^{z} \subset \Q_{1}$ that contains $z$ and such that $|\Q^{z} \cap E| \ge (1 - 
	\mu) |\Q^{z}|$. Here $\Q^{z}= \Q_{\overline r} (\overline x , \overline t)$ for some $\overline r > 0$ and
	$(\overline x , \overline t) \in \Q_{1}$. In particular, we have that $\Q^{z}$ differs from $\Q_{1}$ and 
	$\Q^{z} \subset F$ by our assumption.
	
	First of all we prove that $|\Q^{z} \cap E| = (1 - \mu) |\Q^{z}|$. 
	By contradiction, let us suppose that is not true.
	Then there exists $\d > 0$ small enough and $\overline \Q$ such that $\Q^{z} \subset \overline \Q 
	\subset (1 + \d) \Q^{z}$, $\overline \Q \subset \Q_{1}$ and $|\overline \Q \cap E| > (1 - \mu) |\Q^{z}|$, and
	this contradicts the maximality of the choice of $\Q^{z}$. 
	
	Then we recall that the family of cylinders $\{ \Q^{z} \}_{z \in E}$ covers the set $E$. Thanks to Lemma 
	\ref{vitali} and considering that $E$ is a bounded set, 
	we can extract a finite subfamily of non overlapping cylinders $\Q_{j} := \Q^{z_{j}}$ such that 
	$E \subset \cup_{j=1}^{n} k \Q_{j}$. Since $\Q_{j} \subset F$ and $|\Q_{j} \cap E| = (1 - \mu) |\Q_{j}|$, we 
	have that $|\Q_{j} \cap F \setminus E| = \mu |\Q^{z}|$. Therefore, 
	\begin{align*}
		|F \setminus E| \ge \sum \limits_{j=1}^{n} |\Q_{j} \cap F \setminus E| \ge 
		\sum \limits_{j=1}^{n} \mu |\Q_{j}| = k^{-(Q+2)} \mu  \sum \limits_{j=1}^{n} |k\Q_{j}|
		\ge k^{-(Q+2)} \mu  |E|.
	\end{align*}
	Thus, we get that $|F| \ge (1 + \overline c \mu) |E|$, with $\overline c = k^{-(Q+2)}$. Since $\overline c \mu \in 
	(0,1)$, we complete the proof by choosing $c = \overline c /2$. 	
\end{proof}

\subsection{Proof of Theorem \ref{ink-spot1} and Corollary \ref{ink-spot-cor}}
In order to proceed with the proof of the Ink Spots Theorem, we first need to recall two preliminary results. 
\begin{lemma}
	\label{leak1}
	Consider a (possibly infinite) sequence of intervals $(a_{j} - h_{k}, a_{j}]$. Then
	\begin{equation*}
		\Big | \bigcup \limits_{k} (a_{k}, a_{k} + mh_{k}] \Big | 
		\ge \frac{m}{m+1} \Big | \bigcup \limits_{k} (a_{k} - h_{k}, a_{k}] \Big | .
	\end{equation*}
\end{lemma}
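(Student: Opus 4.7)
Set $U^{-} := \bigcup_{k}(a_{k}-h_{k},a_{k}]$, $U^{+} := \bigcup_{k}(a_{k},a_{k}+mh_{k}]$, and introduce the auxiliary set $W := U^{-}\cup U^{+} = \bigcup_{k}(a_{k}-h_{k},a_{k}+mh_{k}]$. Since $U^{-}\subseteq W$, the desired inequality will follow at once from the (a priori stronger) bound
$$
|U^{+}|\ \ge\ \frac{m}{m+1}\,|W|.
$$
By monotone convergence I may assume the collection is finite. Writing $W$ as a finite disjoint union of its maximal half-open components $W = \bigsqcup_{\alpha}(c_{\alpha},d_{\alpha}]$, both sides of the inequality split additively over $\alpha$, so the problem reduces to proving, for each maximal component $J = (c,d]$,
$$
|U^{+}\cap J|\ \ge\ \frac{m}{m+1}\,(d-c).
$$

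I plan to prove this component-wise estimate by induction on the number $n$ of ``full'' intervals $(a_{k}-h_{k},a_{k}+mh_{k}]$ contained in $J$. The base case $n=1$ is immediate: then $J=(a_{k}-h_{k},a_{k}+mh_{k}]$, so $|J|=(m+1)h_{k}$ and $|U^{+}\cap J|=mh_{k}$, giving equality. For $n\ge 2$, I would select the index $k_{\star}$ with $a_{k_{\star}}+mh_{k_{\star}}=d$ (such an index exists by maximality of the component $J$), peel off the segment $(a_{k_{\star}},d]\subseteq U^{+}\cap J$ of length $mh_{k_{\star}}$, and apply the inductive hypothesis to each of the maximal sub-components of $(W\cap J)\setminus(a_{k_{\star}}-h_{k_{\star}},d]$, which are covered by strictly fewer intervals.

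\emph{Main obstacle.} The delicate step is the inductive bookkeeping after the removal of the interval $(a_{k_{\star}}-h_{k_{\star}},d]$: the residual sub-components may only partially tile $(c,a_{k_{\star}}-h_{k_{\star}}]$, so one must verify that the ratio $m/(m+1)$ is preserved when combining their contributions with the peeled-off segment. I expect the cleanest implementation is a greedy chain argument: starting from $d$, iteratively select overlapping indices $k_{\star}=k_{0},k_{1},\ldots,k_{p}$ in $J$ so that each $(a_{k_{i+1}},a_{k_{i+1}}+mh_{k_{i+1}}]$ meets $(a_{k_{i}}-h_{k_{i}},a_{k_{i}}]$, terminating at $a_{k_{p}}-h_{k_{p}}=c$. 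The $U^{+}$-contribution captured by the chain is at least $\sum_{i}mh_{k_{i}}$, while its total span in $J$ is at most $\sum_{i}(m+1)h_{k_{i}}$, producing the desired ratio after summation. This one-dimensional chaining plays the role that Vitali-type covering arguments play in higher dimensions, but exploits the linear order on $\mathbb R$ to avoid the loss of any constant.
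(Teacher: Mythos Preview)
The paper does not give its own proof of this lemma; it simply refers to \cite[Lemma~10.8]{IS-weak}. So there is nothing to compare against, but let me comment on your sketch.

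Your reductions (to a finite family, then to a single connected component $J=(c,d]$ of $W$, then to the bound $|U^{+}\cap J|\ge\frac{m}{m+1}|J|$) are correct, and a chain argument is indeed the right tool. The gap is in the last sentence: for any chain $k_{0},\ldots,k_{p}$, the ``plus'' pieces $(a_{k_{i}},a_{k_{i}}+mh_{k_{i}}]$ may overlap, so their union has measure \emph{at most} $\sum_{i}mh_{k_{i}}$, not at least. Together with $|J|\le\sum_{i}(m+1)h_{k_{i}}$ this yields no information. (It is also not possible in general to choose the chain so that the plus pieces are disjoint: take $m=1$ and the two intervals $(-0{.}5,10]$ and $(-1,9]$.)

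The repair is to keep track of the overlaps. Take a \emph{minimal} subcover $I_{k_{0}},\ldots,I_{k_{p}}$ of $J$ by the full intervals $I_{k}=(a_{k}-h_{k},a_{k}+mh_{k}]$, ordered so that both left and right endpoints are increasing; then only consecutive $I_{k_{i}}$ intersect. Writing $P_{i}=(a_{k_{i}},a_{k_{i}}+mh_{k_{i}}]$, one checks that
\[
|P_{i}\cap P_{i+1}|\ \le\ \frac{m}{m+1}\,|I_{k_{i}}\cap I_{k_{i+1}}|,
\]
since $I_{k_{i}}\cap I_{k_{i+1}}\subseteq I_{k_{i+1}}$ has length at most $(m+1)h_{k_{i+1}}$, while the overlap of the plus parts misses at least the initial segment $(a_{k_{i+1}}-h_{k_{i+1}},a_{k_{i+1}}]$ of length $h_{k_{i+1}}$. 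Two-term inclusion--exclusion then gives
\[
\Big|\bigcup_{i}P_{i}\Big|=\sum_{i}mh_{k_{i}}-\sum_{i}|P_{i}\cap P_{i+1}|
\ \ge\ \frac{m}{m+1}\Big(\sum_{i}(m+1)h_{k_{i}}-\sum_{i}|I_{k_{i}}\cap I_{k_{i+1}}|\Big)=\frac{m}{m+1}|J|,
\]
which is exactly what you need. So your plan is salvageable, but the bookkeeping you flagged as the ``main obstacle'' has to be done at the level of pairwise overlaps, not termwise sums.
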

\noindent
The proof of Lemma \ref{leak1} can be found in \cite[Lemma 10.8]{IS-weak}. 
Here, we only report the proof of the following lemma, that is an extension of Lemma 10.9 \cite{IS-weak}.
\begin{lemma}
	\label{leak2}
	Let $\{ \Q_{j} \}$ be a collection of slanted cylinders and let $\overline \Q_{j}^{m}$ be the corresponding versions as in 
	\eqref{qm}. Then 
	\begin{equation*}
		\Big | \bigcup \limits_{j}  \overline \Q_{j}^{m} \Big | \ge \frac{m}{m+1} 
		\Big | \bigcup \limits_{j}  \Q_{j} \Big |.
	\end{equation*}
\end{lemma}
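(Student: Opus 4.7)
The plan is to reduce Lemma \ref{leak2} to its one-dimensional counterpart Lemma \ref{leak1} via a Fubini slicing argument in time. The key structural observation that permits this reduction in the non-Euclidean Kolmogorov setting is that the group translation $(0,\ldots,0, m r^2)\circ$ appearing in \eqref{qm} acts as a pure Euclidean shift in the time coordinate: a direct unwinding of \eqref{grouplaw} gives
$$(0,\ldots,0, m r^2) \circ (y, t) \;=\; (y + E(t)\cdot 0,\; t + m r^2) \;=\; (y,\; t + m r^2),$$
so that $\Q_j$ and $\overline{\Q}_j^m$ share the same projection onto the spatial hyperplane $\R^N$ and the stacking is genuinely vertical.

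First, for each cylinder $\Q_j = \Q_{r_j}(z_j)$ and for a.e.\ $y \in \R^N$, I would identify the vertical fiber $(\Q_j)_y := \{t : (y, t) \in \Q_j\}$ as a (possibly empty) interval. Unwinding \eqref{rcylind}, \eqref{grouplaw}, and \eqref{gdil}, the condition $(y,t) \in \Q_{r_j}(z_j)$ is equivalent to the finite system $|(y - E(t - t_j) x_j)^{(k)}| \leq r_j^{2k+1}$ for $k = 0, \ldots, \k$ together with $t \in (t_j - r_j^2, t_j]$. Since $E(\cdot)$ has polynomial entries by the canonical form \eqref{B}, each block constraint is a convex-quadratic condition on $t$, hence defines an interval, and the intersection of finitely many intervals remains an interval. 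By the pure time-shift identity, the corresponding fiber $(\overline{\Q}_j^m)_y$ is the time-translated interval extending $m r_j^2$ units forward of $(\Q_j)_y$, matching the endpoint alignment required by Lemma \ref{leak1} with $a_j(y)$ equal to the right-endpoint of $(\Q_j)_y$ and $h_j(y)$ its length.

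Second, applying Lemma \ref{leak1} fiberwise to the families $\{(\Q_j)_y\}_j$ and $\{(\overline{\Q}_j^m)_y\}_j$ yields, for a.e.\ $y \in \R^N$,
$$\Big|\bigcup_j (\overline{\Q}_j^m)_y\Big| \;\geq\; \frac{m}{m+1} \Big|\bigcup_j (\Q_j)_y\Big|,$$
and Fubini's theorem, integrated over $y$, delivers the claimed inequality.

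The main obstacle is the geometric step of verifying the interval structure of the fibers in the Kolmogorov setting: the group law deforms the slanted cylinders according to the matrix $E(\tau)$, so it is a priori unclear that vertical fibers remain intervals or that their endpoints align in the way Lemma \ref{leak1} requires. The resolution relies on the polynomial structure of $E$ imposed by \eqref{B}, which keeps each block's defining inequality a convex-quadratic condition in $t$, while the pure time-shift character of the stacking preserves this structure across $\overline{\Q}_j^m$. With these two ingredients in place, the reduction to Lemma \ref{leak1} proceeds exactly as in the classical parabolic argument.
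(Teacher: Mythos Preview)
Your slicing idea is natural, but the crucial step fails: the fiber of the delayed cylinder does not have the shape Lemma \ref{leak1} requires. You correctly compute that $(0,\ldots,0,mr_j^2)\circ(y,t)=(y,\,t+mr_j^2)$, so under your reading of \eqref{qm} the fiber of $\overline{\Q}_j^m$ at $y$ is simply the translate $(\Q_j)_y + mr_j^2$. This interval has the \emph{same} length $h_j(y)$ as $(\Q_j)_y$, and its left endpoint is $a_j(y)-h_j(y)+mr_j^2$, which equals $a_j(y)$ only when $h_j(y)=mr_j^2$ --- impossible since $h_j(y)\le r_j^2$. Lemma \ref{leak1}, on the other hand, compares $(a_k-h_k,a_k]$ with $(a_k,a_k+mh_k]$: the new interval must start exactly where the old one ends and be $m$ times as long. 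Neither condition is met by a rigid time translate, so your invocation of Lemma \ref{leak1} is not justified. The paper's argument avoids this by slicing first in $x^{(0)}$ and then using the explicit ball representation of $\overline{\Q}_j^m$ from Remark \ref{ball-cil}, under which the delayed cylinder has time range $(t_j,t_j+mr_j^2]$ of length $mr_j^2$; after a further Fubini in $(x^{(1)},\ldots,x^{(\k)})$ one is left with the pair of intervals $(t_j-r_j^2,t_j]$ and $(t_j,t_j+mr_j^2]$, to which Lemma \ref{leak1} applies with $h_k=r_j^2$ uniformly in the spatial variable.

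A secondary gap: your claim that each block constraint is ``convex-quadratic in $t$'' holds only for $\k\le 1$, i.e.\ the Fokker--Planck case. For block $k\ge 2$ the vector $(E(t-t_j)x_j)^{(k)}$ is a polynomial of degree $k$ in $t$ (since $B$ in canonical form \eqref{B} is nilpotent of step $\k+1$), so the constraint $|(y-E(t-t_j)x_j)^{(k)}|\le r_j^{2k+1}$ is a degree-$2k$ inequality in $t$ and need not cut out an interval. Thus even the interval structure of $(\Q_j)_y$ is not established in the general ultraparabolic setting your proof is meant to cover.
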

\begin{proof}
	Because of Fubini's Theorem we know that for any set $\O \subset \R^{N+1}$
	\begin{equation*}
		|\O| = \int | \{ (x^{(1)}, \ldots, x^{(\k)},t) : (x^{(0)}, x^{(1)}, \ldots, x^{(\k)},t) \in \O \} | \, dx^{(0)}. 
	\end{equation*}
	%From now on, let us denote by $y = (x^{(1)}, \ldots, x^{(\k)}) \in \R^{N-m_{0}}$ and let 
	%$\overline \circ$ the last $\k -1$ elements of the product law \eqref{grouplaw}. 
	Therefore, in order to prove our statement it is sufficient to show that for every $x^{(0)} \in \R^{m_{0}}$ 
	\begin{align*}
		\Big | \Big \{ (x^{(1)}, \ldots, x^{(\k)},t) &: (x^{(0)}, x^{(1)}, \ldots, x^{(\k)},t) \in \bigcup \limits_{j}  \overline 
		\Q_{j}^{m} \Big \} \Big | \\
		&\ge  \frac{m}{m+1} 
		\Big | \Big \{ (x^{(1)}, \ldots, x^{(\k)},t) : (x^{(0)}, x^{(1)}, \ldots, x^{(\k)},t) \in \bigcup \limits_{j}  \Q_{j} 
		\Big \} \Big |
	\end{align*}
	From now on, let us consider a fixed $\overline x \in \R^{m_{0}}$. 
	Any cylinder $\Q_{j}$ is a cylinder with center 
	\begin{equation*}
	(x_{j}^{(0)}, x_{j}^{(1)}, \ldots, x_{j}^{(\k)}, t_{j}) \in \R^{N+1}
	\end{equation*}
	and radius $r_{j} >0$. $\overline \Q_{j}^{m}$ is its delayed version \eqref{qm}, that
	thanks to Remark \ref{ball-cil} can equivalently be represented as follows
	\begin{equation*}
		\overline \Q^{m}_{j} = (t_{0}, t_{0} + mr_{j}^{2}] 
		\times B_{r}(x^{(0)}_{j}) \times B_{(m+2)r_{j}^{3}}(x^{(1)}_{j}) \times \ldots 
		\times B_{(m^{\k} + 2 \sum_{i=0}^{\k} m^{i})r_{j}^{2\k + 1}}(x^{(\k)}_{j}).
	\end{equation*}
	On one hand, when $|\overline x - x^{(0)}_{j}|\ge r_{j}$ the set 
	\begin{equation*}
		\Big \{ (x^{(1)}, \ldots, x^{(\k)},t) : (x^{(0)}, x^{(1)}, \ldots, x^{(\k)},t) \in \overline 
		\Q_{j}^{m} \Big \} \quad \text{is empty}.
	\end{equation*}
	On the other hand, when $|\overline x - x^{(0)}_{j}| < r_{j}$ we have that 
	\begin{align*}
		\Big \{ (x^{(1)}, \ldots, x^{(\k)},t) &: (\overline x, x^{(1)}, \ldots, x^{(\k)},t) \in \overline 
		\Q_{j}^{m} \Big \} \\
		&\supset  \, \widetilde \Q_{j} := \, 
		(t_{j}, t_{j} + mr^{2}_{j}] \times B_{2r_{j}^{3}}(x^{(1)}_{j}) \times \ldots 
		\times B_{2 \sum_{i=0}^{\k-1} m^{i}r_{j}^{2\k + 1}}(x^{(\k)}_{j}).
	\end{align*}
	Based on these last observations, we have that 
	\begin{align*}
		\Big | \Big \{ (x^{(1)}, \ldots, x^{(\k)},t) &: (x^{(0)}, x^{(1)}, \ldots, x^{(\k)},t) \in \bigcup \limits_{j}  \overline 
		\Q_{j}^{m} \Big \} \Big | \ge  
		\Big |  \bigcup \limits_{j: |\overline x - x^{(0)}_{j}|< r_{j} }\widetilde \Q_{j}
		\Big | .
	\end{align*}
	Now, thanks to Fubini's Theorem and Lemma \ref{leak1} we have 
	\begin{align*}
		\Big | \Big \{ (x^{(1)}, \ldots, x^{(\k)},t) &: (x^{(0)}, x^{(1)}, \ldots, x^{(\k)},t) \in \bigcup \limits_{j}  \overline 
		\Q_{j}^{m} \Big \} \Big | \\
		&\ge  \frac{m}{m+1}
		\Big |  \bigcup \limits_{j: |\overline x - x^{(0)}_{j}|< r_{j} } 
		(t_{j} - r^{2}_{j}, 0] \times B_{2r_{j}^{3}}(x^{(1)}_{j}) \times \ldots 
		\times B_{2 \sum_{i=0}^{\k-1} m^{i}r_{j}^{2\k + 1}}(x^{(\k)}_{j})
		\Big | \\
		&\ge \frac{m}{m+1}
		\Big |  \bigcup \limits_{j: |\overline x - x^{(0)}_{j}|< r_{j} } 
		(t_{j} - r^{2}_{j}, 0] \times B_{r_{j}^{3}}(x^{(1)}_{j}) \times \ldots 
		\times B_{r_{j}^{2\k + 1}}(x^{(\k)}_{j})
		\Big | \\
		&=  \frac{m}{m+1}
		\Big | \Big \{ (x^{(1)}, \ldots, x^{(\k)},t) : (\overline x, x^{(1)}, \ldots, x^{(\k)},t) \in \bigcup \limits_{j}
		\Q_{j} \Big \} \Big |.
	\end{align*}
	Combining all of the above results, the proof is complete. 
\end{proof}

\textit{Proof of Theorem \ref{ink-spot1}.} Let $Q$ be the collection of all cylinders $\Q \subset \Q_{1}$ 
such that $|\Q \cap E| \ge (1 - \mu) |\Q|$. Let $G := \cup_{\Q \in Q}\Q$. By construction, the sets $E$ and $G$ satisfy the assumptions of Lemma \ref{ink-spot2}. Therefore $(1 - c_{\rm is} \mu)|G| \ge |E|$. Combining the assumptions of the theorem with Lemma \ref{leak2} we conclude the proof. $\hfill \square$

\bigskip

\textit{Proof of Corollary \ref{ink-spot-cor}.} The condition $|E| \le (1-\d)|\Q_{1}|$ is implied by the second assumption when $r_{0} < 1$. Moreover, the result is trivial when $r_{0} \ge 1$ choosing $C$ sufficiently large.
Let $Q$ be the collection of all cylinders $\Q\subset \Q_{1}$ such that $|\Q \cap E| \ge (1 - \mu) |\Q|$. Let 
$G:= \cup_{\Q \in Q}\overline \Q^{m}$. From Theorem \ref{ink-spot1} we have that 
$|E| \le \frac{m}{m+1}(1 - c \mu) |G|$. Moreover, our assumptions tell us $G \subset F$. In order to conclude the proof is sufficient to estimate the measure $G \setminus \Q_{1}$ by considering that each
cylinder $\Q= \Q_{r} (x,t) \subset \Q_{1}$ has radius bounded by $r_{0}$ (see \cite[Corollary 10.2]{IS-weak}). 
$\hfill \square$

	\setcounter{theorem}{0}
	\counterwithin{theorem}{section}
	\counterwithin{equation}{section}
\section{Stacked cylinders}
\label{stacked}
For the sake of completeness, we here state the stacking cylinders lemma for our operator $\L$. Such a result is used when applying the Ink-Spots Theorem in the proof of the weak Harnack inequality, Theorem \ref{weak-harnack}.
\begin{lemma}
	\label{stackedcylinders}
	Let $\omega<\frac{1}{\sqrt{2\kappa+1}}$ and $\r=\left((3\kappa+1)\omega\right)^\frac{1}{2\kappa+1}$. We 
	consider any non-empty cylinder $\Q_r(z_0)\subset\Q_-$ and we set $T_k=\sum_{j=1}^k(2^kr)^2$. Let 
	$N\geq 1$ such that $T_N\leq-t_0<T_{N+1}$ and let
	\begin{align*}
		\Q_r[k]&:=\Q_{2^k r}(z_k),\quad k=1,\ldots,N\\
		\Q_{R_{N+1}}[N+1]&:=\Q_{R_{N+1}}(z_{N+1}),
	\end{align*}
	where $z_k=z_0\circ (0,\ldots,0,T_k)$ and $R=|t_0+T_N|^{\frac{1}{2}}$, $R_{N+1}=\max(R,\r)$, and
	\begin{equation*}
		z_{N+1}=\left\{ 
			\begin{array}{ll} 
				z_N\circ(0,\ldots,0,R^2),\quad &\textit{if $  R\geq \r$}\\
				(0,0),\quad &\textit{if $ R<\r$}
			\end{array} \right.
	\end{equation*}
	These cylinders satisfy
	\begin{eqnarray*}
		\Q_+\subset \Q_{R_{N+1}}[N+1],\qquad \cup_{k=1}^{N+1}\Q_r[k]\subset(-1,0]\times B_2,\qquad 
		\tilde{\Q}[N]\subset\Q_r[N],
	\end{eqnarray*}
	where $\tilde{\Q}[N]=\Q_{R_{N+1}/2}\left(z_{N+1}\circ (0,\ldots,0,-R_{N+1}^2)\right)$.
\end{lemma}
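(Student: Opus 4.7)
The plan is to prove the three inclusions by direct computation, using only the group law \eqref{grouplaw}, the semigroup identity $E(s)E(t)=E(s+t)$, and the nilpotent block structure \eqref{B} of $B$. The first step is to identify the centers: since $z_k=z_0\circ(0,\dots,0,T_k)$ and $(x,t)\circ(0,\tau)=(E(\tau)x,t+\tau)$, one finds inductively $z_k=(E(T_k)x_0,t_0+T_k)$. In particular, in the case $R\ge\rho$ the choice $R^2=-t_0-T_N$ gives $z_{N+1}=(E(-t_0)x_0,0)$, so the time coordinate of $z_{N+1}$ is $0$ in both cases and the top of $\Q_{R_{N+1}}[N+1]$ sits exactly at $t=0$.

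Next I would verify the time-range part of Claim 2. By the telescoping $T_k-T_{k-1}=(2^k r)^2$, the past cylinder $\Q_r[k]=\Q_{2^k r}(z_k)$ has time interval $(t_0+T_{k-1},t_0+T_k]$, so the union $\bigcup_{k=1}^{N}\Q_r[k]$ has time range $(t_0,t_0+T_N]$. By the stopping rule $T_N\le -t_0$, this lies in $(-1,0]$ since $z_0\in\Q_-$ forces $t_0>-1$. The cylinder $\Q_{R_{N+1}}[N+1]$ contributes the time interval $(-R_{N+1}^2,0]$, which bridges $t_0+T_N$ to $0$ in the case $R\ge\rho$, and in the case $R<\rho$ is contained in $(-\rho^2,0]\subset(-1,0]$ provided $\rho\le 1$, which follows from $\omega<1/\sqrt{2\kappa+1}$ and the definition of $\rho$.

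The main obstacle is the spatial bound $\bigcup_{k=1}^{N+1}\Q_r[k]\subset(\cdot)\times B_2$. Here I would exploit that $B$ is nilpotent of step $\kappa+1$ (by \eqref{B}), so that $E(s)=\sum_{j=0}^{\kappa}\frac{(-sB)^j}{j!}$ is polynomial. Applied blockwise, $E(T_k)x_0$ has $\ell$-th block controlled by $\sum_{j=0}^{\ell}|T_k|^j|x_0^{(\ell-j)}|$. Since $z_0\in\Q_-$ gives $|x_0^{(j)}|\le\omega^{2j+1}$ and since $|T_k|\le -t_0<1$, the $B$-homogeneity of $\delta_r$ (namely $\delta_r B\delta_{1/r}=r^2 B$) implies the $\ell$-th block of $E(T_k)x_0$ is bounded by $C_\kappa\omega^{2\ell+1}$. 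A point $(x,t)\in\Q_r[k]=z_k\circ\delta_{2^kr}(\Q_1)$ then satisfies, block by block, $|x^{(\ell)}|\le C_\kappa\omega^{2\ell+1}+C_\kappa(2^kr)^{2\ell+1}$ after absorbing the polynomial cross-terms from $E(\tau)x_k$ into another constant depending only on $\kappa$ and $B$. Using $2^kr\le\sqrt{|t_0|+T_N}\le 1$ and the choice $\omega<1/\sqrt{2\kappa+1}$ (together with the definition of $\rho$ for the $(N+1)$-th cylinder), these bounds are all $\le 2$. This is the only step requiring genuine work; the factor $(3\kappa+1)$ in the definition of $\rho$ is chosen precisely so that the corresponding bound holds with margin.

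Finally, for Claim 1 and Claim 3 I would argue algebraically. For Claim 1, if $R<\rho$ then $\Q_{R_{N+1}}[N+1]=\delta_\rho(\Q_1)$ and $\rho^{2j+1}\ge\omega^{2j+1}$ for all $j$ by $\rho\ge\omega$, so $\Q_+\subset\delta_\rho(\Q_1)$ directly. If $R\ge\rho$, then $\Q_{R_{N+1}}[N+1]=(E(-t_0)x_0,0)\circ\delta_R(\Q_1)$; the inclusion $\Q_+\subset\Q_{R_{N+1}}[N+1]$ reduces, via the computation $z_{N+1}^{-1}\circ z=(x-E(t+t_0^{-1})\cdot\ldots,t)$ and the same polynomial expansion of $E$ as above, to the blockwise inequalities $\omega^{2j+1}+C_\kappa\omega^{2j+1}\le R^{2j+1}$, which hold because $R\ge\rho=((3\kappa+1)\omega)^{1/(2\kappa+1)}$. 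Claim 3 is a direct group-law check: $\tilde\Q[N]$ is the cylinder of radius $R_{N+1}/2$ anchored at the point $z_{N+1}\circ(0,\dots,0,-R_{N+1}^2)$, and comparing time and spatial ranges with those of $\Q_r[N]=\Q_{2^N r}(z_N)$ gives the inclusion, since $R_{N+1}\le 2^N r$ by the stopping rule $T_{N+1}>-t_0$ (hence $(2^{N+1}r)^2>R^2$, so $R\le 2^N r\cdot\sqrt{3}\le 2\cdot 2^N r$).
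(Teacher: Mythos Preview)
Your approach --- direct verification of the three inclusions via the group law \eqref{grouplaw}, the nilpotent polynomial expansion of $E(s)$, and blockwise estimates --- is exactly what the paper does: it gives no self-contained argument but defers to \cite[Appendix~C]{GI}, noting that only the composition law and the dilations \eqref{gdil} change. A few inequalities in your sketch are imprecise (the factor $\sqrt{3}$ in Claim~3 is spurious: from $T_{N+1}>-t_0$ one obtains $R<2^{N+1}r$, hence $R_{N+1}/2<2^{N}r$, which is the bound actually needed; and the expression for $z_{N+1}^{-1}\circ z$ is garbled), but the plan is sound and matches the intended proof.
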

\begin{proof}
As our derivation of the previous lemma follows very closely the one contained in \cite[Appendix C]{GI}, we here do not write explicitly the proof. Indeed, the proof of the result is merely geometric and the main difference with \cite{GI} lies in the fact that in our case we exploit the more general composition law and dilations defined in \eqref{grouplaw} and in \eqref{gdil}, respectively. This explains why here the constants $\omega$ and $\r$ differ from the ones in \cite{GI}.  
\end{proof}


\begin{thebibliography}{100}
	\bibliographystyle{plain}
	
\bibitem{AEP}
F.~Anceschi, M.~Eleuteri and S.~ Polidoro
\newblock {A} geometric statement of the {H}arnack inequality for a degenerate {K}olmogorov equation with rough coefficients,
\newblock {\em Comm. Cont. Math.} (21): 1 -- 17, 2018.

\bibitem{APsurvey}
F.~Anceschi and S.~ Polidoro.
\newblock A survey on the classical theory for {K}olmogorov equation.
\newblock {\em Le Matematiche}, LXXV(Issue I) :221--258, 2020.

\bibitem{APR}
F.~Anceschi, S.~ Polidoro, and M.~A. Ragusa.
\newblock {M}oser's estimates for degenerate {K}olmogorov equations with
  non-negative divergence lower order coefficients.
\newblock {\em Nonlinear Analysis} :1--19, 2019.

\bibitem{AM}
S.~Armstrong and J.C.~Mourrat.
\newblock {V}ariational methods for the kinetic {F}okker-{P}lanck equation.
\newblock {\em arXiv:1902.04037}, preprint, 2019.

\bibitem{Bony69}
J.M.~Bony.
\newblock Principe du maximum, in\'{e}galite de {H}arnack et unicit\'{e} du
  probl\`eme de {C}auchy pour les op\'{e}rateurs elliptiques
  d\'{e}g\'{e}n\'{e}r\'{e}s.
\newblock {\em Ann. Inst. Fourier (Grenoble)}, 19(fasc. 1):277--304 xii, 1969.

\bibitem{CPP}
C.~Cinti, A.~Pascucci, and S.~Polidoro.
\newblock Pointwise estimates for a class of non-homogeneous {K}olmogorov
  equations.
\newblock {\em Math. Ann.}, 340(2):237--264, 2008.

\bibitem{dibenedetto}
E.~Di~Benedetto and N.S.~Trudinger,
\newblock {H}arnack inequalities for quasi-minima of variational integrals.
\newblock {\em Annales de l'I.H.P. Analyse non lin\'eaire}, 1(4):295--308, 1984.
  
\bibitem{FisherTas}
B.~Fisher and K.~Ta\c s
\newblock {T}he convolution of functions and distributions
\newblock {\em J. Math. Anal. Appl.} 306, 364–374, 2005.

\bibitem{FO}
G.B.~Folland.
\newblock Sub-elliptic estimates and function spaces on nilpotent lie groups.
\newblock {\em Ark. Mat.}, 13:161--207, 1975.

\bibitem{GT}
D.~Gilbarg and N.S.~Trudinger.
\newblock {\em Elliptic partial differential equations of second order}.
\newblock Springer-Verlag, Berlin-New York, 1977.
\newblock Grundlehren der Mathematischen Wissenschaften, Vol. 224.

\bibitem{GIMV}
F.~Golse, C.~Imbert and C.~Mouhot, and Alexis~F. Vasseur.
\newblock Harnack inequality for kinetic {F}okker-{P}lanck equations with rough
  coefficients and application to the {L}andau equation.
\newblock {\em Ann. Sc. Norm. Super. Pisa Cl. Sci. (5)}, 19(1):253--295, 2019.

\bibitem{guerand}
J.~Guerand.
\newblock {Q}uantitative regularity for parabolic {D}e {G}iorgi classes, 2020.
\newblock {\em arXiv:2103.09646}.

\bibitem{GI}
J.~Guerand and C.~Imbert.
\newblock {L}og-transform and the weak {H}arnack inequality for kinetic
  {F}okker-{P}lanck equations.
\newblock {\em arXiv: 2102.04105}, preprint, 2021.

\bibitem{GM}
J.~Guerand and C.~Mouhot.
\newblock Quantitative de giorgi methods in kinetic theory.
\newblock {\em arXiv:2103.09646}, preprint, 2021.

\bibitem{H}
L.~H{\"o}rmander.
\newblock Hypoelliptic second order differential equations.
\newblock {\em Acta Math.}, 119:147--171, 1967.

\bibitem{Ignatova}
M.~Ignatova
\newblock {O}n the continuity of solutions to advection-diusion
equations with slightly super-critical divergence-free drifts.
\newblock {\em Adv. Nonlinear Anal.} 3 (2):81–86 (2014).

\bibitem{IS-fn}
C.~Imbert and L~Silvestre.
\newblock {A}n {I}ntroduction to {F}ully {N}onlinear {P}arabolic {E}quations.
\newblock {\em Lecture Notes in Mathematics, Springer, Cham.}, 2086 in Boucksom
  S., Eyssidieux P., Guedj V. (eds) An Introduction to the Kähler-Ricci Flow,
  2013.

\bibitem{IS}
C.~Imbert and L~Silvestre.
\newblock {G}lobal regularity estimates for the boltzmann equation without
  cut-off.
\newblock {\em Journal of the American Mathematical Society. Accepted for
  publication.}, arXiv:1909.12729v1, 2019.

\bibitem{IS-weak}
C.~Imbert and L~Silvestre.
\newblock {T}he weak {H}arnack inequality for the {B}oltzmann equation without
  cut-off.
\newblock {\em Journal of the European Mathematical Society}, 22(2):507--592,
  2020.

\bibitem{SK}
D.~Kinderlehrer and G.~Stampacchia.
\newblock An introduction to variational inequalities and their applications.
\newblock {\em Academic Press}, 1980.

\bibitem{Kruz1}
S.N.~Kr\u uzkov
\newblock {A} priori bounds for generalized solutions of second-order elliptic and
  parabolic equations. 
  \newblock {\em Dokl. Akad. Nauk SSSR}, 150: 748--751, 1963.

\bibitem{Kruz2}
S.N.~Kr\u uzkov
\newblock {A} priori bounds for generalized solutions of second-order elliptic and
  parabolic equations. 
  \newblock {\em Mat. Sb. (N.S.)}, 65 (107): 522--570, 1964.

\bibitem{KrylovSafonov}
N.V.~Krylov and M.V.~Safonov.
\newblock A certain property of solutions of parabolic equations with
  measurable coefficients.
\newblock {\em Izv. Akad. Nauk SSSR, Ser. Mat.}, 44:161--175, 1980.

\bibitem{LP}
E.~Lanconelli and S.~Polidoro
\newblock On a class of hypoelliptic evolution operators.
\newblock {\em Rend. Sem. Mat. Univ. Politec. Torino}, 52:29--63, 1994.

\bibitem{lieberman}
G.M.~Lieberman
\newblock {\em Second Order Parabolic Differential Equations}
\newblock World Scientific, 1996.

\bibitem{LN}
M.~Litsgard and K.~Nystr\"om
\newblock {\em The Dirichlet problem for Kolmogorov-Fokker-Planck type equations with rough coefficients}
\newblock preprint ArXiV: 2012.11410 (2021).

\bibitem{Moffatt}
H.K. Moffatt, 
\newblock {M}agnetostrophic turbulence and the geodynamo.
\newblock {\em IUTAM Symposium on Computational Physics and New Perspectives inTurbulence, IUTAM Bookser.}, 
vol. 4, Springer, Dordrecht, 2008, pp. 339–346.

\bibitem{M3}
J.~Moser.
\newblock {A} new technique for the construction of solutions of nonlinear
  differential equations.
\newblock {\em Proc. Natl. Acad. Sci. USA}, 47(11):1824--1831, 1961.

\bibitem{M1}
J.~Moser.
\newblock {A} {H}arnack inequality for parabolic differential equations.
\newblock {\em Comm. Pure Appl. Math.}, 17:101--134, 1964.

\bibitem{M4}
J.~Moser.
\newblock {A} rapidly convergent iteration method and non-linear partial
  differential equations - I.
\newblock {\em Annali della Scuola Normale Superiore di Pisa - Classe di
  Scienze}, Ser. 3, 20(2):265--315, 1966.
  
 \bibitem{div}
G.~Koch, N.~Nadirashvili, G.A.~Seregin and V.~Šverák.
\newblock Liouville theorems for the Navier–Stokes equations and applications.
\newblock {\em Acta Mathematica}, 203:83 -- 105, 2009.
 
 \bibitem{NazarovUraltzeva} 
 Nazarov A.I.~, Ural’tseva N.N.
\newblock {T}he {H}arnack inequality and related properties of solutions of elliptic and parabolic equations with divergence-free lower-order coefficients
\newblock {\em Algebra i Analiz}, 23 (2011): 136-168

\bibitem{PP}
A.~Pascucci and S.~Polidoro.
\newblock The {M}oser's iterative method for a class of ultraparabolic
  equations.
\newblock {\em Commun. Contemp. Math.}, 6(3):395--417, 2004.

\bibitem{S-div}
G.~Seregin, L.~Silvestre, V.~\u Sverák and A.~Zlato\u s,
\newblock {O}n divergence-free drifts.
\newblock {\em Journal of Differential Equations}, Volume 252, Issue 1
: 505-540, (2012)
https://doi.org/10.1016/j.jde.2011.08.039.

\bibitem{Trudinger}
N.S.~Trudinger.
\newblock Pointwise estimates and quasilinear parabolic equations.
\newblock {\em Communications on Pure and Applied Mathematics}, 21:205--226,
  1968.
  
  \bibitem{vlasov}
  H.D.~Victory, J.
  \newblock {O}n the {E}xistence of {G}lobal {W}eak {S}olutions for {V}lasov-{P}oisson-{F}okker-{P}lanck {S}ystems
  \newblock {\em J. Math. An. App.} 160: 525 -- 555 (1991)


\bibitem{GWang}
G.~Wang.
\newblock Harnack inequalities for functions in de giorgi parabolic class.
\newblock In Shiing-shen Chern, editor, {\em Partial Differential Equations},
  pages 182--201, Berlin, Heidelberg, 1988. Springer Berlin Heidelberg.

\bibitem{WZ4}
W.~Wang and L.~Zhang.
\newblock The {$C^\alpha$} regularity of a class of non-homogeneous
  ultraparabolic equations.
\newblock {\em Sci. China Ser. A}, 52(8):1589--1606, 2009.

\bibitem{WZ3}
W.~Wang and L.~Zhang.
\newblock The {$C^\alpha$} regularity of weak solutions of ultraparabolic
  equations.
\newblock {\em Discrete Contin. Dyn. Syst.}, 29(3):1261--1275, 2011.

\bibitem{WZ-preprint}
W.~Wang and L.~Zhang.
\newblock ${C}^{\alpha}$ regularity of weak solutions of non-homogenous
  ultraparabolic equations with drift terms.
\newblock {\em arXiv:1704.05323}, preprint, 2017.


\end{thebibliography}
\end{document}